\numberwithin{equation}{section}
\numberwithin{equation}{section}
\newtheorem{thm}{Theorem}[section]
\newtheorem{lem}[thm]{Lemma}
\newtheorem{claim}{Claim}[section]
\newtheorem{theorem}{Theorem}[section]
\newtheorem{prop}[theorem]{Proposition}
\theoremstyle{definition}
\newtheorem{defn}[theorem]{Definition}
\newtheorem{rem}[theorem]{Remark}
\begin{document}
\title{ Modified scattering  for the one-dimensional Schr\"odinger equation with a subcritical dissipative nonlinearity}

\author{Xuan Liu\footnote{E-mail adress: lxmath@zju.edu.cn}, Ting Zhang\footnote{E-mail adress: zhangting79@zju.edu.cn}\\
	\small{School of Mathematical Sciences, Zhejiang University,
		Hangzhou 310027, China}}
\date{}

\maketitle


\begin{abstract}
 We study the asymptotic behavior in time of solutions to the one dimensional nonlinear Schr\"odinger equation with a subcritical  dissipative nonlinearity $\lambda |u|^\alpha u$, where $0<\alpha<2$, and $\lambda $ is a complex constant satisfying $\text{Im} \lambda >\frac{\alpha |\text{Re} \lambda |}{2\sqrt{ \alpha +1}}$. For arbitrary large initial data,  we present the  uniform time decay estimates  when $4/3<\alpha <2$, and the large time asymptotics of the solution when $\frac{7+\sqrt{145}}{12}<\alpha <2$.  The proof is based on the vector fields method and   a semiclassical analysis method. 
	\\ \textbf{Keywords:} Schr\"odinger equation, Decay estimates, Modified scattering, Semiclassical Analysis. 
\end{abstract}
\section{Introduction}
\textbf{Background and historical notes.} We consider the large time behavior of solutions to the  one-dimensional Schr\"{o}dinger equation
\begin{eqnarray}\label{NLS}
	\begin{cases}
		i\partial_t  u+\frac{1}{2}\partial_x^2u+\lambda |u|^\alpha u=0,\ t>0,\ x\in \mathbb{R},\\
		u(0,x)=u_0(x),\ x\in \mathbb{R},
	\end{cases}
\end{eqnarray}
where $\alpha >0$, $\lambda \in\mathbb{C}$, and  $u:\mathbb{R}\times\mathbb{R}\rightarrow\mathbb{C}$ is a complex-valued function.  From the physical point of view, (\ref{NLS}) is said to be a governing equation of the light traveling through optical fibers, in which  $|u(t,x)|$ describes the amplitude of the electric field,  $t$ denotes the position along the fiber and  $x$ stands for the temporal parameter expressing a form of pulse. As for the nonlinear coefficient,  $\text{Re}\lambda  $ denotes the magnitude of the nonlinear Kerr effect and  $\text{Im} \lambda $ implies the magnitude of dissipation due to nonlinear Ohm's law (see e.g. \cite{GP}). The equation (\ref{NLS}) is also  a particular case of the  more general complex Ginzburg-Landau equation on $\mathbb{R}^N$
\begin{eqnarray}
	\partial_{t} u=e^{i \theta} \Delta u+\zeta|u|^{\alpha} u\notag
\end{eqnarray}
where $|\theta| \leq \frac{\pi}{2}$ and $\zeta \in \mathbb{C}$, which   is a generic modulation equation that describes the nonlinear evolution of patterns at near-critical conditions. See for instance \cite{Cr,Mie,Ste}.

There are many papers that studied the global well-posedness problem, decay and the asymptotic behavior of the solution (see \cite{CaHan,CaJFA,DZ03,Hayashi,Kita,Kita2,Ozawa,Zhang} and references therein).   We use the following classification with respect to the value $\alpha $: the values $\alpha >2$ we call the super-critical in the scattering problem, the value $\alpha =2$ is the critical one and $0<\alpha <2$ we refer to  the sub-critical. 

Let us  recall some known large time asymptotics of (\ref{NLS}) with  $\lambda \in \mathbb{R}$. Concerning  the super-critical case $\alpha >2$, it is well-known that the solution $u(t)$ behaves like a free solution $\exp \left((it/2)\partial_  x^2\right)\phi$ for $t$ sufficiently large (\cite{Ginibre0, Strauss2, Tsutsumi}). The strategy for this free asymptotic profile  largely relies on the rapid decay of the nonlinearity. More precisely, since $\int_{1}^{\infty }|u(t)|^\alpha  \mathrm{d}t\approx\int_{1}^{\infty }t^{-\alpha /2} \mathrm{d}t<\infty $ by expecting that $u(t)$ decays like a free solution, the nonlinearity can be regarded as negligible in the long time  dynamics. As for  the sub-critical and critical case  $0<\alpha \le2$, the situation changes.  The nonexistence of usual scattering states was obtained  \cite{Ba,Strauss} by making use of the time decay estimate of solutions  obtained from  pseudo-conformal conservation law.   In the case  $\alpha =2$, Ozawa \cite{Ozawa} constructed modified wave operators to the equation (\ref{NLS}) for small scattering states, and Hayashi-Naumkin \cite{Hayashi} proved the time decay and the large time asymptotics of  $u(t)$ for small initial data. According to their results,   the small solution  $u(t)$ asymptotically tends to a modified free solution. More precisely, there are  $\mathbb{C}$-valued $W(x)\in L^\infty \cap L^2$ and  $\mathbb{R}$-valued  $\Phi(x)\in L^\infty $ such that as  $t\rightarrow \infty $ 
	\begin{equation}
	u(t,x)= \frac{1}{\sqrt {it}}W(\frac{x}{t})\exp \left(i\frac{x^2}{2t}+i\lambda |W(\frac{x}{t})|^2\log t+i\Phi(\frac{x}{t})\right)+O_{L^\infty _x}(t^{-1/2})\notag
\end{equation}
While the subcritical case  $0<\alpha <2$ seems to be completely open. To our knowledge, there are no results about the precise behavior or any kind of modified scattering of  $u$ for large time.

Many works have also dealt with the complex coefficient case. We can only expect the large time asymptotics of (\ref{NLS}) for the case  $\text{Im} \lambda >0$, since it is proved in \cite{Ca2} that, under the assumptions   $\text{Im} \lambda <0,\ 0<\alpha <\infty $, there exists a class of  blowup solutions to (\ref{NLS}). On the other hand,  it is easy to see  that 
\begin{equation}
	\|u(t)\|_{L^2}^2+2\text{Im} \lambda \int _0^t \|u(\tau)\|_{L^{\alpha +2}}^{\alpha +2}d\tau =\|u_0\|_{L^2}^2,\label{141}
\end{equation}
which suggests a dissipative structure for  $\text{Im} \lambda >0$.   In what follows,  we  concentrate our attention on  the sub-critical and critical  case: $0<\alpha \le2$  (For the  super-critical case, we refer to the aforementioned  works  \cite{Ginibre0, Strauss2, Tsutsumi}, where a  super-critical  real $\lambda $ were studied, and  the ideals are still applicable to a complex $\lambda $ with  $\text{Im} \lambda >0$).   The  critical case $\alpha =2$ has been studied in \cite{CPDE}, in which the positivity of $\text{Im} \lambda $ visibly affects the decay rate of $\left\|u(t,x)\right\|_{L^\infty _x}$ and, actually, it decays like $(t\log t)^{-1/2}$.  Since the nontrivial free solution only decays like  $O(t^{-1/2})$, this gain of additional logarithmic time decay reflects a dissipative character.  This result is then extended in \cite{Kita,Kita2}  to the subcritical  case. For   $\alpha <2$ is sufficiently close to  $2$,  Kita-Shimomura  \cite{Kita} established the time decay estimates 
\begin{equation}
		\|u(t,x)\|_{L^\infty _x}\lesssim t^{-1/\alpha },\qquad \text{for }t\ge1 \label{dsa}
\end{equation}
and the asymptotic formula of the solutions. In addition,  it is proved  in \cite{Kita2} that, 
 under the large dissipative assumption 
\begin{eqnarray}\label{lambda1}
	\lambda _2\ge\frac{\alpha\left|\lambda _1\right| }{2\sqrt{ \alpha +1}},\qquad \lambda =\lambda _1+i\lambda _2,
\end{eqnarray} 
all solutions with initial value in  $H^1(\mathbb{R})\cap L^2 (\mathbb{R}, |x|^2dx)$ satisfy the  $L^\infty $ decay estimate (\ref{dsa})  when $\frac{1+\sqrt {33}}{4}<\alpha <2$,  and  possess a  large time asymptotic  state when  $\frac{9+\sqrt {177}}{12}<\alpha <2$.  
The strategy used  in \cite{Kita, Kita2,CPDE} is to apply  the operator  $\mathcal{F} U(-t)$ to the equation (\ref{NLS}), where  $U(t)=e^{it/2 \Delta }$ is the Schr\"odinger operator. Using the  factorization technique of the Schr\"odinger operator  $U(t)$, they obtain an ODE  for  $\mathcal{F} U(-t)u(t)$
\begin{equation}
	i\partial_t  \mathcal{F} U(-t)u(t)=\lambda t^{-\alpha /2}|\mathcal{F} U(-t)u(t)|^\alpha \mathcal{F} U(-t)u(t)+O_{L^\infty_x } (t^{-\alpha /2-\mu}),\ 0<\mu<1/4.,\label{191}
\end{equation}
from which, they deduce the large time asymptotics of  $\mathcal{F} U(-t)u(t,x)$ and then in the solution  $u(t,x)$.


The present work aims  to complete the previous results on the time  asymptotic behavior of the solutions obtained in \cite{Kita,Kita2,CPDE}.  More precisely, for arbitrary large initial data,  we present the  uniform time decay estimates  when $4/3<\alpha <2$, and the large time asymptotics of the solution when $\frac{7+\sqrt{145}}{12}<\alpha <2$.
\vspace{0.5cm} 

\noindent \textbf{Notation and function spaces.} To state our result precisely, we now give some notations. Throughout the paper, $F(\xi)$ denotes the  second order constant coefficients classical elliptic symbol, which has  an expansion
\begin{eqnarray}
	\label{1.3}F(\xi)=c_2\xi^2+c_1\xi+c_0
\end{eqnarray}
with $c_2>0,\ c_1,\ c_0\in \mathbb{R}$. We introduce the notations $D_t=\frac{\partial_t}{i}$, $D=\frac{\partial_x}{i}$ and  the vector field 
\begin{equation}
	\mathcal{L} =x+tF'(D).\label{5161}
\end{equation}
For  $\psi \in L^1(\mathbb{R})$,  $\mathcal{F} \psi$  is represented as  $\mathcal{F} \psi (\xi)=(2\pi)^{-1/2}\int_{ \mathbb{R}}\psi(x) e^{-ix\xi}dx$.   $[A,B]$ denotes the commutator  $AB-BA$. Different positive constants we denote by the same letter $C$.   We introduce some function spaces.  $\mathcal {S} (\mathbb{R}^2) $ denotes the usual two-dimensional Schwarz space.     $L^p=L^p(\mathbb{R})$  denotes the usual Lebesgue space with the norm  $\|\phi\|_{L^p}=(\int _{\mathbb{R}}|\phi(x)|^p dx)^{1/p}$ if  $1\le p<\infty $ and  $\|\phi\|_{L^\infty }=\text{ess. sup }\left\{|\phi(x)|;x\in \mathbb{R}\right\} $. The weighted Sobolev space is defined by  $	H^{0,m}=L^2(\mathbb{R})\cap L^2(\mathbb{R},|x|^{2m}dx)$. 
\vspace{0.5cm} 

\noindent \textbf{Main results.} We are now ready to state the main result.
\begin{thm}\label{T1.1}
	Assume  that $u_0\in H^{0,1}$, $0<\alpha <2$,  $\lambda $ satisfies the condition (\ref{lambda1}) and $\mathcal{L} $ is the vector filed defined in (\ref{5161}). Then there exists a unique global solution $u\in C\left([0,\infty  ),\ L^2 \right)$ to the Cauchy problem 
	\begin{equation}\left\{
		\begin{array}{ll}
			(D_t -F(D))u=\lambda|u|^\alpha u,&t>0,x\in\mathbb{R}
			\\ u(x,0)= u_0(x),
		\end{array}\label{1.1}
		\right.
	\end{equation}
satisfying  
	\begin{eqnarray}\label{3192}
		\left\|u(t,x)\right\|_{L_x^2}+\left\|\mathcal{L}u(t,x)\right\|_{L_x^2}\le C\left\|u_0\right\|_{ H^{0,1}},\ t\ge0 
	\end{eqnarray}
	and 
	\begin{equation}
		\label{ldecay}
		\|u(t,x)\|_{L^\infty _x}\le C\|u_0\|_{H^{0,1}}t^{-1/2},\ t>1.
	\end{equation}
	Furthermore, if $u_0\in H^{0,2}$ and $\alpha \ge1$,  then
	\begin{eqnarray}\label{491}
		\left\|\mathcal{L} ^2u(t,x)\right\|_{L_x^2}\le C(\left\|u_0\right\|_{H^{0,2}}+\left\|u_0\right\|_{H^{0,2}}^{2\alpha +1})t^{2-\alpha }, \ t\ge0.
	\end{eqnarray}
\end{thm}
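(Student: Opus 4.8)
The plan is to propagate the weighted bound by applying $\mathcal{L}^2$ to \eqref{1.1} and running a weighted $L^2$ energy estimate, feeding in the decay \eqref{ldecay} and the $H^{0,1}$-bound \eqref{3192} that are already available. I would first record that $\mathcal{L}$ commutes with the linear part, $[D_t-F(D),\mathcal{L}]=0$ (the very fact that makes \eqref{3192} close), so that $\mathcal{L}^2u$ solves $(D_t-F(D))\mathcal{L}^2u=\lambda\,\mathcal{L}^2(|u|^\alpha u)$; the self-adjointness of $F(D)$ then gives
\begin{equation}
\frac12\frac{d}{dt}\|\mathcal{L}^2u\|_{L^2}^2=\mathrm{Re}\,\big\langle i\lambda\,\mathcal{L}^2(|u|^\alpha u),\,\mathcal{L}^2u\big\rangle .\notag
\end{equation}
The algebraic heart of the argument will be a chain rule for $\mathcal{L}$ acting on the gauge-invariant nonlinearity $g(u)=|u|^\alpha u$. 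Writing $\mathcal{L}=m+p\partial_x$ with $m=x+c_1t$ and $p=-2ic_2t$, the charge-one Euler relation $u\,g_u-\bar u\,g_{\bar u}=g$ yields $\mathcal{L}g=g_u\,\mathcal{L}u-g_{\bar u}\,\overline{\mathcal{L}u}$, with no surviving $m$-weight. Iterating once more, and using $\mathcal{L}(\overline{\mathcal{L}u})=2m\,\overline{\mathcal{L}u}-\overline{\mathcal{L}^2u}$, I expect the linearly growing factors $m$ produced at each step to cancel in pairs, leaving
\begin{equation}
\mathcal{L}^2 g=g_u\,\mathcal{L}^2u+g_{\bar u}\,\overline{\mathcal{L}^2u}+\mathcal{N},\qquad \mathcal{N}=g_{uu}(\mathcal{L}u)^2-(g_{u\bar u}+g_{\bar u u})\,\mathcal{L}u\,\overline{\mathcal{L}u}+g_{\bar u\bar u}(\overline{\mathcal{L}u})^2 .\notag
\end{equation}
The second derivatives of $g$ are homogeneous of degree $\alpha-1$, so $|\mathcal{N}|\le C|u|^{\alpha-1}|\mathcal{L}u|^2$; this is exactly where the hypothesis $\alpha\ge1$ is needed, to keep these coefficients bounded.

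I would handle the principal part $g_u\,\mathcal{L}^2u+g_{\bar u}\,\overline{\mathcal{L}^2u}$ exactly as in \eqref{3192}. Pairing it with $\mathcal{L}^2u$ and writing $u=|u|e^{i\phi}$, $\mathcal{L}^2u=|\mathcal{L}^2u|e^{i\chi}$, its contribution is $-\int|u|^\alpha|\mathcal{L}^2u|^2\big[(\tfrac\alpha2+1)\lambda_2+\tfrac\alpha2(\lambda_2\cos2(\phi-\chi)+\lambda_1\sin2(\phi-\chi))\big]\,dx$, whose bracket is $\ge(\tfrac\alpha2+1)\lambda_2-\tfrac\alpha2|\lambda|\ge0$ precisely under the dissipativity condition \eqref{lambda1}. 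Thus the principal part contributes a nonpositive amount, and I am left with
\begin{equation}
\frac{d}{dt}\|\mathcal{L}^2u\|_{L^2}\le C\int|u|^{\alpha-1}|\mathcal{L}u|^2\,|\mathcal{L}^2u|\,dx\le C\|u\|_{L^\infty}^{\alpha-1}\|\mathcal{L}u\|_{L^\infty}\|\mathcal{L}u\|_{L^2}.\notag
\end{equation}

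It then remains to estimate the right-hand side. From \eqref{ldecay}, $\|u\|_{L^\infty}\le C\|u_0\|_{H^{0,1}}t^{-1/2}$, and from \eqref{3192}, $\|\mathcal{L}u\|_{L^2}\le C\|u_0\|_{H^{0,1}}$. The decisive ingredient will be a decay estimate for $\mathcal{L}u$ itself: applying the same semiclassical factorization used for \eqref{ldecay} to $\mathcal{L}u=U(t)\big(x\,U(-t)u\big)$ (where $U(t)=e^{itF(D)}$ and $\mathcal{L}=U(t)\,x\,U(-t)$), combined with the interpolation $\|xw\|_{L^1}\le C\|xw\|_{L^2}^{1/2}\|x^2w\|_{L^2}^{1/2}$ for $w=U(-t)u$, I expect
\begin{equation}
\|\mathcal{L}u\|_{L^\infty}\le C t^{-1/2}\Big(\|\mathcal{L}u\|_{L^2}+\|\mathcal{L}u\|_{L^2}^{1/2}\|\mathcal{L}^2u\|_{L^2}^{1/2}\Big).\notag
\end{equation}
Setting $Y(t)=\|\mathcal{L}^2u\|_{L^2}$ and inserting these bounds produces a differential inequality of the form $Y'\le C t^{-\alpha/2}\big(\|u_0\|_{H^{0,1}}^{\alpha+1}+\|u_0\|_{H^{0,1}}^{\alpha+1/2}Y^{1/2}\big)$. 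Because the nonlinear term carries only $Y^{1/2}$, this is sublinear, and integrating it (with $\int_0^t s^{-\alpha/2}\,ds\approx t^{1-\alpha/2}$ for $\alpha<2$ and $Y(0)=\|x^2u_0\|_{L^2}\le\|u_0\|_{H^{0,2}}$) gives $Y(t)^{1/2}\le C(\|u_0\|_{H^{0,2}}^{1/2}+\|u_0\|_{H^{0,2}}^{\alpha+1/2}t^{1-\alpha/2})$, which is \eqref{491} for $t\ge1$; smaller times are controlled by the local theory.

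The hard part will be precisely the half-power $\|\mathcal{L}^2u\|_{L^2}^{1/2}$ in the $L^\infty$ bound for $\mathcal{L}u$. Had I used the cruder estimate $\|\mathcal{L}u\|_{L^\infty}\lesssim t^{-1/2}(1+\|\mathcal{L}^2u\|_{L^2})$, the resulting inequality $Y'\lesssim t^{-\alpha/2}(1+Y)$ would only give the exponential bound $\exp(Ct^{1-\alpha/2})$, far weaker than \eqref{491}. Two structural features rescue the polynomial rate: the cancellation of the linearly growing weights $x+c_1t$ in the double commutator (so that $\mathcal{N}$ is genuinely quadratic in $\mathcal{L}u$ with a decaying amplitude $|u|^{\alpha-1}$), and the gain of a square root from the $L^2$–$L^2$ interpolation on the Fourier side of $U(-t)u$. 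Verifying the weight cancellation and pinning down the sharp factorized decay estimate for $\mathcal{L}u$ are therefore the two computations that will demand the most care.
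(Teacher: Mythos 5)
Your proposal is correct and, for the weighted bound \eqref{491}, it is essentially the paper's own argument: the paper likewise applies $\mathcal{L}^2$ to \eqref{1.1} using $[D_t-F(D),\mathcal{L}^2]=0$, expands $\mathcal{L}^2(|u|^\alpha u)$ into a principal part plus terms quadratic in $\mathcal{L}u$ with coefficients of size $|u|^{\alpha-1}$ (its identity \eqref{z2}, valid for $\alpha\ge1$; your derivation of the cancellation of the weights $x+c_1t$ via the charge-one Euler relation is just a cleaner way to obtain the same identity), absorbs the principal part by exactly the computation you sketch, whose bracket is nonnegative precisely under \eqref{lambda1} (the paper's \eqref{151}), bounds the remainder by $C\|u\|_{L^\infty}^{\alpha-1}\|\mathcal{L}u\|_{L^\infty}\|\mathcal{L}u\|_{L^2}\|\mathcal{L}^2u\|_{L^2}$, and integrates the same sublinear differential inequality $\frac{d}{dt}\|\mathcal{L}^2u\|_{L^2}^{1/2}\lesssim t^{-\alpha/2}$ to reach the $t^{2-\alpha}$ rate. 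The only methodological difference is how the decay interpolation for $\mathcal{L}u$ is produced: the paper's Lemma~\ref{ll1} gives $\|\mathcal{L}u\|_{L^\infty}\le Ct^{-1/2}\|\mathcal{L}u\|_{L^2}^{1/2}\|\mathcal{L}^2u\|_{L^2}^{1/2}$ by conjugating with the quadratic phase $\phi=(x^2+2c_1tx)/(4c_2t)$, for which $e^{i\phi}\mathcal{L}f=-2ic_2t\,\partial_x(fe^{i\phi})$ and $e^{i\phi}\mathcal{L}^2f=-4c_2^2t^2\,\partial_{xx}(fe^{i\phi})$, followed by Gagliardo--Nirenberg; your route through the factorization $\mathcal{L}=U(t)xU(-t)$, the dispersive bound $\|U(t)g\|_{L^\infty}\lesssim t^{-1/2}\|g\|_{L^1}$, and the interpolation $\|f\|_{L^1}\lesssim\|f\|_{L^2}^{1/2}\|xf\|_{L^2}^{1/2}$ is an equivalent and equally standard derivation of the same lemma (your extra additive term $\|\mathcal{L}u\|_{L^2}$ is harmless, since it only adds a contribution $t^{1-\alpha/2}\le t^{2-\alpha}$ after integration). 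The one caveat is scope: existence, uniqueness, \eqref{3192} and \eqref{ldecay} are assertions of the theorem, not inputs; the paper obtains them, respectively, from Strichartz estimates plus the $L^2$ dissipation (Lemma~\ref{l4.1}), from the first-order identity \eqref{z1} combined with the same dissipativity mechanism you use for the principal part, and from Lemma~\ref{ll1} applied to $u$ itself --- all mechanisms your text identifies in passing, so this is an omission of routine material rather than a gap in the idea.
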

\begin{rem}
	Applying the operator  $\mathcal{L} $ to the equation (\ref{1.1}), we obtain easily  the energy inequality 
	\begin{equation}
		\|\mathcal{L} u(t,\cdot)\|_{L^2}\lesssim  \|\mathcal{L} u(1,\cdot)\|_{L^2}+\int_{1}^{t}\|u(\tau,\cdot)\|_{L^\infty }^\alpha \|\mathcal{L} u(t,\cdot)\|_{L^2} \mathrm{d}\tau .\label{energy}
	\end{equation}
Using Gronwall's inequality and a priori estimate  $\|u(\tau,\cdot)\|_{L^\infty }\lesssim \varepsilon \tau^{-1/2} $, Hayashi and Naumkin \cite{Hayashi} obtained a  moderate growth rate  of  $\|\mathcal{L} u\|_{L^2}$ in the critical case $\alpha =2$, which is essential to close the bootstrap assumption on  $\|u(t,x)\|_{L^\infty _x}.$  
	While the limit in [Theorem \ref{T1.3}, part (d)] demonstrates that one can not hope to deduce this from  (\ref{energy})   for  $\alpha <2$ as in \cite{Hayashi}, even for the  small initial data.  So  we assume the large dissipative condition (\ref{lambda1}) as in \cite{Kita2}, which is used in (\ref{151}) to derive the uniform bound (\ref{3192}). 
\end{rem}
Next,  we derive  the time decay rate of  the global solution obtained in Theorem \ref{T1.1}. 
\begin{thm}
	\label{T1.2}
	Assume that $u_0\in H^{0,1}$, $4/3<\alpha<2$, $\lambda $ satisfies the condition (\ref{lambda1}) and $u$ is the global solution obtained in Theorem \ref{T1.1}.  There exists a constant $C>0$ such that for all $t\ge1$,
	\begin{eqnarray}\label{decay}
		\left\|u(t,x)\right\|_{L_x^\infty }\le Ct^{-1/\alpha }.
	\end{eqnarray}
\end{thm}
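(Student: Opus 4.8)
The plan is to upgrade the free decay rate $t^{-1/2}$ of Theorem~\ref{T1.1} to the dissipative rate $t^{-1/\alpha}$ by tracking the solution's profile on the Fourier side and exploiting the sign of $\text{Im}\,\lambda$. Write $\mathcal U(t)=e^{itF(D)}$ for the group solving $(D_t-F(D))u=0$, set $w(t):=\mathcal U(-t)u(t)$ and $\hat w(t,\xi):=(\mathcal F w(t))(\xi)$. Since $\mathcal U$ is unitary on $L^2$ and $\mathcal F\,\mathcal U(-t)\mathcal L u=i\partial_\xi\hat w$, the uniform bound (\ref{3192}) becomes $\|\hat w(t)\|_{L^2}+\|\partial_\xi\hat w(t)\|_{L^2}\le C\|u_0\|_{H^{0,1}}$ for all $t\ge0$. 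Because $F$ is quadratic, the stationary-phase factorization of $\mathcal U(t)$ is exact up to a unimodular multiplier $M$ with $|M-1|\le\min(2,C|x|^2/t)$: the stationary point $\xi_*=\xi_*(x/t)$ solves $F'(\xi_*)=-x/t$ and $F''\equiv2c_2$, giving $u(t,x)=(2c_2t)^{-1/2}e^{i\Psi(t,x)}\hat w(t,\xi_*)+\mathrm{err}$ and hence the two-sided comparison
\[
\Big|\,\|u(t)\|_{L^\infty_x}-(2c_2t)^{-1/2}\|\hat w(t)\|_{L^\infty_\xi}\,\Big|\le Ct^{-3/4},\qquad t\ge1.
\]
The point is that this error is controlled by the first moment alone: writing it as $\|(M-1)w\|_{L^1}$, splitting the integral at $|x|\sim\sqrt t$ and using Cauchy--Schwarz with $\|\hat w\|_{L^2},\|\partial_\xi\hat w\|_{L^2}\lesssim1$ gives $\|(M-1)w\|_{L^1}\lesssim t^{-1/4}$, which against the prefactor $t^{-1/2}$ yields $t^{-3/4}$.

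Next I would derive the dissipative ODE for the profile. Applying $\mathcal F\,\mathcal U(-t)$ to (\ref{1.1}) gives $\partial_t\hat w=i\lambda\,\mathcal F\,\mathcal U(-t)(|u|^\alpha u)$; substituting the factorization into the nonlinearity and using that $\mathcal F\,\mathcal U(-t)$ undoes the leading-order representation (lowering the power of $t$ by one half) yields
\[
\partial_t\hat w(t,\xi)=i\lambda\,(2c_2t)^{-\alpha/2}|\hat w(t,\xi)|^\alpha\hat w(t,\xi)+R(t,\xi),
\]
where $R$ collects the contributions of replacing $u$ by its leading term in $|u|^\alpha u$ and of the reverse stationary-phase correction, both estimated in $L^\infty_\xi$ via $\|\mathcal F\,\mathcal U(-t)g\|_{L^\infty_\xi}\le\|g\|_{L^1}$. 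Pairing with $\overline{\hat w}$ and taking real parts, and using $\text{Re}(i\lambda)=-\,\text{Im}\,\lambda$, gives the pointwise differential inequality
\[
\partial_t|\hat w(t,\xi)|^2\le-2(\text{Im}\,\lambda)(2c_2t)^{-\alpha/2}|\hat w(t,\xi)|^{\alpha+2}+2|\hat w(t,\xi)|\,|R(t,\xi)|.
\]

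The heart of the matter is then a comparison with the model equation $\dot y=-2(\text{Im}\,\lambda)(2c_2t)^{-\alpha/2}y^{(\alpha+2)/2}$, all of whose solutions decay to the universal profile $y(t)\sim t^{-(2-\alpha)/\alpha}$ as $t\to\infty$, independently of their (bounded) initial size. Starting from $\|\hat w(t)\|_{L^\infty_\xi}\lesssim1$ (equivalent to (\ref{ldecay})) and invoking the large-dissipative condition (\ref{lambda1}), which forces the dissipative term to dominate the oscillatory feedback, a barrier argument should give
\[
\|\hat w(t)\|_{L^\infty_\xi}\le C\,t^{-(2-\alpha)/(2\alpha)},\qquad t\ge1.
\]
Combining this with the first-step comparison and using $\tfrac12+\tfrac{2-\alpha}{2\alpha}=\tfrac1\alpha$ gives $\|u(t)\|_{L^\infty_x}\le Ct^{-1/\alpha}+Ct^{-3/4}$, and since $4/3<\alpha$ forces $1/\alpha<3/4$, the first term dominates and (\ref{decay}) follows.

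I expect the main obstacle to be the third step, the pointwise-in-$\xi$ decay of $\|\hat w\|_{L^\infty_\xi}$ together with controlling $R$ using only the first-moment bound $\|\partial_\xi\hat w\|_{L^2}\lesssim1$. Two points require care. First, since $\|\cdot\|_{L^\infty_\xi}$ is not differentiable and the model decay is not monotone, the comparison must be run as a barrier/bootstrap against $Ct^{-(2-\alpha)/(2\alpha)}$ with $C$ large enough to absorb $R$, the condition (\ref{lambda1}) guaranteeing that the nonlinear feedback never reverses the favorable sign of the leading dissipative term. Second, the factorization error is small only in $L^\infty_x$ and not in $L^2$, so the contribution to $R$ of replacing $u$ by its leading term must be handled by a weighted H\"older splitting rather than a crude product estimate. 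The threshold $\alpha>4/3$ itself enters transparently in the final combination, where the factorization error $t^{-3/4}$ must be dominated by the target rate $t^{-1/\alpha}$, that is $1/\alpha<3/4$; the same balance governs the subordination of $R$ to the dissipative rate $t^{-(\alpha+2)/(2\alpha)}$.
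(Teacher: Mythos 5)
Your proposal is correct in outline, but it takes a genuinely different route from the paper, so it is worth comparing the two. The paper never passes to the Fourier side: it makes the semiclassical change of variables $u(t,x)=t^{-1/2}v(t,x/t)$, $h=1/t$, localizes $v$ near the Lagrangian $\{x+F'(\xi)=0\}$ through the Weyl operator $v_\Lambda=G_h^w\bigl(\gamma((x+F'(\xi))/\sqrt h)\bigr)v$, and derives the ODE (\ref{4.34}) by pseudodifferential calculus (Lemmas \ref{l4.3}--\ref{l5.6}), the $t^{-1/4}$ losses coming from the $\mathcal{L}(L^2,L^\infty)$ bound of Proposition \ref{P2.6}. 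You instead work with the profile $\hat w=\mathcal F\,\mathcal U(-t)u$ and the exact factorization of $e^{itF(D)}$ (exact since $F$ is quadratic), so your remainder $R$ consists only of the two substitution errors $M\mapsto 1$ inside and outside the nonlinearity; writing $w=\mathcal U(-t)u$, both are bounded by $Ct^{-\alpha/2-1/4}\|xw\|_{L^2}\bigl(t^{1/2}\|u\|_{L^\infty}+\|\hat w\|_{L^\infty}\bigr)^\alpha$ using $\|(M-1)g\|_{L^1}\lesssim t^{-1/4}\|xg\|_{L^2}$ and the chain rule (here $\alpha\ge1$ is used, which holds). The two decisive ingredients are then the same in both proofs: first, the uniform bound (\ref{3192}), i.e. $\|xw\|_{L^2}=\|\mathcal L u\|_{L^2}\lesssim 1$; second, retention of the decaying amplitude factor $\bigl(t^{1/2}\|u\|_{L^\infty}+\|\hat w\|_{L^\infty}\bigr)^\alpha\sim t^{\alpha/2-1}$ against the $t^{-1/4}$ loss, which makes $R=O(t^{-5/4})$ under the bootstrap. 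This second point is exactly what separates both your argument and the paper's from \cite{Kita2,Jin}: with a bare remainder $O(t^{-\alpha/2-1/4})$ the contradiction step forces $1/4+1/\alpha\le\alpha/2$, i.e. $\alpha\ge\frac{1+\sqrt{33}}{4}$, whereas with the amplitude factor one gets $|R|\,|\hat w|^{-(\alpha+1)}\lesssim K^{-1}t^{-3/4-\alpha/2+1/\alpha}$ and domination by the dissipative term $\lambda_2 t^{-\alpha/2}$ on $t\ge1$ requires precisely $1/\alpha\le 3/4$. Your closing barrier step is then identical in substance to Lemma \ref{l4.8}. What each approach buys: yours is more elementary (no Weyl calculus at all) and shows the semiclassical machinery is not strictly necessary for Theorem \ref{T1.2}; the paper's localization is a method imported from problems without exact factorization (Klein--Gordon type, cf.\ \cite{Delort}), and it sets up the objects $v_\Lambda$, $R_1(v)$, $R_2(v)$ that the paper reuses verbatim for the asymptotics in Theorem \ref{T1.3}.

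Three small corrections to your write-up, none fatal. First, as you yourself anticipate, the comparison cannot be run as a differential inequality for $\|\hat w\|_{L^\infty_\xi}$ (not differentiable, and $|\hat w(t,\xi)|$ has no global lower bound); it must be a first-crossing contradiction argument at a fixed $\xi$, exactly as in Lemma \ref{l4.8}, where the assumed lower bound $|\hat w|>Kt^{1/2-1/\alpha}$ on the crossing interval is what lets you divide by $|\hat w|^{\alpha+1}$. Second, your claim that the factorization error is ``small only in $L^\infty_x$ and not in $L^2$'' is false but harmless: $\|(M-1)w\|_{L^2}\le Ct^{-1/2}\|xw\|_{L^2}$ since $|M-1|\le\min\bigl(2,|x|^2/(Ct)\bigr)\le C|x|t^{-1/2}$; in any case Theorem \ref{T1.2} needs only the $L^\infty$ bounds. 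Third, the large-dissipation condition (\ref{lambda1}) is not what makes the dissipative term dominate in the ODE --- there only $\lambda_2>0$ and the largeness of the bootstrap constant $K$ are used; (\ref{lambda1}) enters solely through the proof of (\ref{3192}) in Theorem \ref{T1.1}, which you correctly take as input.
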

\begin{rem}
	A similar time decay estimate as in   (\ref{decay}) was obtained in \cite{Kita2} under the assumptions $\frac{1+\sqrt{33}}{4}<\alpha \le2$, which was then extended to the case  $\frac{7+\sqrt{145}}{12}<\alpha \le2$ in \cite{Jin}.  We note that $\frac{4}{3}<\frac{1+\sqrt{33}}{4}\approx 1.686$ and $\frac{4}{3}<\frac{7+\sqrt{145}}{12}\approx1.586$. Therefore, Theorem \ref{T1.2} is an improvement of the corresponding results in \cite{Jin,Kita2}. 
\end{rem}
\begin{rem}
The additional assumption  $\alpha >4/3$ ensures that the remainder term    $v_{\Lambda ^c}$ decays faster than   $v$  (see (\ref{1511}) and (\ref{1510})). 
\end{rem}
\begin{rem}
	Theorem \ref{T1.2} is valid without any  smallness conditions on the initial data. Moreover, the solution decays faster than the free solution. Recall that in one space dimension, the free solution decays like $t^{-1/2}$. 
\end{rem}

Finally, we give a large time  asymptotic formula for the solutions and show the existence of modified scattering states for a certain range of the exponent in the nonlinear term.  
\begin{thm}\label{T1.3}
Suppose that the assumptions in Theorem \ref{T1.2} are satisfied and 
	\begin{equation}
		u_0\in H^{0,1},\ \frac{1+\sqrt{33}}{4}<\alpha <2,\notag
	\end{equation}
	or 
	\begin{equation}
		u_0\in H^{0,2},\ \frac{7+\sqrt{145}}{12}<\alpha <2,\notag
	\end{equation}
	then  the followings hold:\\
	(a) Let  
	\begin{equation}\label{eq1}
		\Phi(t,x)=\int_{1}^{t}s^{-\alpha /2}\left|v_\Lambda(s,x) \right| ^\alpha  \mathrm{d}s,
	\end{equation}
	where  $v_{\Lambda }$ is the function defined in (\ref{1219}). There exists a unique complex valued function $z_+(x)\in L^\infty _x\cap L^2_x$ such that for some $\kappa>0$,
	\begin{eqnarray}
		\left\|v_{\Lambda }(t,x)\exp\left(-i(w(x)t+\lambda \Phi(t,x))\right)-z_+(x)\right\|_{L^\infty _x\cap L^2_x}=O(t^{-\kappa})\notag
	\end{eqnarray}
	holds as $t\rightarrow \infty $, where  $w(x)=:-(x+c_1)^2/(4c_2)+c_0.$ \\
	(b) Let 
	\begin{eqnarray}\label{3301}
		K(t,x)=1+\frac{2\alpha \lambda _2}{2-\alpha }\left|z_+(x)\right|^\alpha \left(t^{(2-\alpha )/2}-1\right),
	\end{eqnarray}
	\begin{eqnarray}
		\psi_+(x)=\alpha \lambda_2 \int_{1}^{\infty }s^{-\alpha /2}\left(\left|v_{\Lambda }(s,x)\right|^\alpha\exp \left( {\alpha \lambda _2\Phi(s,x)}\right) -\left|z_+(x)\right|^\alpha \right) \mathrm{d}s,\label{a2}
	\end{eqnarray}
	and 
	\begin{equation}\label{eq2}
		S(t,x)=\frac{1}{\alpha \lambda _2}\log \left(K(t,x)+\psi_+(x)\right).
	\end{equation}
	The asymptotic formula 
	\begin{eqnarray}\label{3302}
		u(t,x)=\frac{1}{\sqrt t}e^{i\left(w(\frac{x}{t})t+\lambda S(t,\frac{x}{t})\right)}z_+(\frac{x}{t})+O_{L^\infty _x}(t^{-1/2-\kappa})\cap O_{L^2_x}(t^{-\kappa})
	\end{eqnarray}
	holds 	as $t\rightarrow \infty $, where $\kappa$ is the same  constant as in part (a). \\
	(c) Let $u_+(x)=\frac{1}{\sqrt {4\pi c_2}} e^{-i\frac{\pi}{4}}e^{-i\frac{c_1x}{2c_2}} (\mathcal{F} z_+)(\frac{x}{2c_2})$, we have the modified linear scattering 
	\begin{equation}
		\lim_{t\rightarrow \infty }\left\|u(t,x)-e^{i\lambda S(t,\frac{x}{t})}e^{iF(D)t}u_+(x)\right\|_{L_x^2}=0.\label{123}
	\end{equation}\\
	(d) If $u_0\neq0$, then the limit
	\begin{eqnarray}
		\lim_{t\rightarrow \infty } t^{\frac{1}{\alpha }}\|u(t,x)\|_{L^\infty _x}=\left(\frac{2-\alpha }{2\alpha \lambda _2}\right)^{\frac{1}{\alpha }},\qquad \text{when }\alpha _0<\alpha <2,\label{qwe2}
	\end{eqnarray}
	exists and is independent of the initial value, where  $\alpha _0=\frac{5+\sqrt {89}}{8} \approx 1.804$. 
\end{thm}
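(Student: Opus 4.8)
The plan is to read $|u(t,x)|$ off the pointwise asymptotics (\ref{3302}) and then analyze the resulting spatial profile. Since $w$, $\Phi$ and hence $S$ are real-valued while $\lambda=\lambda_1+i\lambda_2$, one has $\bigl|e^{i(w(x/t)t+\lambda S(t,x/t))}z_+(x/t)\bigr|=e^{-\lambda_2 S(t,x/t)}|z_+(x/t)|$, and by (\ref{eq2}) together with (\ref{3301}) we get $e^{-\lambda_2 S}=(K+\psi_+)^{-1/\alpha}$. Using $\bigl||a|-|b|\bigr|\le|a-b|$ on the $L^\infty$ remainder in (\ref{3302}) gives
\[
|u(t,x)|=t^{-1/2}\bigl(K(t,x/t)+\psi_+(x/t)\bigr)^{-1/\alpha}|z_+(x/t)|+O_{L^\infty_x}(t^{-1/2-\kappa}).
\]
Abbreviating $c=\tfrac{2\alpha\lambda_2}{2-\alpha}$, $\tau=t^{(2-\alpha)/2}$, $a(y)=|z_+(y)|^\alpha$ and $d(y)=1-ca(y)+\psi_+(y)$, formula (\ref{3301}) reads $K(t,y)+\psi_+(y)=d(y)+ca(y)\tau$; the change of variables $x=ty$ and multiplication by $t^{1/\alpha}$ then yield
\[
t^{1/\alpha}\|u(t,x)\|_{L^\infty_x}=\sup_{y}\Bigl(\frac{a(y)\tau}{d(y)+ca(y)\tau}\Bigr)^{1/\alpha}+O\bigl(t^{1/\alpha-1/2-\kappa}\bigr),
\]
and I write $g(t,y)$ for the quantity under the supremum.

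The remainder is negligible exactly when $t^{1/\alpha-1/2-\kappa}\to0$, i.e. when $\kappa>\tfrac1\alpha-\tfrac12=\tfrac{2-\alpha}{2\alpha}$. I would therefore track the precise gain $\kappa$ produced in parts (a)–(b) and compare it with $\tfrac{2-\alpha}{2\alpha}$; the requirement $\kappa>\tfrac{2-\alpha}{2\alpha}$ is what pins down the threshold $\alpha_0=\tfrac{5+\sqrt{89}}{8}$. I expect this bookkeeping — not the supremum analysis — to be the main obstacle, since it forces quantitative control of $\kappa$ throughout the modified-scattering construction rather than merely knowing $\kappa>0$.

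For the value of the limit, note that at every $y$ with $a(y)>0$ the denominator $d(y)+ca(y)\tau\to\infty$, so $g(t,y)^\alpha=\tfrac{a\tau}{d+ca\tau}\to\tfrac1c$ and thus $g(t,y)\to\bigl(\tfrac1c\bigr)^{1/\alpha}=\bigl(\tfrac{2-\alpha}{2\alpha\lambda_2}\bigr)^{1/\alpha}=:C_\alpha$; crucially the factor $a(y)$ cancels, which is the source of the universality. Because $u_0\neq0$ forces $z_+\not\equiv0$ (the leading dynamics conserves $h(s,y):=|v_\Lambda(s,y)|^\alpha e^{\alpha\lambda_2\Phi(s,y)}$, whose limit as $s\to\infty$ equals $|z_+(y)|^\alpha$, so $z_+$ vanishes identically only if $u_0$ does), the set $\{a>0\}$ has positive measure. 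Evaluating the essential supremum at a.e. such point gives $\|g(t,\cdot)\|_{L^\infty}\ge g(t,y)\to C_\alpha$, hence $\liminf_{t\to\infty}\sup_y g(t,y)\ge C_\alpha$.

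It remains to prove the matching upper bound $\limsup_{t\to\infty}\sup_y g(t,y)\le C_\alpha$, i.e. to rule out overshoot where $d(y)<0$. Writing $g(t,y)^\alpha-\tfrac1c=\tfrac{-d(y)}{c(d(y)+ca(y)\tau)}$, the contribution is negative wherever $d\ge0$, so only $d<0$ matters. There I use that $K+\psi_+>0$ (necessary for $S$ in (\ref{eq2}) to be defined), which at $t=1$ reads $1+\psi_+(y)>0$. Setting $p=1+\psi_+(y)>0$ and $q=ca(y)>0$, the inequality $d<0$ means $q>p$, while $d+ca(y)\tau=q(\tau-1)+p$, so
\[
g(t,y)^\alpha-\frac1c=\frac{q-p}{c\bigl(q(\tau-1)+p\bigr)}\le\frac{q}{c\,q(\tau-1)}=\frac{1}{c(\tau-1)},
\]
uniformly in $y$. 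Consequently $\sup_y g(t,y)^\alpha\le\tfrac1c\bigl(1+\tfrac{1}{\tau-1}\bigr)\to\tfrac1c$, giving $\limsup_t\sup_y g(t,y)\le C_\alpha$. Combining the two bounds with the vanishing of the remainder (valid for $\alpha>\alpha_0$) yields $\lim_{t\to\infty}t^{1/\alpha}\|u(t,x)\|_{L^\infty_x}=\bigl(\tfrac{2-\alpha}{2\alpha\lambda_2}\bigr)^{1/\alpha}$, which depends only on $\alpha$ and $\lambda_2=\mathrm{Im}\,\lambda$, not on $u_0$, as claimed.
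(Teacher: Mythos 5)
Your proposal addresses only part (d) of Theorem \ref{T1.3}, and even there it rests on inputs you never establish. Parts (a), (b), (c) --- the existence of $z_+$, the asymptotic formula (\ref{3302}) with a quantitative rate $\kappa$, and the modified linear scattering --- are taken as given, yet they are the bulk of the theorem: in the paper they require the refined decay estimate of Lemma \ref{l5.1} (an ODE contradiction argument giving $\|v_\Lambda(t)\|_{L^\infty}\le K_0\,t^{1/2-1/\alpha}$ with $K_0^\alpha=\frac{2-\alpha}{2\alpha\lambda_2-\varepsilon_0}$), a Cauchy-in-time argument for $z(t,x)=v_\Lambda e^{-i(w(x)t+\lambda\Phi)}$ whose convergence hinges on the sign condition $-1/4+\lambda_2K_0^\alpha<0$ (this is exactly where the lower bounds $\frac{1+\sqrt{33}}{4}$, resp.\ $\frac{7+\sqrt{145}}{12}$, on $\alpha$ enter), the asymptotics (\ref{3222}) for $e^{\alpha\lambda_2\Phi}$, and, for (c), a Fresnel-integral computation. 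Moreover, the one step you yourself identify as ``the main obstacle'' --- verifying $\kappa>\frac{2-\alpha}{2\alpha}$, which is what pins down $\alpha_0=\frac{5+\sqrt{89}}{8}$ --- is precisely the step you defer (``I would therefore track\dots''); in the paper this is the explicit computation that $\lim_{\varepsilon_0\to0}\left(\frac1\alpha-\frac12-\gamma(\varepsilon_0)\right)<0$ iff $\alpha>\frac{5+\sqrt{89}}{8}$, with $\gamma(\varepsilon_0)=\min\{1/4-\lambda_2K_0^\alpha,\ \beta/\alpha\}$. So what you have is a conditional argument, not a proof.

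Within part (d) itself there is also a genuine error: your justification that $u_0\neq0$ forces $z_+\not\equiv0$ --- ``the leading dynamics conserves $h(s,y)=|v_\Lambda|^\alpha e^{\alpha\lambda_2\Phi}$'' --- is false for the actual equation. That quantity is conserved only by the model ODE with the remainder dropped; the true equation (\ref{4.340}) carries the source $t^{-\alpha/2}R(v)$, and the entire difficulty is to exclude that this remainder drives $v_\Lambda$ to zero faster than the leading dynamics predicts. The paper's Claim \ref{c123} does this by showing that $z_+\equiv0$ implies the improved decay (\ref{655}), then representing $u$ as a Duhamel integral from $t=\infty$ and closing a Strichartz absorption estimate (using $\alpha>4/3$), so that $u\equiv0$ by uniqueness; nothing of this kind appears in your sketch, and without it the lower bound collapses. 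Two smaller points: your overshoot bound requires $1+\psi_+(y)>0$, which you infer from ``$S(1,\cdot)$ must be defined,'' but the theorem only asserts (\ref{3302}) as $t\to\infty$ and the paper only proves $K+\psi_+\ge1/2$ for $t$ large, so this positivity is unjustified; and the upper bound in (d) is obtained more robustly (and for the whole range of $\alpha$ in Theorem \ref{T1.2}) directly from Lemma \ref{l5.1} and Lemma \ref{l4.3-00} with $\varepsilon_0\to0$, rather than through the asymptotic formula, which forces the unnecessary requirement that $\kappa$ be large. Your supremum analysis and the cancellation of $|z_+(y)|^\alpha$ in the limit are correct and match the mechanism of the paper's lower-bound argument, but they are the easy part; the scaffolding they stand on is missing.
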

\begin{rem}
	A similar large time asymptotic formula of the solutions is obtained in \cite{Kita2} in the case $\frac{9+\sqrt{177}}{12}<\alpha <2$. 
	Since $\frac{7+\sqrt{145}}{12}\approx 1.587< \frac{1+\sqrt{33}}{4}\approx 1.686<\frac{9+\sqrt{177}}{12}\approx 1.859$, we see that Theorem \ref{T1.3} generates the result of \cite{Kita2} in the range of $\alpha $. 
\end{rem}
\begin{rem}
	The  assumption on the lower bound of  $\alpha $  ensures the convergence of the integral (\ref{3222}). It can be extended to  $\alpha >\frac{2}{3}$ if we assume some nonvanishing conditions on the initial values. See e.g. \cite{CaHan,CaJFA}. 
\end{rem}
\begin{rem}
	According to the asymptotic formulas (\ref{3302}) and (\ref{123}), we see that the solution  $u$ is not asymptotically free. 	By the definition (\ref{eq2}) of $S(t,x)$, we can write the modification factor $e^{i\lambda S(t,x)}$ explicitly:
	\begin{equation}
		e^{i\lambda S(t,x)}= \frac{\exp \left\{\frac{i\lambda _1}{\alpha \lambda _2}\log \left\{1+\frac{2\alpha \lambda _2}{2-\alpha }|z_+(x)|^\alpha (t^{(2-\alpha )/2}-1)+\psi_+(x)\right\} \right\} }{(1+\frac{2\alpha \lambda _2}{2-\alpha }|z_+(x)|^\alpha (t^{(2-\alpha )/2}-1)+\psi_+(x))^{1/\alpha }}.\label{652}
	\end{equation}
\end{rem}

\vspace{0.5cm}
 
\noindent \textbf{Strategy of the proof.} We briefly sketch the strategy used to derive the  decay  estimate (\ref{decay}), which  is the key to establishing the large time asymptotics of the solution.      We adapt the  semiclassical analysis method  introduced  by Delort  \cite{Delort}, see also \cite{S,Zhang} which are more close to the problem we are considering. We make first a semiclassical change of variables
\begin{equation}
	u(t,x)=\frac{1}{\sqrt{t}}v(t,\frac{x}{t}),\label{uv1}
\end{equation}
for some new unknown function $v$, that allows to  rewrite   the equation  (\ref{1.1})    as
\begin{equation}\label{1.15}
	(D_t-G_h^w(x\xi+F(\xi)))v=\lambda h^{\alpha /2}|v|^\alpha v,
\end{equation}
where the semiclassical parameter $h=\frac1t$, and the Weyl quantization of a symbol $a$  is  given by 
\begin{equation}
	G_h^w(a)u(x)=\frac{1}{2\pi h}\iint e^{\frac{i}{h}(x-y)\xi}a(\frac{x+y}{2},\xi)u(y)dyd\xi. \notag
\end{equation}
By (\ref{uv1}), the decay estimate (\ref{decay}) is equivalent to 
\begin{equation}
	\|v(t,x)\|_{L^\infty _x}\le Ct^{1/2-1/\alpha }.\label{1511}
\end{equation}
If we develop the symbol  $x\xi+F(\xi)$ as follows by using (\ref{1.3})
\begin{equation}
	x\xi+F(\xi)=w(x)+\frac{(x+F'(\xi))^2}{4c_2}\qquad \text{with  } w(x)=-\frac{(x+c_1)^2}{4c_2}+c_0,\label{15100}
\end{equation}
we deduce from  (\ref{1.15}) an ODE for  $v$: 
\begin{equation}
	D_tv=w(x)v+\lambda h^{\alpha /2}|v|^\alpha v+\frac{1}{4c_2}G_h^w((x+F'(\xi))^2)v\label{e123}
\end{equation}
By semiclassical Sobolev inequality,  $\|G_h^w((x+F'(\xi))^2)v\|_{L^\infty _x}$ is  controlled by  some energy norm of  $v$ that contains  the spatial derivative of order three. While deducing  this norm  from the  equation (\ref{1.15}) via  the standard energy method  requires   $\alpha \ge 2$. Instead, we use the operators whose symbols are localized  in a neighbourhood of $M=:\{(x,\xi)\in \mathbb{R}^2:\ x+F'(\xi)=0\}$ of size $O(\sqrt h)$.
In that way we can apply Proposition \ref{P2.6} to pass  uniform  norms of the remainders to the  $L^2$ norm losing only a power  $h^{-1/4}$. 

 More precisely, we  set  \begin{eqnarray}
	v_{\Lambda }=G_h^w(\gamma(\frac{x+F'(\xi)}{\sqrt h}))v,\label{1219}
\end{eqnarray}
where $\gamma\in C_0^\infty (\mathbb{R})$ satisfying $\gamma=1$ in a neighbourhood of zero.  In Lemma \ref{l4.3-00}, we will show that 
$v_{\Lambda ^c}=:G_h^w(1-\gamma(\frac{x+F'(\xi)}{\sqrt {h}}))v$ satisfies the uniform estimate
\begin{equation}
	\|v_{\Lambda ^c}(t,x)\|_{L^\infty _x}\lesssim t^{-1/4}.\label{1510}
\end{equation}
We see that it sufficies to prove the estimate 
\begin{equation}
	\|v_{\Lambda }(t,x)\|_{L^\infty _x} \le Ct^{1/2-1/\alpha },\notag
\end{equation}
since  $v_{\Lambda ^c}$ decays faster than  $v$  by the assumption  $\alpha >4/3$. 
Applying  $G_h^w(\gamma(\frac{x+F'(\xi)}{\sqrt{h}}))$ to (\ref{e123}) and using (\ref{15100}) we obtain the ODE for $v_{\Lambda }$ 
\begin{equation}
D_tv_\Lambda=w(x)v_{\Lambda }+\lambda h^{\alpha /2}  |v_\Lambda|^\alpha v_\Lambda+ R(v)\label{552}
\end{equation}
where the remainder 
\begin{eqnarray}
R(v)&=&[D_t-G^w_h(x\xi+F(\xi)),G^w_h(\gamma(\frac{x+F'(\xi)}{\sqrt{h}}))]v+\frac{1}{4c_2}G_h^w((x+F'(\xi))^2)v_{\Lambda }\notag\\
&&-\lambda h^{\alpha /2} G^w_h(1-\gamma(\frac{x+F'(\xi)}{\sqrt{h}}))(|v|^\alpha v)+\lambda h^{\alpha /2}\left( |v|^\alpha v-|v_\Lambda|^\alpha v_\Lambda\right)\notag
\end{eqnarray}
satisfies the estimate (see Lemmas \ref{l4.3}--\ref{l5.6})
\begin{equation}
	\|R(v)\|_{L^\infty _x}\lesssim t^{-5/4}+(\|v_{\Lambda }\|_{L^\infty _x}^\alpha +\|v\|_{L^\infty _x}^\alpha )t^{-\alpha /2-1/4}.\notag
\end{equation} 
Note that  $R(v)$  decays faster than the remainder in (\ref{191}) when  $\alpha <2$. Performing a bootstrap and a contradiction argument, one finally deduce from the ODE (\ref{552})  the desired  $L^\infty $ estimate for  $v_{\Lambda }$, and then in the solution  $u$.  
The details can be found in Subsection \ref{sub2}. 

\vspace{0.5cm} 
\noindent \textbf{Outline.} 
The framework of this paper is organized as follows.

 In Section \ref{S2}, we present the  definitions and  some useful properties of Semiclassical pseudo-differential operators.  In Section \ref{S3}, we establish the  global existence and uniqueness of the solution to (\ref{1.1}).  In Section \ref{S5}, we prove the decay estimates as stated in Theorem \ref{T1.2}, combining the  bootstrap and the contradiction argument.   Finally, in Section \ref{S6}, we  establish the asymptotic formulas in Theorem \ref{T1.3}. 
\section{Semiclassical pseudo-differential operators}\label{S2}
The proof of the main theorem will rely on the use of the  semiclassical pseudo-differential calculus. For simplicity, we give only the definitions and properties of the  operators we shall use. For more properties about semiclassical pseudo-differential operators, we refer to  Chapter 7 of the book of Dimassi-Sj\"{o}strand \cite{D-S} and  Chapter 4 of the book of Zworski \cite{Zworski}. 
\begin{defn}
	Let  $a(x,\xi)\in \mathcal{S} (\mathbb{R}^2)$ and  $h\in ( 0,1] $. Define the Weyl quantization to be the operator $G^w_h(a)$ acting on $u\in \mathcal{S}(\mathbb{R})$ by the formula
	\begin{equation}
		G^w_h(a)u=\frac{1}{2\pi h}\int_{\mathbb{R}}\int_{\mathbb{R}}
		e^{\frac{i}{h}(x-y)\xi}a(\frac{x+y}{2},\xi)u(y)dyd\xi.\notag
	\end{equation}
\end{defn}
We  have the following boundedness for Weyl quantization. 
\begin{prop}[Proposition 2.7 in \cite{Zhang}]\label{P2.6}
	Let $a(\xi)$ be a smooth function satisfying $|\partial_\xi^\alpha a(\xi)|$ $\leq C_\alpha <\xi>^{-1-\alpha}$ for any  $\alpha \in \mathbb{N}$. Then for  $h\in ( 0,1] $
	\begin{equation}
		\|G^w_h(a(\frac{x+F'(\xi)}{\sqrt{h}}))\|_{\mathcal{L}(L^2,L^\infty)}=O(h^{-\frac{1}{4}}),
		\ \|G^w_h(a(\frac{x+F'(\xi)}{\sqrt{h}}))\|_{\mathcal{L}(L^2,L^2)}=O(1).\notag
	\end{equation}
\end{prop}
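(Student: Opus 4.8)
The plan is to reduce the operator to an explicit integral kernel and read off both bounds by hand, exploiting the crucial fact that $F'(\xi)=2c_2\xi+c_1$ is \emph{affine} in $\xi$. Writing $b(x,\xi)=a\big(\tfrac{x+F'(\xi)}{\sqrt h}\big)$ and $z=\tfrac{x+y}{2}$, the kernel of $G^w_h(b)$ is
\[
K(x,y)=\frac{1}{2\pi h}\int_{\mathbb{R}}e^{\frac{i}{h}(x-y)\xi}\,a\Big(\frac{z+F'(\xi)}{\sqrt h}\Big)\,d\xi.
\]
First I would substitute $\eta=\tfrac{z+F'(\xi)}{\sqrt h}$, i.e. $\xi=\tfrac{\sqrt h\,\eta-z-c_1}{2c_2}$ with $d\xi=\tfrac{\sqrt h}{2c_2}\,d\eta$. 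The affine change linearises the phase in $\eta$, and the remaining $\eta$-integral is exactly a Fourier transform of $a$. This yields the closed form
\[
K(x,y)=\frac{1}{2\sqrt{2\pi}\,c_2\,\sqrt h}\,e^{-\frac{i(x-y)(z+c_1)}{2c_2 h}}\,(\mathcal{F}a)\Big(-\frac{x-y}{2c_2\sqrt h}\Big),
\]
so that $|K(x,y)|=\tfrac{1}{2\sqrt{2\pi}\,c_2\sqrt h}\,\big|(\mathcal{F}a)\big(\tfrac{x-y}{2c_2\sqrt h}\big)\big|$ depends on $x,y$ only through the single rescaled difference $\tfrac{x-y}{\sqrt h}$.

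For the $\mathcal{L}(L^2,L^\infty)$ bound I would estimate $\|G^w_h(b)u\|_{L^\infty}\le \big(\sup_x\|K(x,\cdot)\|_{L^2_y}\big)\|u\|_{L^2}$ by Cauchy--Schwarz. Computing $\|K(x,\cdot)\|_{L^2_y}^2$ via the substitution $s=\tfrac{x-y}{2c_2\sqrt h}$ and Plancherel gives $\|K(x,\cdot)\|_{L^2_y}^2=\tfrac{1}{4\pi c_2\sqrt h}\|a\|_{L^2}^2$, uniformly in $x$. Since the case $\alpha=0$ of the hypothesis gives $|a(\xi)|\le C_0\langle\xi\rangle^{-1}\in L^2(\mathbb{R})$, this produces exactly the claimed factor $h^{-1/4}$. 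For the $\mathcal{L}(L^2,L^2)$ bound I would use Schur's test: after the same substitution, both $\int_{\mathbb{R}}|K(x,y)|\,dy$ and $\int_{\mathbb{R}}|K(x,y)|\,dx$ equal $\tfrac{1}{\sqrt{2\pi}}\|\mathcal{F}a\|_{L^1}$ (the Jacobian $2c_2\sqrt h$ cancels the prefactor $(\sqrt h)^{-1}$), so they are independent of both $h$ and the free variable. Schur's lemma then yields $\|G^w_h(b)\|_{\mathcal{L}(L^2,L^2)}\le \tfrac{1}{\sqrt{2\pi}}\|\mathcal{F}a\|_{L^1}=O(1)$.

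The only genuine work, and the step where the full strength of the symbol estimates enters, is to verify $\mathcal{F}a\in L^1(\mathbb{R})$. I would split $\mathbb{R}$ into $|s|\ge1$ and $|s|\le1$. For $|s|\ge1$, integrating by parts $N$ times and using $|\partial_\xi^N a|\le C_N\langle\xi\rangle^{-1-N}\in L^1$ for $N\ge1$ gives $|(\mathcal{F}a)(s)|\le C_N|s|^{-N}$, hence rapid decay and integrability at infinity. For $|s|\le1$ the symbol is only of order $-1$ and not integrable, so one cannot set $s=0$; instead I would split the $\xi$-integral at $|\xi|\sim 1/|s|$, bounding the low-frequency part by $\int_{|\xi|\le1/|s|}\langle\xi\rangle^{-1}\,d\xi\lesssim \log(1/|s|)$ and the high-frequency part by one integration by parts (the boundary terms are $\lesssim|s|^{-1}\langle 1/|s|\rangle^{-1}=O(1)$ and the remaining integral of $|a'|\lesssim\langle\xi\rangle^{-2}$ contributes $O(1)$). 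This gives the integrable logarithmic bound $|(\mathcal{F}a)(s)|\lesssim 1+\log(1/|s|)$ near the origin, so $\mathcal{F}a\in L^1$ and the proposition follows.

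The main obstacle is precisely this last estimate near $s=0$: the borderline logarithmic singularity of $\mathcal{F}a$ is exactly why the order $-1$ decay $\langle\xi\rangle^{-1}$ (rather than anything slower) must be assumed, and it is what separates the harmless $L^2\to L^2$ bound from the $h^{-1/4}$ loss in the $L^2\to L^\infty$ bound. Everything else is a bookkeeping of the explicit kernel, and the uniformity in $h$ is automatic once one observes that $|K|$ is a rescaling of the fixed profile $|\mathcal{F}a|$ whose $L^1$ and $L^2$ masses are scale-invariant in the relevant combination.
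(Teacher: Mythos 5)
Your proof is correct and complete: the affine substitution exploiting $F'(\xi)=2c_2\xi+c_1$, the explicit kernel $|K(x,y)|=\frac{1}{2\sqrt{2\pi}\,c_2\sqrt h}\bigl|(\mathcal{F}a)\bigl(\frac{x-y}{2c_2\sqrt h}\bigr)\bigr|$ (up to an immaterial reflection of the argument of $\mathcal{F}a$), the Cauchy--Schwarz bound giving $h^{-1/4}\|a\|_{L^2}$, the Schur bound giving $\|\mathcal{F}a\|_{L^1}$, and in particular the logarithmic estimate $|(\mathcal{F}a)(s)|\lesssim 1+\log(1/|s|)$ near $s=0$ — the one genuinely delicate point, where the full strength of $|a(\xi)|\le C_0\langle\xi\rangle^{-1}$ and $|a'(\xi)|\le C_1\langle\xi\rangle^{-2}$ enters — all check out. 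The paper itself gives no proof, quoting the statement as Proposition 2.7 of \cite{Zhang}, and the argument there is this same explicit-kernel computation, so your route matches the intended one; the only step you pass over silently (identifying the oscillatory-integral kernel with the a.e.-defined function, since $a\notin L^1$, e.g.\ by truncating $a$ and passing to the limit) is routine once you have shown $\mathcal{F}a\in L^1\cap L^2$.
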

Next, we introduce some useful composition  properties for  Weyl quantization. 
\begin{prop}[Theorem 7.3 in \cite{D-S}]\label{ab}
	Suppose that $a,b\in\mathcal{S}(\mathbb{R}^2)$. Then
	$$
	G_h^w(a\sharp b)=G_h^w(a)\circ G_h^w(b),
	$$
	where
	\begin{equation}
		a\sharp b(x,\xi):=\frac{1}{(\pi h)^{2}}\int_\mathbb{R}\int_\mathbb{R}\int_\mathbb{R}\int_\mathbb{R}
		e^{\frac{2i}{h}(\eta z- y\zeta)}a(x+z,\xi+\zeta)b(x+y,\xi+\eta)dyd\eta dz d\zeta. \notag
	\end{equation}
\end{prop}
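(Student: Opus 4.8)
The plan is to compute the Schwartz kernel of the composed operator $G_h^w(a)\circ G_h^w(b)$ and then read off its Weyl symbol via the Weyl inversion formula, matching the result against the stated expression for $a\sharp b$. Since $a,b\in\mathcal{S}(\mathbb{R}^2)$, for fixed $h$ the kernel
\[
K_a(x,y)=\frac{1}{2\pi h}\int_{\mathbb{R}} e^{\frac{i}{h}(x-y)\xi'}a\left(\frac{x+y}{2},\xi'\right)d\xi'
\]
is the (suitably scaled) inverse Fourier transform in $\xi'$ of a Schwartz function, hence smooth and rapidly decaying in the off-diagonal variable $x-y$; the same holds for $K_b$. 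Consequently the composition kernel $K(x,y')=\int_{\mathbb{R}}K_a(x,y)K_b(y,y')\,dy$ is given by an absolutely convergent integral, so $G_h^w(a)\circ G_h^w(b)$ is again an operator with a well-defined kernel and hence a well-defined Weyl symbol.

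The core computation is the symbol recovery. For a generic Weyl operator $G_h^w(c)$ with kernel $K_c(x,y)=\frac{1}{2\pi h}\int e^{\frac{i}{h}(x-y)\xi}c(\frac{x+y}{2},\xi)\,d\xi$, setting $X=\frac{x+y}{2}$, $s=x-y$ and inverting the Fourier transform in $s$ gives
\[
c(X,\xi)=\int_{\mathbb{R}} e^{-\frac{i}{h}s\xi}\,K_c\left(X+\tfrac{s}{2},X-\tfrac{s}{2}\right)ds.
\]
Applying this to $K$ and substituting the integral formulas for $K_a,K_b$ produces a fourfold integral in $(s,y_0,\xi',\eta')$ with phase $\Phi=-s\xi+(X+\tfrac{s}{2}-y_0)\xi'+(y_0-X+\tfrac{s}{2})\eta'$ and amplitude $a(\frac{X+s/2+y_0}{2},\xi')\,b(\frac{y_0+X-s/2}{2},\eta')$. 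The decisive step is the linear change of variables $\zeta=\xi'-\xi$, $\eta=\eta'-\xi$, $z=\tfrac{s}{4}+\tfrac{y_0-X}{2}$, $y=-\tfrac{s}{4}+\tfrac{y_0-X}{2}$. Under it $a$ is evaluated at $(X+z,\xi+\zeta)$ and $b$ at $(X+y,\xi+\eta)$, exactly as in the target formula; the spatial Jacobian is $\partial(z,y)/\partial(s,y_0)=\tfrac14$, so $ds\,dy_0=4\,dz\,dy$. After the terms linear in $X$ and $\xi$ cancel, the phase collapses to $\Phi=2(\eta z-y\zeta)$, and the prefactor becomes $\tfrac{4}{(2\pi h)^2}=\tfrac{1}{(\pi h)^2}$, reproducing $a\sharp b(X,\xi)$ verbatim with $(X,\xi)$ playing the role of $(x,\xi)$.

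The hard part will be the rigorous justification of these manipulations, since the $\xi'$- and $\eta'$-integrals are only conditionally convergent and both the interchange of integration order and the change of variables inside an oscillatory integral must be legitimized. I would handle this by first working on a dense subclass where all steps are elementary — symbols with compactly supported Fourier transform in the momentum variable, for which the $\xi'$- and $\eta'$-integrals genuinely yield $\delta$-factors and every integral is absolutely convergent — and then extend to all of $\mathcal{S}(\mathbb{R}^2)$ by a density and continuity argument, using the rapid decay of the Schwartz kernels to control the limits. An equivalent route is to insert Gaussian regularizers $e^{-\varepsilon(|\xi'|^2+|\eta'|^2)}$, carry out Fubini and the change of variables for each $\varepsilon>0$, and pass to $\varepsilon\to0$, the Schwartz decay of $a,b$ guaranteeing convergence of the regularized symbols to the stated oscillatory integral. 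As this is the classical Weyl composition theorem, one may alternatively appeal directly to Theorem 7.3 of \cite{D-S}, the computation above serving to confirm that the normalization conventions used here match.
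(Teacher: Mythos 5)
Your computation is correct, and it is worth noting at the outset that the paper itself does not prove this proposition at all: it is quoted verbatim as Theorem 7.3 of Dimassi--Sj\"ostrand, so there is no internal proof to compare against. Your kernel-composition argument is the standard direct proof, and I checked the details: the inversion formula $c(X,\xi)=\int e^{-\frac{i}{h}s\xi}K_c(X+\frac{s}{2},X-\frac{s}{2})\,ds$ is right for this normalization, your substitution gives the phase $-s\xi+(X+\frac{s}{2}-y_0)\xi'+(y_0-X+\frac{s}{2})\eta'$, the $\xi$-terms cancel, and with $z-y=\frac{s}{2}$, $z+y=y_0-X$ one gets $X+\frac{s}{2}-y_0=-2y$ and $y_0-X+\frac{s}{2}=2z$, so the phase collapses to $2(\eta z-y\zeta)$; the Jacobian $\partial(z,y)/\partial(s,y_0)=\frac14$ converts $\frac{1}{(2\pi h)^2}$ into $\frac{1}{(\pi h)^2}$, matching the stated formula exactly. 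One remark: the ``hard part'' you flag is not actually hard here. Since $a,b\in\mathcal{S}(\mathbb{R}^2)$ decay rapidly in \emph{both} variables, $|a(u,\xi')\,b(v,\eta')|\le C_N\langle u\rangle^{-N}\langle \xi'\rangle^{-N}\langle v\rangle^{-N}\langle\eta'\rangle^{-N}$, and $(s,y_0)\mapsto(u,v)$ is a linear bijection, so the full fourfold integral in $(s,y_0,\xi',\eta')$ is absolutely convergent; Fubini and the linear change of variables apply directly, and neither the Gaussian regularization nor the density argument is needed (they would be needed only for symbol classes without decay, e.g.\ $S(1)$, which is the generality in which Dimassi--Sj\"ostrand work). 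Incidentally, the proof in the cited source proceeds differently from yours: Dimassi--Sj\"ostrand decompose symbols into quantized exponentials of linear forms $e^{\frac{i}{h}(x^*x+\xi^*\xi)}$ and compose those via the exact commutation (Baker--Campbell--Hausdorff) relations, which avoids kernel manipulations entirely; your route is more elementary and self-contained for Schwartz symbols, while theirs extends verbatim to rougher symbol classes.
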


\begin{prop}[Proposition 2.4 in \cite{Zhang}]\label{P2.3-0}
	Suppose that $a,b\in\mathcal{S}(\mathbb{R}^2)$. Then
	\begin{equation}
		a\sharp b=ab+\frac{ih}{2} (\partial_x a\partial_\xi b-\partial_\xi a\partial_x b)+R, \notag
	\end{equation}
	where 
	\begin{eqnarray}
		R&=&\frac{1}{(2\pi  )^{2}}\int_\mathbb{R}\int_\mathbb{R}\int_\mathbb{R}\int_\mathbb{R}
		e^{\frac{2i}{h}(\eta z- y\zeta)}
		\left\{-
		\int^1_0\partial_x^2a(x+tz,\xi)(1-t)dt
		\partial_\eta^2 b (x+y,\xi+\eta)\right.\nonumber\\
		&&\left.+\int^1_0\int^1_0\partial_x\partial_\xi a(x+sz,\xi+t\zeta)  dsdt\partial_\eta\partial_y b (x+y,\xi+\eta)
		\right.\nonumber\\
		&&\left. -\int^1_0\partial_\xi^2a(x,\xi+t\zeta)(1-t) dt\partial_y^2 b (x+y,\xi+\eta)\right\}
		dyd\eta dz d\zeta.\notag
	\end{eqnarray}
\end{prop}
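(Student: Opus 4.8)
The plan is to begin from the exact oscillatory-integral formula for the composed symbol furnished by Proposition \ref{ab}, and to replace the factor $a(x+z,\xi+\zeta)$ by an \emph{exact} (error-free) second-order decomposition whose three quadratic pieces already carry the asymmetric evaluation points appearing in the statement. The clean way to produce those points is not the symmetric two-variable Taylor formula but the splitting
\begin{align*}
a(x+z,\xi+\zeta)={}&\big[a(x+z,\xi+\zeta)-a(x+z,\xi)-a(x,\xi+\zeta)+a(x,\xi)\big]\\
&+\big[a(x+z,\xi)-a(x,\xi)\big]+\big[a(x,\xi+\zeta)-a(x,\xi)\big]+a(x,\xi).
\end{align*}
Applying the fundamental theorem of calculus along straight segments to each bracket turns the mixed second difference into $z\zeta\int_0^1\int_0^1\partial_x\partial_\xi a(x+sz,\xi+t\zeta)\,ds\,dt$, and the two one-variable increments into $z\,\partial_x a(x,\xi)+z^2\int_0^1(1-t)\partial_x^2 a(x+tz,\xi)\,dt$ and $\zeta\,\partial_\xi a(x,\xi)+\zeta^2\int_0^1(1-t)\partial_\xi^2 a(x,\xi+t\zeta)\,dt$. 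This is exactly the affine part $a+z\partial_x a+\zeta\partial_\xi a$ plus the three quadratic kernels that will reassemble into $R$.

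Next I would insert this decomposition into the integral of Proposition \ref{ab} and treat the three groups separately, using the elementary identities (with $\Phi=\tfrac{2i}{h}(\eta z-y\zeta)$) $z\,e^{\Phi}=\tfrac{h}{2i}\partial_\eta e^{\Phi}$ and $\zeta\,e^{\Phi}=-\tfrac{h}{2i}\partial_y e^{\Phi}$. The constant term $a(x,\xi)$ factors out, and the surviving oscillatory integral collapses: integrating $e^{\frac{2i}{h}\eta z}$ in $z$ yields $\pi h\,\delta(\eta)$ and integrating $e^{-\frac{2i}{h}y\zeta}$ in $\zeta$ yields $\pi h\,\delta(y)$, so that after cancelling the prefactor $(\pi h)^{-2}$ one is left with $a(x,\xi)b(x,\xi)$. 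For the two affine terms I would integrate by parts once in $\eta$ (resp.\ $y$) to move the derivative onto $b$ and then collapse the $z,\zeta$ integrals as before; since $\partial_\eta b(x+y,\xi+\eta)=\partial_\xi b$ and $\partial_y b(x+y,\xi+\eta)=\partial_x b$, and since $-\tfrac{h}{2i}=\tfrac{ih}{2}$, this produces precisely $+\tfrac{ih}{2}\partial_x a\,\partial_\xi b$ and $-\tfrac{ih}{2}\partial_\xi a\,\partial_x b$.

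For the three quadratic terms I would iterate the same device, writing $z^2 e^{\Phi}=(\tfrac{h}{2i})^2\partial_\eta^2 e^{\Phi}$, $z\zeta\,e^{\Phi}=(\tfrac{h}{2i})(-\tfrac{h}{2i})\partial_\eta\partial_y e^{\Phi}$, and $\zeta^2 e^{\Phi}=(-\tfrac{h}{2i})^2\partial_y^2 e^{\Phi}$, and integrating by parts twice so that both derivatives fall on $b$; crucially the quadratic $a$-factors depend on $z,\zeta$ but \emph{not} on $\eta,y$, so they pass through the $\eta,y$ integrations untouched and remain inside the four-fold integral exactly as in $R$. The constants $(\tfrac{h}{2i})^2=-\tfrac{h^2}{4}$, $(\tfrac{h}{2i})(-\tfrac{h}{2i})=+\tfrac{h^2}{4}$, and $(-\tfrac{h}{2i})^2=-\tfrac{h^2}{4}$, once multiplied by the global $(\pi h)^{-2}$, become $-\tfrac{1}{(2\pi)^2}$, $+\tfrac{1}{(2\pi)^2}$, $-\tfrac{1}{(2\pi)^2}$, which reproduce both the signs and the prefactor $(2\pi)^{-2}$ of the claimed $R$ (with $\partial_\eta^2 b$, $\partial_\eta\partial_y b$, $\partial_y^2 b$ attached to the $\partial_x^2 a$, $\partial_x\partial_\xi a$, $\partial_\xi^2 a$ kernels respectively).

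The genuine obstacle is analytic rather than algebraic: all these manipulations must be read in the sense of oscillatory integrals, since before the $\delta$-collapse the $z$- and $\zeta$-integrals converge only conditionally. I would make the argument rigorous by inserting a Gaussian regularizer $\chi(\varepsilon z,\varepsilon\zeta,\varepsilon y,\varepsilon\eta)$, performing every integration by parts and every $\delta$-collapse on the regularized integrals — where Fubini, differentiation under the integral sign, and the vanishing of boundary terms are all legitimate because $a,b\in\mathcal{S}(\mathbb{R}^2)$ — and then letting $\varepsilon\to0$. This regularization, combined with the careful bookkeeping of the powers of $i$ and $h$, is the step that needs the most attention, but it is entirely standard for Schwartz symbols.
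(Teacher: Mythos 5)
Your proposal is correct: the mixed-second-difference splitting of $a(x+z,\xi+\zeta)$ combined with one-variable Taylor expansions with integral remainder produces exactly the asymmetric evaluation points $(x+tz,\xi)$, $(x+sz,\xi+t\zeta)$, $(x,\xi+t\zeta)$ appearing in the stated $R$, and your bookkeeping — $z\,e^{\Phi}=\frac{h}{2i}\partial_\eta e^{\Phi}$, $\zeta\,e^{\Phi}=-\frac{h}{2i}\partial_y e^{\Phi}$, the collapses $\int e^{\frac{2i}{h}\eta z}\,dz=\pi h\,\delta(\eta)$ and $\int e^{-\frac{2i}{h}y\zeta}\,d\zeta=\pi h\,\delta(y)$, and the prefactors $\pm\frac{h^2}{4}(\pi h)^{-2}=\pm(2\pi)^{-2}$ — checks out, with the regularization remark correctly handling the only delicate point (the quadratic $a$-kernels decay merely like $O(1/z)$, $O(1/\zeta)$, so everything must be read as oscillatory integrals). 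The paper itself gives no proof, quoting the result as Proposition 2.4 of \cite{Zhang}, but the asymmetric form of the remainder essentially forces your decomposition (a symmetric joint Taylor expansion would put all three second derivatives at $(x+tz,\xi+t\zeta)$), so your argument is in substance the cited one.
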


\begin{lem}\label{l0}
	Assume that $\Gamma_{0}(\xi)$ is a smooth function, satisfying $|\partial^\alpha \Gamma_{0}(\xi)|
	\le C_\alpha  <\xi>^{-1-\alpha}$ for any $\alpha \in \mathbb{N}$. Then we have 
	\begin{equation}\label{4131}
		\Gamma_{0}(\frac{x+F'(\xi)}{\sqrt{h}})\sharp\frac{x+F'(\xi)}{\sqrt{h}}=	\frac{x+F'(\xi)}{\sqrt{h}}\sharp \Gamma_{0}(\frac{x+F'(\xi)}{\sqrt{h}})=\Gamma_{0}(\frac{x+F'(\xi)}{\sqrt{h}})\frac{x+F'(\xi)}{\sqrt{h}}.
	\end{equation}
	In addition, if  $|\partial^\alpha (\xi\Gamma_0(\xi))|
	\le C_\alpha  <\xi>^{-1-\alpha}$ for any $\alpha \in \mathbb{N}$, then we have
	\begin{equation}\label{4132}
		\left(	\Gamma_{0}(\frac{x+F'(\xi)}{\sqrt{h}})\sharp\frac{x+F'(\xi)}{\sqrt{h}}\right)\sharp \frac{x+F'(\xi)}{\sqrt{h}}=\Gamma_{0}(\frac{x+F'(\xi)}{\sqrt{h}})(\frac{x+F'(\xi)}{\sqrt{h}})^2.
	\end{equation}
\end{lem}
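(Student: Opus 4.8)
The plan is to exploit two special features of the symbols at hand. Writing $p(x,\xi):=\frac{x+F'(\xi)}{\sqrt{h}}$ for brevity, observe first that since $F$ is the quadratic symbol (\ref{1.3}), $F'(\xi)=2c_2\xi+c_1$ is affine and $F''\equiv 2c_2$, $F'''\equiv 0$; consequently $p$ is an affine function of $(x,\xi)$ and every second-order derivative of $p$ vanishes identically. Second, in each product both factors are functions of the single quantity $p$: in (\ref{4131}) they are $\Gamma_0(p)$ and $p$, and in (\ref{4132}), after the first identity, they are $\Gamma_0(p)\,p=G(p)$ with $G(\xi):=\xi\Gamma_0(\xi)$, and $p$. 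These two facts are exactly what force the Moyal product to collapse to the pointwise product.

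To prove (\ref{4131}) I would apply Proposition \ref{P2.3-0} with $a=\Gamma_0(p)$ and $b=p$. The first-order term is the Poisson bracket $\partial_x a\,\partial_\xi b-\partial_\xi a\,\partial_x b$; by the chain rule $\partial_x a=\Gamma_0'(p)\,\partial_x p$ and $\partial_\xi a=\Gamma_0'(p)\,\partial_\xi p$, so this equals $\Gamma_0'(p)\bigl(\partial_x p\,\partial_\xi p-\partial_\xi p\,\partial_x p\bigr)=0$. More generally the Poisson bracket of any two functions of $p$ vanishes, which simultaneously handles the reversed order $p\,\sharp\,\Gamma_0(p)$. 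For the remainder $R$ I would note that each of its three terms is a product of a second-order derivative of $a$ and a second-order derivative of $b$; since one of the two factors is always $p$ and all second derivatives of $p$ are zero, every term in $R$ vanishes. Hence $\Gamma_0(p)\sharp p=\Gamma_0(p)\,p$, and by the same reasoning $p\sharp\Gamma_0(p)=\Gamma_0(p)\,p$, which is (\ref{4131}).

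For (\ref{4132}) I would first rewrite the bracketed factor via (\ref{4131}) as $\Gamma_0(p)\sharp p=\Gamma_0(p)\,p=G(p)$, and then apply the identical argument to $G(p)\sharp p$: both $G(p)$ and $p$ are functions of $p$, so the Poisson bracket vanishes, and one factor is $p$, so $R=0$. This gives $G(p)\sharp p=G(p)\,p=\Gamma_0(p)\,p^2$, which is (\ref{4132}). The extra hypothesis $|\partial^\alpha(\xi\Gamma_0(\xi))|\le C_\alpha\langle\xi\rangle^{-1-\alpha}$ enters precisely here: it guarantees that $G(p)=\Gamma_0(p)\,p$ still enjoys the decay required for the calculus to apply, exactly as $\Gamma_0$ did in the first part.

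The algebraic cancellations above are immediate; the one point that needs genuine care is that Propositions \ref{ab} and \ref{P2.3-0} are stated for Schwartz symbols, whereas $p$ grows linearly and $\Gamma_0(p)$ is merely bounded (it is essentially constant along $M=\{x+F'(\xi)=0\}$), so neither symbol lies in $\mathcal{S}(\mathbb{R}^2)$. I would resolve this by working in the order-function symbol classes of \cite{D-S,Zworski}: the decay hypotheses place $\Gamma_0(p)\in S(\langle p\rangle^{-1})$ and $G(p)\in S(\langle p\rangle^{-1})$ while $p\in S(\langle p\rangle)$, and the composition formula extends to such classes by continuity. Alternatively one can argue directly from the oscillatory-integral definition in Proposition \ref{ab}: because $F'$ is affine, $p(x+y,\xi+\eta)=p(x,\xi)+\frac{y+2c_2\eta}{\sqrt{h}}$ is itself affine in the integration variables, so the fourfold integral collapses exactly to the pointwise product plus the single Poisson-bracket term, with the decay of $\Gamma_0(p)$ ensuring convergence. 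Justifying the calculus on this symbol class is the main—and essentially the only—obstacle.
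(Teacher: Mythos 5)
Your proof is correct and follows essentially the same route as the paper: apply Proposition \ref{P2.3-0}, note that the Poisson bracket of two functions of $p=\frac{x+F'(\xi)}{\sqrt h}$ vanishes and that the remainder $R$ vanishes because every one of its terms contains a second derivative of the affine symbol $p$, and then obtain (\ref{4132}) by applying (\ref{4131}) twice with $G(\xi)=\xi\Gamma_0(\xi)$, exactly as the paper does. Your closing remark about extending the composition calculus beyond Schwartz symbols (via order-function symbol classes) addresses a point the paper silently glosses over; it is a refinement of the same argument, not a different one.
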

\begin{proof}
	An application of Proposition \ref{P2.3-0} yields 
	\begin{eqnarray}
		&&	\Gamma_{0}(\frac{x+F'(\xi)}{\sqrt{h}})\sharp \frac{x+F'(\xi)}{\sqrt{h}}\notag\\
		&=& \Gamma_{0}(\frac{x+F'(\xi)}{\sqrt{h}})  \frac{x+F'(\xi)}{\sqrt{h}}+\frac{ih}{2}\left(\partial_x\Gamma_{0}(\frac{x+F'(\xi)}{\sqrt{h}})\partial_\xi (\frac{x+F'(\xi)}{\sqrt{h}})\right. \notag\\
		&&\left.\qquad-\partial_\xi\Gamma_{0}(\frac{x+F'(\xi)}{\sqrt{h}})\partial_x (\frac{x+F'(\xi)}{\sqrt{h}})\right)+R\notag\\
		&=& \Gamma_{0}(\frac{x+F'(\xi)}{\sqrt{h}}) (\frac{x+F'(\xi)}{\sqrt{h}}),\notag
	\end{eqnarray}
	where  $R=0$, since $\partial_{\xi\xi }(\frac{x+F'(\xi)}{\sqrt{h}})=\partial_{xx} (\frac{x+F'(\xi)}{\sqrt{h}})=\partial_{x\xi} (\frac{x+F'(\xi)}{\sqrt{h}})=0$. 
	Similarly, we can show that 
	\begin{eqnarray}
		\frac{x+F'(\xi)}{\sqrt{h}}\sharp \Gamma_{0}(\frac{x+F'(\xi)}{\sqrt{h}})=\Gamma_{0}(\frac{x+F'(\xi)}{\sqrt{h}}) (\frac{x+F'(\xi)}{\sqrt{h}}).\notag
	\end{eqnarray}This completes the proof of  (\ref{4131}).  Finally, (\ref{4132}) follows easily from (\ref{4131}): 
	\begin{eqnarray}
		\Gamma_0(\frac{x+F'(\xi)}{\sqrt h})(\frac{x+F'(\xi)}{\sqrt h})^2&=&\left(\Gamma_{0}(\frac{x+F'(\xi)}{\sqrt h})\frac{x+F'(\xi)}{\sqrt h}\right)\sharp \frac{x+F'(\xi)}{\sqrt h}\notag\\
		&=&\left(\Gamma_{0}(\frac{x+F'(\xi)}{\sqrt h})\sharp \frac{x+F'(\xi)}{\sqrt h}\right)\sharp \frac{x+F'(\xi)}{\sqrt h}.\notag
	\end{eqnarray}
\end{proof}

\section{Proof of Theorem \ref{T1.1}}\label{S3}
In this section, we prove  the   global existence and uniqueness results in Theorem \ref{T1.1}. 
We will use the following  lemmas. 
\begin{lem}\label{ll1}
	Assume $f:[1,T]\times \mathbb{R}\rightarrow \mathbb{C},\ T>1$ is a smooth function, there exists a positive constant $C$ independent of $T, f$ such that for all $t\in[1,T]$
	\begin{equation}
		\left\|f(t,x)\right\|_{L^\infty _x}\le Ct^{-1/2}\left\|f(t,x)\right\|_{L^2 _x}^{1/2}\left\|\mathcal{L}f(t,x)\right\|_{L^2 _x}^{1/2},\label{4133}
	\end{equation}
	\begin{equation}
		\|\mathcal{L} f(t,x)\|_{L^4_x}\le C\|f(t,x)\|_{L^\infty _x}^{1/2}\|\mathcal{L} ^2f(t,x)\|_{L^2_x}^{1/2}.\label{4134}
	\end{equation}
\end{lem}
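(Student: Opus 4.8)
The plan is to conjugate everything by the linear propagator $U(t):=e^{itF(D)}$ of $(D_t-F(D))u=0$ and reduce to one-dimensional Gagliardo--Nirenberg inequalities. The key algebraic fact is the conjugation identity $\mathcal{L}=U(t)\,x\,U(-t)$. To see it, set $X(t)=U(-t)xU(t)$ and differentiate: since $F(D)$ commutes with $U(\pm t)$ and $[x,F(D)]=iF'(D)$, one gets $\frac{d}{dt}X(t)=U(-t)\,i[x,F(D)]\,U(t)=U(-t)(-F'(D))U(t)=-F'(D)$, whence $U(-t)xU(t)=x-tF'(D)$ and therefore $U(-t)\mathcal{L}U(t)=(x-tF'(D))+tF'(D)=x$. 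Writing $g:=U(-t)f(t,\cdot)$, this furnishes the relations $f=U(t)g$, $\mathcal{L}f=U(t)(xg)$, $\mathcal{L}^2f=U(t)(x^2g)$, together with the $L^2$-isometries $\|f\|_{L^2}=\|g\|_{L^2}$ and $\|\mathcal{L}^kf\|_{L^2}=\|x^kg\|_{L^2}$.

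For the first estimate I would combine the dispersive bound with an interpolation of the $L^1$ norm. Because $F$ is a nondegenerate quadratic ($c_2>0$), the kernel of $U(t)$ is an explicit Fresnel integral whose modulus equals $(4\pi c_2 t)^{-1/2}$ uniformly in the spatial variables, giving $\|U(t)g\|_{L^\infty}\le Ct^{-1/2}\|g\|_{L^1}$ with $C$ depending only on $c_2$. Splitting $\int|g|$ over $\{|x|<R\}$ and $\{|x|\ge R\}$, applying Cauchy--Schwarz on each piece and optimizing in $R$ yields $\|g\|_{L^1}\le C\|g\|_{L^2}^{1/2}\|xg\|_{L^2}^{1/2}$. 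Chaining these with the isometries $\|g\|_{L^2}=\|f\|_{L^2}$ and $\|xg\|_{L^2}=\|\mathcal{L}f\|_{L^2}$ gives exactly (\ref{4133}).

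For the second estimate I would instead use the factorization of $U(t)$ to convert $\mathcal{L}$ into an honest derivative. Writing $U(t)=e^{itc_0}e^{itc_1D}e^{itc_2D^2}$, the scalar phase and the translation $e^{itc_1D}$ preserve every $L^p$ norm and every pointwise modulus, while the Schr\"odinger factor admits the classical decomposition $M\,\mathcal{D}\,\mathcal{F}\,M$ into a unimodular multiplier, a dilation with factor linear in $t$, the Fourier transform, and another unimodular multiplier. Tracking moduli through this decomposition and using $\mathcal{F}(x^k\,\cdot\,)=(i\partial_\xi)^k\mathcal{F}$, one obtains a single profile $G$ (the Fourier transform of $g$ times a unimodular weight) with $\|f\|_{L^\infty}=t^{-1/2}\|G\|_{L^\infty}$, $\|\mathcal{L}f\|_{L^4}=t^{-1/4}\|G'\|_{L^4}$ and $\|\mathcal{L}^2f\|_{L^2}=\|G''\|_{L^2}$. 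It then suffices to invoke the scalar endpoint inequality $\|G'\|_{L^4}\le C\|G\|_{L^\infty}^{1/2}\|G''\|_{L^2}^{1/2}$ (the case $\theta=1/2$ of Gagliardo--Nirenberg in one dimension); substituting the three displayed identities, the powers of $t$ cancel exactly and (\ref{4134}) follows.

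The routine ingredients are the explicit kernel computation, the $L^1$ interpolation, and the scalar Gagliardo--Nirenberg inequality. The step requiring the most care --- and the main obstacle --- is the bookkeeping of $t$-powers in the factorization for a general quadratic $F$ rather than the pure Schr\"odinger symbol: one must check that the extra translation $e^{itc_1D}$ and the rescaling induced by $c_2$ affect only the constants and not the exponents of $t$, so that the cancellation producing the $t$-free inequality (\ref{4134}) survives and the constant $C$ is independent of $T$ and $f$. Since translations and unimodular multipliers are isometries on all $L^p$ and only the dilation (with factor proportional to $t$) contributes powers of $t$, this verification goes through, but it is the point at which the argument must be written out carefully.
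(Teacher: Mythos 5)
Your proposal is correct, but it follows a different route from the paper's proof, which is shorter and avoids the propagator entirely. The paper introduces the explicit quadratic phase $\phi(t,x)=\frac{x^2+2c_1tx}{4c_2t}$ and verifies the two pointwise identities $-2c_2it\,\partial_x\bigl(fe^{i\phi}\bigr)=e^{i\phi}\mathcal{L}f$ and $-4c_2^2t^2\,\partial_{xx}\bigl(fe^{i\phi}\bigr)=e^{i\phi}\mathcal{L}^2f$; then (\ref{4133}) follows by applying the Gagliardo--Nirenberg inequality $\|g\|_{L^\infty}\le C\|g\|_{L^2}^{1/2}\|\partial_xg\|_{L^2}^{1/2}$ to $g=fe^{i\phi}$, and (\ref{4134}) by applying $\|\partial_xg\|_{L^4}\le C\|g\|_{L^\infty}^{1/2}\|\partial_{xx}g\|_{L^2}^{1/2}$ to the same $g$, the powers of $t$ cancelling between the two identities. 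Your argument replaces this phase conjugation by conjugation with the full group: the identity $\mathcal{L}=U(t)xU(-t)$ (your commutator computation $[x,F(D)]=iF'(D)$ is correct with the paper's convention $D=-i\partial_x$), the dispersive bound $\|U(t)g\|_{L^\infty}\le C t^{-1/2}\|g\|_{L^1}$ plus the weighted interpolation $\|g\|_{L^1}\le C\|g\|_{L^2}^{1/2}\|xg\|_{L^2}^{1/2}$ for (\ref{4133}), and the $M\mathcal{D}\mathcal{F}M$ factorization for (\ref{4134}); I checked your $t$-bookkeeping and it does close, with $\|f\|_{L^\infty}=(2c_2t)^{-1/2}\|G\|_{L^\infty}$, $\|\mathcal{L}f\|_{L^4}=(2c_2t)^{-1/4}\|G'\|_{L^4}$, $\|\mathcal{L}^2f\|_{L^2}=\|G''\|_{L^2}$, so both the $t$- and $c_2$-powers cancel exactly against the same endpoint Gagliardo--Nirenberg inequality the paper uses. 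The two proofs are of course related --- the paper's $e^{i\phi}$ is precisely the $M$-factor of your factorization, so the paper in effect strips off the Fourier transform and dilation (which are $L^p$-harmless) and keeps only the part of the conjugation that matters --- but yours invokes heavier machinery (explicit kernel, dispersive estimate, Fourier-side profile), whereas the paper's is purely pointwise and two lines long. What your route buys is generality and familiarity: $U(t)xU(-t)$ is the standard Hayashi--Naumkin vector field, and the dispersive-plus-$L^1$-interpolation proof of (\ref{4133}) would survive in settings where an explicit conjugating phase is not available.
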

\begin{proof}
	To begin with, it is useful to introduce a certain phase function. Let 
	\begin{equation}
		\phi(t,x)=\frac{x^2+2c_1tx}{4c_2t}.\notag
	\end{equation}
	Since 
	\begin{equation}
		\mathcal{L}=x+tF'(D)=x+c_1t-2c_2it\partial_x,\label{1234}
	\end{equation}
	it is straightforward to check that 
	\begin{equation}
		-2c_2it \partial_x (f(t,x)e^{i\phi(t,x)})=e^{i\phi(t,x)}\mathcal{L} f(t,x), \label{4135}
	\end{equation}
	\begin{equation}
		-4c_2^2t^2 \partial_{xx} (f(t,x)e^{i\phi(t,x)})=e^{i\phi(t,x)}\mathcal{L}^2 f(t,x).\label{4136}
	\end{equation}
	From (\ref{4135}), we have, for $t\in [1,T]$,  
	\begin{eqnarray}\label{45200}
		\left\|\partial_x (f(t,x)e^{i\phi(t,x)})\right\|_{L_x^2}\le \frac{C}{t}\left\|\mathcal{L}f(t,x)\right\|_{L_x^2}.
	\end{eqnarray}
	On the other hand, using Gagliardo-Nirenberg's inequality, we obtain 
	\begin{eqnarray}
		\left\|f(t,x)\right\|_{L^\infty _x}=\left\|f(t,x)e^{i\phi(t,x)}\right\|_{L^\infty _x}\le C\left\|f(t,x)e^{i\phi(t,x)}\right\|_{L_x^2}^{1/2}\left\|\partial_x (f(t,x)e^{i\phi(t,x)})\right\|_{L_x^2}^{1/2}.\notag
	\end{eqnarray}
	This  together with (\ref{45200}) yields (\ref{4133}). 
	
	Similarly, it follows from  (\ref{4135}) and Gagliardo-Nirenberg's inequality that 
	\begin{eqnarray}
		\|\mathcal{L} f(t,x)\|_{L_x^4}\le  Ct \|f(t,x)e^{i\phi(t,x)}\|_{L^\infty }^{1/2}\|\partial_{xx}(f(t,x)e^{i\phi(t,x)})\|_{L^2}^{1/2},\notag
	\end{eqnarray}
	which together with  (\ref{4136}) gives the desired estimate  (\ref{4134}). 
\end{proof}

Using the classical energy estimate method, we obtain the following lemma easily and omit the details.
\begin{lem}\label{l4.1}
	Assume  $\text{Im} \lambda >0$, and $u\in C([0,T]; L^2)$ is a  solution of (\ref{1.1}), then we have
	\begin{equation}
		\|u(t,\cdot)\|_{L^2}\le\|u_0\|_{L^2}, \ t\in[0,T].\notag
	\end{equation}
\end{lem}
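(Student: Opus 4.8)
The plan is to differentiate the $L^2$ identity already recorded in (\ref{141}) and read off monotonicity from the sign of $\text{Im}\,\lambda$. First I would put (\ref{1.1}) in evolution form: since $D_t=\partial_t/i$, the equation is equivalent to $\partial_t u = iF(D)u + i\lambda|u|^\alpha u$. The key structural observation is that $F(D)$ is the Fourier multiplier with the \emph{real} symbol $F(\xi)=c_2\xi^2+c_1\xi+c_0$, so $F(D)$ is (formally) self-adjoint on $L^2$ and $iF(D)$ is skew-adjoint; it therefore cannot change the $L^2$ norm.

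Next I would compute the time derivative of the squared norm by pairing the evolution equation against $u$ and taking real parts:
\begin{equation}
	\frac{1}{2}\frac{d}{dt}\|u(t)\|_{L^2}^2=\text{Re}\int_{\mathbb{R}}\overline{u}\,\partial_t u\,dx=\text{Re}\left(i\int_{\mathbb{R}}F(D)u\,\overline{u}\,dx\right)+\text{Re}(i\lambda)\int_{\mathbb{R}}|u|^{\alpha+2}\,dx.\notag
\end{equation}
By Plancherel the linear pairing equals $\int_{\mathbb{R}}F(\xi)|\mathcal{F}u(\xi)|^2\,d\xi$, which is real because $F(\xi)$ is real; hence its product with $i$ is purely imaginary and contributes nothing. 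Writing $\lambda=\lambda_1+i\lambda_2$ one has $\text{Re}(i\lambda)=-\lambda_2=-\text{Im}\,\lambda$, so that
\begin{equation}
	\frac{d}{dt}\|u(t)\|_{L^2}^2=-2\,\text{Im}\,\lambda\,\|u(t)\|_{L^{\alpha+2}}^{\alpha+2}\le0,\notag
\end{equation}
using $\text{Im}\,\lambda>0$. Integrating in time recovers (\ref{141}) and yields the stated bound $\|u(t)\|_{L^2}\le\|u_0\|_{L^2}$ for all $t\in[0,T]$.

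The only real obstacle is one of rigor rather than of substance: for a solution merely in $C([0,T];L^2)$ the term $F(D)u$ need not lie in $L^2$, so the pairing above and the differentiation under the integral sign are not immediately licit. I would remove this by a standard regularization, applying a Friedrichs mollifier (or a frequency cutoff) $J_\varepsilon$ to the equation, establishing the identity for the smooth quantity $\|J_\varepsilon u\|_{L^2}^2$, where the skew-adjointness of $iF(D)$ together with a commutator estimate makes the regularized linear term vanish in the limit, and then letting $\varepsilon\to0$ using the $L^2$-continuity of $u$ and the integrability of the nonlinearity supplied by the solution class. Since only the \emph{sign} of the dissipative term is needed — no quantitative control of any remainder — this argument is entirely routine, which is why the details may be omitted.
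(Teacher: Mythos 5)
Your proof is correct and takes essentially the same approach the paper intends: the paper omits the details, saying only that the lemma follows "easily" by the classical energy estimate method, and your computation is exactly that argument — pairing the equation with $\overline{u}$, using that $iF(D)$ is skew-adjoint since $F(\xi)$ is real, and reading off the sign from $\text{Re}(i\lambda)=-\text{Im}\,\lambda$, which reproduces the dissipation identity (\ref{141}) adapted to (\ref{1.1}). Your closing mollification remark correctly handles the only rigor issue (solutions merely in $C([0,T];L^2)$), a routine point the paper likewise suppresses.
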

\begin{proof}[\textbf{Proof of Theorem \ref{T1.1}}]
	For the initial datum $u_0\in H^{0,1}$, the  existence and uniqueness  of a  local strong $L^2$  solution  to the Cauchy problem (\ref{1.1}) easily follow from  Strichartz's estimate 
	\begin{eqnarray}
		\left\|u\right\|_{L^\infty L^2\cap L^4 L^\infty }\le C\left\|u_0\right\|_{L^2}+C\left\||u|^\alpha u\right\|_{L^1L^2}, \notag
	\end{eqnarray}
	and a standard contraction argument. Moreover,  by applying Lemma \ref{l4.1},  we can extend this local solution to  $[0,\infty  ) $ easily and  omit the details. 
	
	In what follows, we prove the estimate (\ref{3192})--(\ref{491}) and thus completing the proof of Theorem \ref{T1.1}.
	
	\noindent \textbf{Estimate of $\left\|\mathcal{L}u\right\|_{L^2}$.} 
	Applying the operator ${\mathcal{L}}$ to  (\ref{1.1}),  and using the fundamental commutation property $
	[D_t-F(D),\mathcal{L}]=0$, we get 
	\begin{equation}
		(D_t-F(D))\mathcal{L}u=\lambda \mathcal{L}(\left|u\right|^\alpha u).\notag
	\end{equation}
	This implies 
	\begin{equation}
		\frac{1}{2}	\frac{d}{dt}\|{\mathcal{L}} u\|_{L^2}^2=-\text{Im} \int_{\mathbb{R}}  \lambda  {\mathcal{L}}(\left|u\right|^\alpha u)\overline{{\mathcal{L}}u} \mathrm{d}x.\notag
	\end{equation}
	On the other hand, from the expressions of  $\mathcal{L} $ in  (\ref{1234}),  it is straightforward to check that 
	\begin{eqnarray}\label{z1}
		\mathcal{L}(\left|u\right|^\alpha u)=\frac{\alpha +2}{2}\left|u\right|^{\alpha }	\mathcal{L}u-\frac{\alpha }{2}\left|u\right|^{\alpha -2}u^2\overline{	\mathcal{L} u};
	\end{eqnarray}
	and that for $\alpha \ge1$,
	\begin{eqnarray}\label{z2}
		\mathcal{L}^2(|u|^\alpha u)&=&\frac{\alpha +2}{2}|u|^\alpha \mathcal{L}^2u+\frac{\alpha }{2}|u|^{\alpha -2}u^2\overline{\mathcal{L}^2u}+\frac{(\alpha +2)\alpha }{2}|u|^{\alpha -2}\text{Im} (\mathcal{L}u \overline{u})\mathcal{L}u \notag\\
		&&-\frac{\alpha (\alpha -2)}{2}|u|^{\alpha -4}u^2 \text{Im} (\mathcal{L}u \overline{u})\overline{\mathcal{L}u}-\alpha |u|^{\alpha -2}u|\mathcal{L}u|^2.
	\end{eqnarray}
	From (\ref{z1}) and the large dissipative condition (\ref{lambda1}), we have   for all $t\ge0$, 
	\begin{eqnarray}
		&&-\text{Im} \left(\lambda {\mathcal{L}}(\left|u\right|^\alpha u)\overline{{\mathcal{L}}u}\right)\notag\\
		&=&-\text{Im} \lambda \frac{\alpha +2}{2}\left|u\right|^\alpha \left| {\mathcal{L} }u\right|^2+\text{Im} (\lambda \frac{\alpha }{2}\left|u\right|^{\alpha -2}u^2\overline{{\mathcal{L} }u}^2)\notag\\
		&\le& (-\frac{\alpha +2}{2}\lambda _2+\frac{\alpha }{2}|\lambda |)\left|u\right|^\alpha |{\mathcal{L} }u|^2\le0.\label{151}
	\end{eqnarray}
	Therefore, for all $t\ge0$, we have 
	\begin{equation}\label{3234}
		\|{\mathcal{L}}u(t)\|_{L^2}\le \left\|{{\mathcal L}}u(0)\right\|_{L^2}=\left\|xu_0\right\|_{L^2},
	\end{equation}
	which together with Lemma \ref{l4.1} yields (\ref{3192}). 
	
\noindent\textbf{Estimate of $\left\|\mathcal{L}^2u\right\|_{L^2}$. }  Using the commutation relation 	$[D_t-F(D),\mathcal{L}^2]=0$, we get 
	\begin{eqnarray}
		\frac{1}{2}\frac{d}{dt}\left\|\mathcal{L}^2u\right\|_{L^2}^2=-\text{Im}  \int_{\mathbb{R}}\lambda \mathcal{L}^2(|u|^\alpha u)\cdot \overline{\mathcal{L}^2u} \mathrm{d}x.\label{432}
	\end{eqnarray}
	From (\ref{z2}) and  the large dissipative condition  (\ref{lambda1}), we have that  for all $t\ge0$,  
	\begin{eqnarray}
		&&-\text{Im}  \int_{\mathbb{R}}\lambda \mathcal{L}^2(|u|^\alpha u)\cdot \overline{\mathcal{L}^2u} \mathrm{d}x\notag\\
		&\le& -\lambda _2\frac{\alpha +2}{2}\int_{\mathbb{R}}|u|^\alpha |\mathcal{L}^2u|^2 \mathrm{d}x+\frac{\alpha }{2}|\lambda | \lambda \int_{\mathbb{R}}|u|^{\alpha }|\mathcal{L} ^2 u|^2 \mathrm{d}x\notag\\
		&&+C\int_{\mathbb{R}}|u|^{\alpha -1}|\mathcal{L}u|^2|\mathcal{L}^2u| \mathrm{d}x\notag\\
		&\le&C\left\|u\right\|_{L^\infty }^{\alpha -1}\left\|\mathcal{L}u\right\|_{L^\infty }\left\|\mathcal{L}u\right\|_{L^2}\left\|\mathcal{L}^2u\right\|_{L^2}.\notag
	\end{eqnarray}
	An application of  Lemma \ref{ll1} then yields 
	\begin{eqnarray}
		&&-\text{Im}  \int_{\mathbb{R}}\lambda\mathcal{L}^2(|u|^\alpha u)\cdot \overline{\mathcal{L}^2u} \mathrm{d}x\notag\\
		&\le& Ct^{-\alpha /2}\left\|u\right\|_{L^2}^{\frac{\alpha -1}{2}}\left\|\mathcal{L}u\right\|_{L^2}^{\frac{\alpha +2}{2}}\left\|\mathcal{L}^2u\right\|_{L^2}^{\frac{3}{2}}\notag\\
		&\le& Ct^{-\alpha /2}\left\|u_0\right\|_{H^{0,1}}^{\alpha +\frac{1}{2}}\left\|\mathcal{L}^2u\right\|_{L^2}^{\frac{3}{2}},\label{433}
	\end{eqnarray}
	where the last inequality holds by applying (\ref{3234}) and Lemma \ref{l4.1}. Substituting (\ref{433}) into  (\ref{432}), we get   
	\begin{eqnarray}
		\frac{d}{dt}\left\|\mathcal{L}^2u\right\|_{L^2}^{1/2}\le Ct^{-\alpha /2}\left\|u_0\right\|_{H^{0,1}}^{\alpha +1/2}.\notag
	\end{eqnarray}
	Hence
	\begin{eqnarray}
		\left\|\mathcal{L}^2u\right\|_{L^2}\le \left\|u_0\right\|_{H^{0,2}}+Ct^{2-\alpha }\left\|u_0\right\|_{H^{0,1}}^{2\alpha +1},\label{0451}
	\end{eqnarray}
	which yields (\ref{491}). 
	
	Finally,  from Lemma \ref{ll1}, Lemma \ref{l4.1} and (\ref{3234}),  the desired decay estimate (\ref{ldecay}) follows. 
\end{proof}

\section{The proof of  Theorem \ref{T1.2}}\label{S5}
This section is devoted to proving   Theorem \ref{T1.2}.  It is organized in two subsections. 

In the first one, we make first a  semiclassical change of variables (\ref{4.2}) and then prove in Lemma \ref{l4.3-00}  that  $v_{\Lambda ^c}$ can be considered as a remainder. In addition,  we derive  the ODE (\ref{4.34}) for  $v_{\Lambda }$ and  then estimate  the remainders  $R_1(v),\ R_2(v)$ in the rest of this subsection.

 In the second one, we use the ODE (\ref{4.34}) to derive the uniform for  $v_{\Lambda }$ and then in the solution  $u$, combining the bootstrap and the  contradiction argument.  
\subsection{Semiclassical reduction of the problem}\label{sub1}
We rewrite the problem in the semiclassical framework. Set
\begin{equation}\label{4.2}
	u(t,x)=\frac{1}{\sqrt{t}}v(t,\frac{x}{t}),\ h=\frac{1}{t}.
\end{equation}
Then the equation (\ref{1.1}) is rewritten  as
\begin{equation}
	(D_t-G_h^w(x\xi+F(\xi)))v=\lambda t^{-\alpha /2}|v|^\alpha v.\label{3.4-000}
\end{equation}
At the same time, set
\begin{equation}
	\widetilde{\mathcal{L}}=\frac{1}{h}G_h^w(x+F'(\xi)).\label{3.4}
\end{equation}
Indeed,  since $F'(\xi)=2c_2\xi+c_1$, we have that  $	\widetilde{\mathcal{L}}=(x+c_1)t+2c_2D_x$ so  that  
\begin{equation}\label{4103}
	\mathcal{L} u(t,x)=\frac{1}{\sqrt t}(\widetilde{\mathcal{L}}v)(t,\frac{x}{t}),\qquad \mathcal{L} ^2u(t,x)=\frac{1}{ \sqrt {t}}(\widetilde{\mathcal{L} }^2v)(t,\frac{x}{t}).
\end{equation}
Moreover, one has   
\begin{equation}
	\|u(t,\cdot)\|_{L^2}=\|v(t,\cdot)\|_{L^2},\  
	\  \|u(t,\cdot)\|_{L^\infty}=t^{-1/2}\|v(t,\cdot)\|_{L^\infty}, \label{3.4-0}
\end{equation}
and 
\begin{equation}\label{4104}
	\|\mathcal{L}u(t,\cdot)\|_{L^2}=\|\widetilde{\mathcal{L}}v(t,\cdot)\|_{L^2}, \ \left\|\mathcal{L} ^2u(t,\cdot)\right\|_{L^2}=\|\widetilde{\mathcal{L} }^2v(t,\cdot)\|_{L^2}.
\end{equation}
Therefore the proof of Theorem \ref{T1.2} reduces to estimate $\left\|v(t,\cdot)\right\|_{L^\infty }$. To do so,  we decompose  $v=v_\Lambda+v_{\Lambda^c}$ with
\begin{equation}\label{3194}
	v_\Lambda=G^w_h(\Gamma) v,
\end{equation}
where $\Gamma(x,\xi)=\gamma(\frac{x+F'(\xi)}{\sqrt{h}})$ with $\gamma\in C^\infty_0(\mathbb{R})$ satisfying that $\gamma\equiv1$ in a neighbourhood of zero.

We have the following $L^\infty-$estimates for $v_{\Lambda^c}$, which shows that  $v_{\Lambda }$ can be considered as a small perturbation of  $v$. 
\begin{lem}\label{l4.3-00}
	Assume  $u_0\in H^{0,1}$.  Then for all $t\ge1$,
	\begin{equation}
		\|v_{\Lambda^c}(t,\cdot)\|_{L^\infty}\leq C\|u_0\|_{H^{0,1}}t^{-1/4} ,\ \|v_{\Lambda^c}(t,\cdot)\|_{L^2}\leq  C\|u_0\|_{H^{0,1}}t^{-1/2}. \notag
	\end{equation}
\end{lem}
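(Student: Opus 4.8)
The plan is to exploit that the cutoff symbol $1-\gamma\!\big(\tfrac{x+F'(\xi)}{\sqrt h}\big)$ vanishes on a neighbourhood of the set $\{x+F'(\xi)=0\}$, so that it is divisible by $\tfrac{x+F'(\xi)}{\sqrt h}$; extracting this factor trades one power of $\widetilde{\mathcal L}$ (equivalently of $\mathcal L$) for the uniform bound (\ref{3192}) of Theorem \ref{T1.1}. Concretely, set $b(s)=\tfrac{1-\gamma(s)}{s}$. Since $\gamma\equiv1$ near $0$ the numerator vanishes identically there, so $b$ is smooth on all of $\mathbb R$; and since $\gamma\in C_0^\infty$, $b(s)=1/s$ for $|s|$ large, whence $|\partial^\alpha b(s)|\le C_\alpha <s>^{-1-\alpha}$ for every $\alpha\in\mathbb N$. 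Thus $b$ is an admissible symbol for both Lemma \ref{l0} and Proposition \ref{P2.6}, and one has the pointwise identity
\begin{equation}
1-\gamma\Big(\tfrac{x+F'(\xi)}{\sqrt h}\Big)=b\Big(\tfrac{x+F'(\xi)}{\sqrt h}\Big)\cdot\tfrac{x+F'(\xi)}{\sqrt h}.\notag
\end{equation}

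Next I would turn this pointwise product into a composition of operators. By (\ref{4131}) of Lemma \ref{l0} (applied with $\Gamma_0=b$) the pointwise product equals the Moyal product $b\big(\tfrac{x+F'(\xi)}{\sqrt h}\big)\sharp \tfrac{x+F'(\xi)}{\sqrt h}$, and Proposition \ref{ab} then gives
\begin{equation}
G_h^w\Big(1-\gamma\big(\tfrac{x+F'(\xi)}{\sqrt h}\big)\Big)=G_h^w\Big(b\big(\tfrac{x+F'(\xi)}{\sqrt h}\big)\Big)\circ G_h^w\Big(\tfrac{x+F'(\xi)}{\sqrt h}\Big).\notag
\end{equation}
From the definition (\ref{3.4}) of $\widetilde{\mathcal L}$ one has $G_h^w\big(\tfrac{x+F'(\xi)}{\sqrt h}\big)=\tfrac1{\sqrt h}G_h^w(x+F'(\xi))=\sqrt h\,\widetilde{\mathcal L}$, so that
\begin{equation}
v_{\Lambda^c}=G_h^w\Big(1-\gamma\big(\tfrac{x+F'(\xi)}{\sqrt h}\big)\Big)v=\sqrt h\,G_h^w\Big(b\big(\tfrac{x+F'(\xi)}{\sqrt h}\big)\Big)\,\widetilde{\mathcal L}v.\notag
\end{equation}

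Finally I would apply the operator-norm bounds of Proposition \ref{P2.6} to $G_h^w\big(b(\tfrac{x+F'(\xi)}{\sqrt h})\big)$, namely $O(h^{-1/4})$ from $L^2$ to $L^\infty$ and $O(1)$ from $L^2$ to $L^2$, together with $\|\widetilde{\mathcal L}v\|_{L^2}=\|\mathcal L u\|_{L^2}\le C\|u_0\|_{H^{0,1}}$, which is exactly (\ref{4104}) combined with (\ref{3192}). Since $h=1/t$ this yields
\begin{equation}
\|v_{\Lambda^c}\|_{L^\infty}\le C\sqrt h\,h^{-1/4}\|\widetilde{\mathcal L}v\|_{L^2}\le C\|u_0\|_{H^{0,1}}h^{1/4}=C\|u_0\|_{H^{0,1}}t^{-1/4},\notag
\end{equation}
and likewise $\|v_{\Lambda^c}\|_{L^2}\le C\sqrt h\,\|\widetilde{\mathcal L}v\|_{L^2}\le C\|u_0\|_{H^{0,1}}t^{-1/2}$. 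The only delicate point is the first step: one must verify that $b(s)=(1-\gamma(s))/s$ genuinely lies in the symbol class of Proposition \ref{P2.6} (smoothness across $s=0$ and the $<s>^{-1-\alpha}$ decay), and that the Moyal identities of Lemma \ref{l0}/Proposition \ref{ab}, stated for Schwartz symbols, remain valid for the non-Schwartz symbols $b$ and $\tfrac{x+F'(\xi)}{\sqrt h}$ at hand — which they do, the remainder in Lemma \ref{l0} vanishing precisely because $x+F'(\xi)$ is affine in $(x,\xi)$. Everything after that is the bookkeeping $\sqrt h\cdot h^{-1/4}=t^{-1/4}$.
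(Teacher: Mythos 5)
Your proof is correct and follows essentially the same route as the paper: your symbol $b(s)=(1-\gamma(s))/s$ is exactly the paper's $\Gamma_{-1}$, and the paper likewise factors $1-\Gamma$ through Lemma \ref{l0} as $\sqrt{h}\,G_h^w(\Gamma_{-1})\circ\widetilde{\mathcal{L}}$, then applies Proposition \ref{P2.6} together with the uniform bound $\|\widetilde{\mathcal{L}}v\|_{L^2}\le C\|u_0\|_{H^{0,1}}$ from (\ref{4104}) and (\ref{3192}). The delicate points you flag (smoothness and decay of $b$, validity of the Moyal identities for these non-Schwartz symbols via the affineness of $x+F'(\xi)$) are precisely what the paper's Lemma \ref{l0} and its stated symbol bounds on $\Gamma_{-1}$ handle.
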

\begin{proof}
	Let $\Gamma_{-1}(\xi)=\frac{1-\gamma(\xi)}{\xi},$   satisfying $|\partial^\alpha \Gamma_{-1}(\xi)|
	\le C_\alpha <\xi>^{-1-\alpha}$ for any $\alpha\in \mathbb{N}$.  Then we can write  
	\begin{eqnarray} 
		1-\Gamma(x,\xi)
		&=&\sqrt h\Gamma_{-1}(\frac{x+F'(\xi)}{\sqrt{h}})(\frac{x+F'(\xi)}{{h}}). \notag
	\end{eqnarray}	 
	From Lemma \ref{l0}, and the definition of $\widetilde{\mathcal{L}}$ in (\ref{3.4}), we get 
	\begin{eqnarray}
		&&G_h^w(1-\Gamma)v=\sqrt h G_h^w(\Gamma_{-1}(\frac{x+F'(\xi)}{\sqrt h}))\circ (\widetilde{\mathcal{L}}v).\notag
	\end{eqnarray}
	An application of  Proposition \ref{P2.6} yields,  for all $t\ge1$,  
	\begin{equation}
		\label{b1}
		\left\|G_h^w(1-\Gamma)v\right\|_{L^\infty }\le C\|\widetilde{\mathcal{L}}v\|_{L^2}h^{1/4},\ \left\|G_h^w(1-\Gamma)v\right\|_{L^2 }\le C\|\widetilde{\mathcal{L}}v\|_{L^2}h^{1/2}.
	\end{equation} 
	Since  $\|\widetilde{\mathcal{L} }v\|_{L^2}\le C\|u_0\|_{H^{0,1}}$ by (\ref{4104}) and (\ref{3192}),  we obtain the desired estimate in   Lemma \ref{l4.3-00}. 
\end{proof}
To get the $L^\infty -$ estimates for $v_\Lambda $ in large time, we deduce an ODE from the PDE system (\ref{3.4-000}):
\begin{equation}
	D_tv_\Lambda=w(x)v_{\Lambda }+\lambda t^{-\alpha /2}  |v_\Lambda|^\alpha v_\Lambda+ t^{-\alpha /2} \left(R_1(v)+R_2(v)\right)\label{4.34}
\end{equation}
where  $w(x)=-(x+c_1)^2/(4c_2)+c_0$ and 
\begin{eqnarray}
	R_1(v)&=&t^{\alpha /2}[D_t-G^w_h(x\xi+F(\xi)),G^w_h(\Gamma)]v\notag\\
	&&+t^{\alpha /2}\left(G_h^w(x\xi+F(\xi))-w(x)\right)v_{\Lambda},\notag
\end{eqnarray}
\begin{equation}
	R_2(v)=-\lambda G^w_h(1-\Gamma)(|v|^\alpha v)+\lambda \left( |v|^\alpha v-|v_\Lambda|^\alpha v_\Lambda\right). \notag
\end{equation}
The rest of this subsection is devoted to proving the following estimates for the remainders  $R_1(v)$ and  $R_2(v)$.
\begin{prop}
	\label{lp}
	For any  $t\ge1$, we have \\
	(a) when $u_0\in H^{0,1}$: 
	\begin{equation}
		\|R_1(v)\|_{L^\infty}\leq C \left\|u_0\right\| _{H^{0,1}}t^{-5/4+\alpha /2},\notag
	\end{equation}
	\begin{equation}
		\|R_1(v)\|_{L^2}\leq C\left\|u_0\right\| _{ H^{0,1}}t^{-3/2+\alpha /2},\notag
	\end{equation}
	\begin{equation}
		\|R_2(v)\|_{L^\infty }\leq C \left\|u_0\right\| _{H^{0,1}}(\left\|v\right\|_{L^\infty }^\alpha+\left\|v_{\Lambda }\right\|_{L^\infty }^\alpha ) t^{-1/4},\notag
	\end{equation}
	\begin{equation}
		\|R_2(v)\|_{L^2}\leq C \left\|u_0\right\| _{H^{0,1}}(\left\|v\right\|_{L^\infty }^\alpha+\left\|v_{\Lambda }\right\|_{L^\infty }^\alpha ) t^{-1/2}.\notag
	\end{equation}
	(b) when $u_0\in H^{0,2}$: 
	\begin{equation}
		\|R_1(v)\|_{L^\infty}\leq C (\|u_0\|_{H^{0,2}}+\|u_0\|_{H^{0,2}}^{2\alpha +1})t^{1/4-\alpha/2 },\notag
	\end{equation}
	\begin{equation}
		\|R_1(v)\|_{L^2}\leq C(\|u_0\|_{H^{0,2}}+\|u_0\|_{H^{0,2}}^{2\alpha +1})t^{-\alpha/2 },\notag
	\end{equation}
	\begin{equation}\notag
		\|R_2(v)\|_{L^\infty }\leq C (\|u_0\|_{H^{0,2}}+\|u_0\|_{H^{0,2}}^{2\alpha +1})(\left\|v\right\|_{L^\infty }^\alpha+\left\|v_{\Lambda }\right\|_{L^\infty }^\alpha ) t^{5/4-\alpha },
	\end{equation}
	\begin{equation}\notag
		\|R_2(v)\|_{L^2}\leq C (\|u_0\|_{H^{0,2}}+\|u_0\|_{H^{0,2}}^{2\alpha +1})(\left\|v\right\|_{L^\infty }^\alpha+\left\|v_{\Lambda }\right\|_{L^\infty }^\alpha ) t^{1-\alpha }.
	\end{equation}
\end{prop}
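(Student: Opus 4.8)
The plan is to read off the ODE (\ref{4.34}) for $v_\Lambda$ and then to bound both remainders by one systematic device: every factor of $\frac{x+F'(\xi)}{\sqrt h}$ that occurs is converted into a power of the rescaled vector field $\widetilde{\mathcal L}$, using $G_h^w(\frac{x+F'(\xi)}{\sqrt h})=\sqrt h\,\widetilde{\mathcal L}$ together with the clean composition rules of Lemma \ref{l0}. This trades the $O(\sqrt h)$ localization near $M$ for the a priori bounds $\|\widetilde{\mathcal L}v\|_{L^2}\lesssim\|u_0\|_{H^{0,1}}$ and $\|\widetilde{\mathcal L}^2v\|_{L^2}\lesssim(\|u_0\|_{H^{0,2}}+\|u_0\|_{H^{0,2}}^{2\alpha+1})t^{2-\alpha}$ supplied by Theorem \ref{T1.1} (via (\ref{3192}), (\ref{491}), (\ref{4104})), the leftover symbol being a genuine function of $\frac{x+F'(\xi)}{\sqrt h}$ to which Proposition \ref{P2.6} applies with its $O(h^{-1/4})$ (resp.\ $O(1)$) $L^2\to L^\infty$ (resp.\ $L^2\to L^2$) loss. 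The only difference between case (a) and case (b) is that in (a) I peel off a single factor of $\frac{x+F'(\xi)}{\sqrt h}$ onto $v$, while in (b) I peel off two and pay the extra $t^{2-\alpha}$ growth of $\widetilde{\mathcal L}^2 v$; this exactly produces the two families of exponents.

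The heart of the matter is the commutator term in $R_1$. Here I would use that $p(x,\xi)=x\xi+F(\xi)$ is quadratic and $\widetilde{\mathcal L}$ is affine, so the Moyal expansions of Propositions \ref{ab}--\ref{P2.3-0} truncate exactly, and that the fundamental commutation $[D_t-F(D),\mathcal L]=0$ transports under the change of variables (\ref{4.2}) to $[P,\widetilde{\mathcal L}]=0$, where $P:=D_t-G_h^w(x\xi+F(\xi))$. Since $G_h^w(\Gamma)=\gamma(\sqrt h\,\widetilde{\mathcal L})$ is a function of $\widetilde{\mathcal L}$, I expect the whole commutator $[P,G_h^w(\Gamma)]$ to collapse---once the explicit time dependence of the semiclassical parameter $h=1/t$ is accounted for through $D_t$---to the single controllable term $\tfrac{i}{2}h\,G_h^w\!\big(\tfrac{x+F'(\xi)}{\sqrt h}\gamma'(\tfrac{x+F'(\xi)}{\sqrt h})\big)$. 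This is the delicate point and the main obstacle: a careless symbol computation that treats $G_h^w$ as $t$-independent produces a spurious contribution $\sqrt h\,(x+c_1)\gamma'(\cdot)$ carrying a weight tangent to $M$, which is \emph{not} controlled by $\|v\|_{L^2}$ or $\|\widetilde{\mathcal L}v\|_{L^2}$ (it would require the uncontrolled $\|hD_xv\|_{L^2}=\|Du\|_{L^2}$); the correct bookkeeping shows this term cancels, and the surviving symbol is tangentially harmless. Once in the clean form, one extracts $\frac{x+F'(\xi)}{\sqrt h}=\sqrt h\,\widetilde{\mathcal L}$, so that Proposition \ref{P2.6} gives $\|h\,G_h^w(\cdot)v\|_{L^\infty}\lesssim h^{5/4}\|\widetilde{\mathcal L}v\|_{L^2}$ and the prefactor $t^{\alpha/2}$ yields the claimed $t^{-5/4+\alpha/2}$; in case (b) a second factor is extracted by writing $\gamma'(s)=s\,(\gamma'(s)/s)$ on the annular support of $\gamma'$, producing $\widetilde{\mathcal L}^2v$ and the bound $t^{1/4-\alpha/2}$.

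The elliptic piece $(G_h^w(x\xi+F(\xi))-w(x))v_\Lambda=\frac{1}{4c_2}G_h^w((x+F'(\xi))^2)v_\Lambda$ is then immediate from the exact identity $(x+F'(\xi))^2=h^2\widetilde{\mathcal L}^2$ (valid because $x+F'(\xi)$ is affine, so $(x+F'(\xi))\sharp(x+F'(\xi))=(x+F'(\xi))^2$): I peel one, resp.\ two, powers of $\widetilde{\mathcal L}$ onto $v$ and keep $\frac{x+F'(\xi)}{\sqrt h}\gamma$, resp.\ $\gamma$, as the Proposition \ref{P2.6} symbol. For $R_2$ the first term is handled by the factorization already used in Lemma \ref{l4.3-00}: writing $1-\gamma(s)=s^k\Gamma_{-k}(s)$ with $\Gamma_{-k}(s)=(1-\gamma(s))/s^k$ decaying like $\langle s\rangle^{-k}$ gives $G_h^w(1-\Gamma)(|v|^\alpha v)=h^{k/2}G_h^w(\Gamma_{-k})\widetilde{\mathcal L}^k(|v|^\alpha v)$ with $k=1$ in case (a) and $k=2$ in case (b); the factors $\widetilde{\mathcal L}(|v|^\alpha v)$ and $\widetilde{\mathcal L}^2(|v|^\alpha v)$ are estimated by the Leibniz identities (\ref{z1})--(\ref{z2}) (which hold verbatim for $\widetilde{\mathcal L}$), Lemma \ref{ll1}, and the bounds of Theorem \ref{T1.1}. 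The remaining term $|v|^\alpha v-|v_\Lambda|^\alpha v_\Lambda$ is controlled pointwise by $C(\|v\|_{L^\infty}^\alpha+\|v_\Lambda\|_{L^\infty}^\alpha)|v_{\Lambda^c}|$, since $z\mapsto|z|^\alpha z$ satisfies $\big||z_1|^\alpha z_1-|z_2|^\alpha z_2\big|\le C(|z_1|^\alpha+|z_2|^\alpha)|z_1-z_2|$, and then by the decay $\|v_{\Lambda^c}\|_{L^\infty}\lesssim t^{-1/4}$, $\|v_{\Lambda^c}\|_{L^2}\lesssim t^{-1/2}$ of Lemma \ref{l4.3-00}.

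All the $L^2$ estimates run in parallel, replacing the $h^{-1/4}$ of Proposition \ref{P2.6} by the $O(1)$ operator bound and thus gaining one more factor $h^{1/4}$ (equivalently $t^{-1/4}$), which matches the stated $L^2$ exponents. To summarize the difficulty: the routine part is the extraction of $\widetilde{\mathcal L}$-powers and the Leibniz/Hölder estimates, whereas the genuinely load-bearing step is the commutator identity---obtaining the exact cancellation of the tangential weight $(x+c_1)\gamma'(\cdot)$ rather than an uncontrollable remainder---which is precisely where the structural relation $[P,\widetilde{\mathcal L}]=0$ and the time dependence of the semiclassical parameter $h=1/t$ must be used together.
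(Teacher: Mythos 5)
Your route is the same as the paper's: the same splitting of $R_1$ into the commutator plus the elliptic piece $\frac{1}{4c_2}G_h^w((x+F'(\xi))^2)v_\Lambda$; the same exact commutator identity $[D_t-G_h^w(x\xi+F(\xi)),G_h^w(\Gamma)]v=\frac{ih}{2}G_h^w\bigl(\gamma'(\tfrac{x+F'(\xi)}{\sqrt h})\tfrac{x+F'(\xi)}{\sqrt h}\bigr)v$, which the paper obtains in (\ref{3291})--(\ref{451}), where indeed the $t$-differentiation of $h=1/t$ inside the quantization and the symbol produces precisely the term cancelling the tangential weight $\xi F''(\xi)/\sqrt h=\tfrac{(x+F'(\xi))-(x+c_1)}{\sqrt h}$ coming from $[G_h^w(x\xi+F(\xi)),G_h^w(\Gamma)]$ --- your identification of this cancellation as the load-bearing step is accurate; the same peeling of one or two factors of $\tfrac{x+F'(\xi)}{\sqrt h}$ via Lemma \ref{l0} onto $\widetilde{\mathcal L}v$ or $\widetilde{\mathcal L}^2v$; the same Leibniz estimates (\ref{z1})--(\ref{z2}) with Lemma \ref{ll1} for $\widetilde{\mathcal L}^k(|v|^\alpha v)$; and Proposition \ref{P2.6} combined with the a priori bounds (\ref{521})--(\ref{4105}).

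There is, however, one concrete gap, in case (b), for the last term of $R_2$. You bound $\||v|^\alpha v-|v_\Lambda|^\alpha v_\Lambda\|$ by $(\|v\|_{L^\infty}^\alpha+\|v_\Lambda\|_{L^\infty}^\alpha)\|v_{\Lambda^c}\|$ and then invoke only Lemma \ref{l4.3-00}, i.e.\ $\|v_{\Lambda^c}\|_{L^\infty}\lesssim t^{-1/4}$ and $\|v_{\Lambda^c}\|_{L^2}\lesssim t^{-1/2}$. That settles case (a), but the case (b) statement requires the rates $t^{5/4-\alpha}$ and $t^{1-\alpha}$ for this term, and in the entire range where case (b) is applied ($\alpha>\frac{7+\sqrt{145}}{12}>3/2$) one has $t^{-1/4}\gg t^{5/4-\alpha}$, so your bound does not imply the stated one. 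This is not cosmetic: inserting $t^{-1/4}$ together with the later substitution $\|v\|_{L^\infty}^\alpha+\|v_\Lambda\|_{L^\infty}^\alpha\lesssim t^{\alpha/2-1}$ gives $\|R_2\|_{L^\infty}\lesssim t^{\alpha/2-5/4}$, i.e.\ for $\alpha>3/2$ the total remainder reverts to the case (a) rate $t^{-5/4+\alpha/2}$, which destroys the improved bounds (\ref{r3})--(\ref{r4}) and hence the extension of the asymptotics down to $\alpha>\frac{7+\sqrt{145}}{12}$. The repair is exactly the two-power peeling you already use for the other term of $R_2$, now applied to $v_{\Lambda^c}$ itself (this is the paper's Lemma \ref{l5.6}): writing $1-\gamma(s)=s^2\,\Gamma_{-4}(s)$ with $\Gamma_{-4}(s)=(1-\gamma(s))/s^2$, Lemma \ref{l0} gives $v_{\Lambda^c}=h\,G_h^w\bigl(\Gamma_{-4}(\tfrac{x+F'(\xi)}{\sqrt h})\bigr)\circ\widetilde{\mathcal L}^2v$, whence Proposition \ref{P2.6} and (\ref{4105}) yield
\begin{equation}
\|v_{\Lambda^c}\|_{L^\infty}\lesssim h^{3/4}\|\widetilde{\mathcal L}^2v\|_{L^2}\lesssim t^{5/4-\alpha},\qquad
\|v_{\Lambda^c}\|_{L^2}\lesssim h\,\|\widetilde{\mathcal L}^2v\|_{L^2}\lesssim t^{1-\alpha},\notag
\end{equation}
which restores the claimed case (b) exponents.
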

The proof of  Proposition \ref{lp} is split into  Lemmas \ref{l4.3}--\ref{l5.6} below. 
\begin{lem}\label{l4.3}
	If  $u_0\in H^{0,1}$, then  for all $t\ge1$,
	\begin{eqnarray}
		\|[D_t-G^w_h(x\xi+F(\xi)),G^w_h(\Gamma)]v\|_{L^\infty}\leq C\|\widetilde{\mathcal{L} }v\|_{L^2}t^{-5/4},\notag\\ \|[D_t-G^w_h(x\xi+F(\xi)),G^w_h(\Gamma)]v\|_{L^2} \le C \|\widetilde{\mathcal{L} }v\|_{L^2}t^{-3/2}.\notag
	\end{eqnarray}
	Furthermore, if $u_0\in H^{0,2}$ and $\alpha \ge1$, then for all $t\ge1$, 
	\begin{eqnarray}
		\|[D_t-G^w_h(x\xi+F(\xi)),G^w_h(\Gamma)]v\|_{L^\infty}\leq C\|\widetilde{\mathcal{L} }^2v\|_{L^2}t^{-7/4},\notag\\  	\|[D_t-G^w_h(x\xi+F(\xi)),G^w_h(\Gamma)]v\|_{L^2} \le C\|\widetilde{\mathcal{L} }^2v\|_{L^2}t^{-2}. \notag
	\end{eqnarray}
\end{lem}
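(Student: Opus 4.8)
The plan is to compute the operator $[D_t-G^w_h(p),G^w_h(\Gamma)]$ essentially exactly, where $p=x\xi+F(\xi)$, and then read off the two pairs of estimates from Proposition \ref{P2.6}. Two structural facts make this tractable. First, $p$ is a polynomial of degree $2$ in $(x,\xi)$, so the Moyal expansion of the symbol commutator terminates and $[G^w_h(p),G^w_h(\Gamma)]=ihG^w_h(\{p,\Gamma\})$ holds with no remainder. Second, conjugating the identity $[D_t-F(D),\mathcal{L}]=0$ by the change of variables (\ref{4.2}) gives $[D_t-G^w_h(p),\widetilde{\mathcal{L}}]=0$; since $G^w_h(\Gamma)$ is a function of $\sqrt h\,\widetilde{\mathcal{L}}$, the whole commutator must be governed only by the explicit $t$-dependence of the cutoff, and this is precisely what produces the gain.

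Concretely, I would split $[D_t-G^w_h(p),G^w_h(\Gamma)]=[D_t,G^w_h(\Gamma)]-[G^w_h(p),G^w_h(\Gamma)]$. For the first term I differentiate the Weyl kernel of $G^w_h(\Gamma)$ in $t$ (recall $h=1/t$); this yields three contributions, from the prefactor $(2\pi h)^{-1}$, from the phase $e^{i(x-y)\xi/h}$ (after one integration by parts in $\xi$), and from the $t$-dependence of $\gamma((\cdot+F'(\xi))/\sqrt h)$. A direct computation gives, exactly,
\[
[D_t,G^w_h(\Gamma)]=2ic_2\sqrt h\,G^w_h\big(\xi\gamma'\big)-\tfrac{i\sqrt h}{2}\,G^w_h\big((x+F'(\xi))\gamma'\big),
\]
with all symbols evaluated at $(x+F'(\xi))/\sqrt h$. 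For the second term, Proposition \ref{P2.3-0} together with $\partial^3 p\equiv 0$ gives $[G^w_h(p),G^w_h(\Gamma)]=ihG^w_h(\{p,\Gamma\})=-i\sqrt h\,G^w_h((x+c_1)\gamma')$, since $\{p,\Gamma\}=\partial_xp\,\partial_\xi\Gamma-\partial_\xi p\,\partial_x\Gamma=-\tfrac{x+c_1}{\sqrt h}\gamma'$. Writing $(x+c_1)\gamma'=(x+F'(\xi))\gamma'-2c_2\xi\gamma'$, the two unbounded contributions proportional to $\xi\gamma'$ cancel, leaving the clean identity
\[
[D_t-G^w_h(p),G^w_h(\Gamma)]=\tfrac{i\sqrt h}{2}\,G^w_h\!\big((x+F'(\xi))\,\gamma'(\tfrac{x+F'(\xi)}{\sqrt h})\big),
\]
whose symbol is genuinely bounded and $O(\sqrt h)$ on $\mathrm{supp}\,\gamma'$.

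From here the estimates follow by extracting factors of $\widetilde{\mathcal{L}}$. Writing $(x+F'(\xi))=\sqrt h\cdot\frac{x+F'(\xi)}{\sqrt h}$ and invoking Lemma \ref{l0} together with $G^w_h(x+F'(\xi))=h\widetilde{\mathcal{L}}$, I obtain $G^w_h((x+F'(\xi))\gamma')=h\,G^w_h(\gamma')\,\widetilde{\mathcal{L}}$, so the commutator equals $\tfrac{i}{2}h^{3/2}G^w_h(\gamma')\widetilde{\mathcal{L}}$. Since $\gamma'\in C_0^\infty$ satisfies the hypothesis of Proposition \ref{P2.6}, the bounds $O(1)$ and $O(h^{-1/4})$ on $\mathcal L(L^2,L^2)$ and $\mathcal L(L^2,L^\infty)$ give $\|\cdot\|_{L^2}\lesssim h^{3/2}\|\widetilde{\mathcal{L}}v\|_{L^2}=t^{-3/2}\|\widetilde{\mathcal{L}}v\|_{L^2}$ and $\|\cdot\|_{L^\infty}\lesssim h^{5/4}\|\widetilde{\mathcal{L}}v\|_{L^2}=t^{-5/4}\|\widetilde{\mathcal{L}}v\|_{L^2}$, which is the first part. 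For the second part I use $\gamma\equiv1$ near $0$, hence $\gamma'\equiv0$ near $0$ and $\tilde\gamma(s):=\gamma'(s)/s\in C_0^\infty$; applying Lemma \ref{l0} once more yields $G^w_h(\gamma')=\sqrt h\,G^w_h(\tilde\gamma)\widetilde{\mathcal{L}}$, so the commutator becomes $\tfrac{i}{2}h^2 G^w_h(\tilde\gamma)\widetilde{\mathcal{L}}^2$ and Proposition \ref{P2.6} produces the bounds $t^{-2}\|\widetilde{\mathcal{L}}^2v\|_{L^2}$ and $t^{-7/4}\|\widetilde{\mathcal{L}}^2v\|_{L^2}$ (the hypothesis $\alpha\ge1$ enters only through Theorem \ref{T1.1}, which guarantees $\widetilde{\mathcal{L}}^2v\in L^2$, and not in this commutator bound itself).

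The step I expect to be the main obstacle is the exact evaluation of $[D_t,G^w_h(\Gamma)]$ and the accompanying cancellation: one must track all three $t$-dependencies in the Weyl kernel carefully and verify that the two unbounded pieces $\propto\xi\gamma'$ cancel precisely. This cancellation is not accidental—it encodes $[D_t-G^w_h(p),\widetilde{\mathcal{L}}]=0$ and the identity $\partial_\xi p=x+F'(\xi)$—but confirming it by hand is the delicate point. Once the clean identity above is established, the remaining steps are routine applications of Lemma \ref{l0} and Proposition \ref{P2.6}.
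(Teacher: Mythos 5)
Your proposal is correct and follows essentially the same route as the paper: both compute $[D_t,G^w_h(\Gamma)]$ by differentiating the $t$-dependence of the Weyl kernel, reduce $[G^w_h(x\xi+F(\xi)),G^w_h(\Gamma)]$ to $ih$ times the Poisson bracket (the paper verifies the cancellation of the two second-order remainders $r_1=r_2$ explicitly rather than invoking quadraticity of the symbol, but the outcome is the same), arrive at the identical clean identity $\frac{ih}{2}G^w_h\bigl(\gamma'(\frac{x+F'(\xi)}{\sqrt h})\frac{x+F'(\xi)}{\sqrt h}\bigr)v$, and then factor out $\widetilde{\mathcal{L}}$ or $\widetilde{\mathcal{L}}^2$ via Lemma \ref{l0} before applying Proposition \ref{P2.6}. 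The only cosmetic difference is that you substitute $F''=2c_2$ and display the cancellation of the $\xi\gamma'$ terms explicitly, whereas the paper keeps the two commutators in combined form; the factorizations, exponents, and final estimates agree exactly.
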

\begin{proof}
	Since $h=t^{-1}$, by a direct computation, we have
	\begin{eqnarray}\label{3291}
		&&	[D_t,G^w_h(\Gamma)]f\notag\\
		&=&-hi G^w_h(\Gamma)f+
		\frac{1}{2\pi h}\int_{\mathbb{R}}\int_{\mathbb{R}} e^{i(x-y)\frac{\xi}{h}}(x-y)\xi \gamma(\frac{\frac{x+y}{2}+F'(\xi)}{\sqrt h})f(t,y)dyd\xi\nonumber\\
		&&+\frac{1}{2\pi h}\int_{\mathbb{R}}\int_{\mathbb{R}} e^{i(x-y)\frac{\xi}{h}} \gamma'(\frac{\frac{x+y}{2}+F'(\xi)}{\sqrt{h}})
		(\frac{x+y}{2}+F'(\xi))\frac{\sqrt{h}}{2i}f(t,y)dyd\xi\nonumber\\
		&=& ih G^w_h[\gamma'(\frac{x+F'(\xi)}{\sqrt{h}})(\xi\frac{F''(\xi)}{\sqrt{h}}-\frac{x+F'(\xi)}{2\sqrt{h}})]f, \label{4.12}
	\end{eqnarray}
	where we used the fact that
	\begin{eqnarray}
		&& \frac{1}{2\pi h}\int_{\mathbb{R}}\int_{\mathbb{R}} e^{i(x-y)\frac{\xi}{h}}(x-y)\xi \gamma(\frac{\frac{x+y}{2}+F'(\xi)}{\sqrt h})f(t,y)dyd\xi\nonumber\\
		&=&
		\frac{1}{2\pi h}\int_{\mathbb{R}}\int_{\mathbb{R}}\frac{h}{i} \partial_\xi e^{i(x-y)\frac{\xi}{h}} \xi \gamma(\frac{\frac{x+y}{2}+F'(\xi)}{\sqrt h})f(t,y)dyd\xi\nonumber\\
		&=& hi G^w_h(\Gamma)f+\frac{1}{2\pi h}\int_{\mathbb{R}}\int_{\mathbb{R}} e^{i(x-y)\frac{\xi}{h}}\xi \gamma'(\frac{\frac{x+y}{2}+F'(\xi)}{\sqrt{h}})
		F''(\xi)i\sqrt{h}f(t,y)dyd\xi.\notag
	\end{eqnarray}
	Then using  Propositions \ref{ab} and \ref{P2.3-0} we write 
	\begin{equation}
		[G^w_h(x\xi+F(\xi)),G^w_h(\Gamma)]=ih G^w_h(\gamma'(\frac{x+F'(\xi)}{\sqrt{h}})(\xi\frac{F''(\xi)}{\sqrt{h}}-\frac{x+F'(\xi)}{\sqrt{h}}))
		+r_1-r_2,\label{4.14}
	\end{equation}
	where
	\begin{eqnarray}
		r_1
		&=&\frac{1}{(2\pi)^2}\int_\mathbb{R}\int_\mathbb{R}\int_\mathbb{R}\int_\mathbb{R}
		e^{\frac{2i}{h} (\eta z-y \zeta)}\frac{1}{h}\gamma''(\frac{x+y+F'(\xi+\eta)}{\sqrt h})\left\{F''(\xi+\eta)\phantom{\int_0^1}\right. \notag\\
		&&\left.-\int _{0}^1F''(\xi+t\zeta)(1-t)dt\right\}
		dyd\eta dz d\zeta,\notag
	\end{eqnarray}
	\begin{eqnarray}
		r_2
		&=&\frac{1}{(2\pi)^2}\int_\mathbb{R}\int_\mathbb{R}\int_\mathbb{R}\int_\mathbb{R}
		e^{\frac{2i}{h} (\eta z-y \zeta)} \left\{-\frac{1}{h}\int_0^1 \gamma''(\frac{x+tz+F'(\xi)}{\sqrt h})(1-t)dtF''(\xi+\eta)\right. \notag\\
		&&\left.+\int_0^1\int_0^1 \frac{1}{h}\gamma''(\frac{x+sz+F'(\xi+t\zeta)}{\sqrt h})F''(\xi+t\zeta)dsdt\right\}
		dyd\eta dz d\zeta.\notag
	\end{eqnarray}
	Since   $F''=2c_2$ and  $
	\int_{ \mathbb{R}}e^{\frac{2i\eta z}{h}}dz=\delta(\eta)\pi h$, $\int_{ \mathbb{R}}e^{-\frac{2iy\zeta}{h}}d\zeta=\delta(y)\pi h$, we have $r_1=\frac{c_2h}{4}\gamma''(\frac{x+F'(\xi)}{\sqrt h})$. Similarly, we have  $r_2=\frac{c_2h}{4}\gamma''(\frac{x+F'(\xi)}{\sqrt h})$. So we deduce from   (\ref{3291}) and (\ref{4.14}) that 
	\begin{eqnarray}\label{451}
		[D_t-G^w_h(x\xi+F(\xi)),G^w_h(\Gamma)]v=\frac{ih}{2}G_h^w(\gamma'(\frac{x+F'(\xi)}{\sqrt h})\frac{x+F'(\xi)}{\sqrt{h}})v. 
	\end{eqnarray}
	On the other hand, using  Lemma \ref{l0},  we can rewrite (\ref{451}) as 
	\begin{eqnarray}
		[D_t-G^w_h(x\xi+F(\xi)),G^w_h(\Gamma)]v=\frac{ih^{3/2}}{2} G_h^w(\gamma'(\frac{x+F'(\xi)}{\sqrt h}))\circ (\widetilde{\mathcal{L}}v),\ \text{when } u_0\in H^{0,1}\notag
	\end{eqnarray}
	\begin{eqnarray}
		[D_t-G^w_h(x\xi+F(\xi)),G^w_h(\Gamma)]v	=\frac{ih^2}{2} G_h^w(\Gamma_{-2}(\frac{x+F'(\xi)}{\sqrt h}))\circ (\widetilde{\mathcal{L}}^2v),\ \text{when } u_0\in H^{0,2}\notag
	\end{eqnarray}
	where $\Gamma_{-2}(\xi)=\frac{\gamma'(\xi)}{\xi}$ satisfying $|\partial^\alpha \Gamma_{-2}(\xi)|
	\le C_\alpha <\xi>^{-1-\alpha}$ for any $\alpha\in \mathbb{N}$. 
	The above identities and an application of 	 Proposition \ref{P2.6} yields the desired estimates in  Lemma \ref{l4.3}.   
\end{proof}
\begin{lem}\label{l5}
	If  $u_0\in H^{0,1}$, then  for all $t\ge1$,
	\begin{equation}
		\|\left(G_h^w(x\xi+F(\xi))-w(x)\right)v_{\Lambda}\|_{L^\infty}
		\leq C\|\widetilde{\mathcal{L} }v\|_{L^2}t^{-5/4},\notag
	\end{equation}
	\begin{equation}
		\|\left(G_h^w(x\xi+F(\xi))-w(x)\right)v_{\Lambda}\|_{L^2}\le C\|\widetilde{\mathcal{L} }v\|_{L^2}t^{-3/2}.\notag
	\end{equation}
	Furthermore, if $u_0\in H^{0,2}$ and $\alpha \ge1$, then for all $t\ge1$, 
	\begin{equation}
		\|\left(G_h^w(x\xi+F(\xi))-w(x)\right)v_{\Lambda}\|_{L^\infty}
		\leq C\|\widetilde{\mathcal{L} }^2v\|_{L^2}t^{-7/4}\notag
	\end{equation}
	\begin{equation}
		\|\left(G_h^w(x\xi+F(\xi))-w(x)\right)v_{\Lambda}\|_{L^2}\le C\|\widetilde{\mathcal{L} }^2v\|_{L^2}t^{-2}.\notag
	\end{equation}
\end{lem}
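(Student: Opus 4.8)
The plan is to reduce the operator $G_h^w(x\xi+F(\xi))-w(x)$ to a constant multiple of $G_h^w((x+F'(\xi))^2)$ and then to extract the required powers of the vector field $\widetilde{\mathcal L}$ and of $h$ by means of the exact symbolic calculus of Lemma \ref{l0}. First I would use the expansion (\ref{15100}), namely $x\xi+F(\xi)=w(x)+\tfrac{1}{4c_2}(x+F'(\xi))^2$. Since $w(x)$ depends on $x$ only, its Weyl quantization is just multiplication by $w(x)$, so by linearity
\begin{equation}
(G_h^w(x\xi+F(\xi))-w(x))v_\Lambda=\frac{1}{4c_2}G_h^w((x+F'(\xi))^2)v_\Lambda.\notag
\end{equation}
Next, because $x+F'(\xi)=x+2c_2\xi+c_1$ is affine in $(x,\xi)$ (all its second derivatives vanish, and $F'''\equiv0$), Proposition \ref{P2.3-0} shows that the Moyal product is exact, $(x+F'(\xi))\sharp(x+F'(\xi))=(x+F'(\xi))^2$, both the Poisson bracket term and the remainder $R$ vanishing. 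Combined with the definition (\ref{3.4}), which reads $G_h^w(x+F'(\xi))=h\widetilde{\mathcal L}$, this gives the operator identity $G_h^w((x+F'(\xi))^2)=(h\widetilde{\mathcal L})^2=h^2\widetilde{\mathcal L}^2$.

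It then remains to transfer the correct number of factors $\widetilde{\mathcal L}=h^{-1/2}G_h^w(\ell)$, with $\ell:=\tfrac{x+F'(\xi)}{\sqrt h}$, onto $v$, exploiting that $v_\Lambda=G_h^w(\gamma(\ell))v$ and that every symbol in sight is a function of $\ell$ alone. By Lemma \ref{l0} the operator $G_h^w(\ell)$ commutes with $G_h^w(\gamma(\ell))$ and satisfies $G_h^w(\ell)G_h^w(\gamma(\ell))=G_h^w(\ell\gamma(\ell))$. In the case $u_0\in H^{0,1}$ I would move a single factor across the cutoff,
\begin{equation}
\frac{1}{4c_2}G_h^w((x+F'(\xi))^2)v_\Lambda=\frac{h^2}{4c_2}\widetilde{\mathcal L}^2G_h^w(\gamma(\ell))v=\frac{h^{3/2}}{4c_2}G_h^w(\Gamma_1(\ell))\,\widetilde{\mathcal L}v,\notag
\end{equation}
where $\Gamma_1(\xi)=\xi\gamma(\xi)\in C_0^\infty(\mathbb{R})$ still satisfies the hypotheses of Proposition \ref{P2.6}; in the case $u_0\in H^{0,2}$ (with $\alpha\ge1$, which only guarantees via Theorem \ref{T1.1} that $\widetilde{\mathcal L}^2v$ is finite) I would commute both factors through the cutoff,
\begin{equation}
\frac{1}{4c_2}G_h^w((x+F'(\xi))^2)v_\Lambda=\frac{h^2}{4c_2}G_h^w(\gamma(\ell))\,\widetilde{\mathcal L}^2v.\notag
\end{equation}

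Finally I would invoke Proposition \ref{P2.6}: since $\gamma$ and $\Gamma_1$ belong to $C_0^\infty(\mathbb{R})$, the operators $G_h^w(\gamma(\ell))$ and $G_h^w(\Gamma_1(\ell))$ are bounded from $L^2$ to $L^\infty$ with norm $O(h^{-1/4})$ and from $L^2$ to $L^2$ with norm $O(1)$. Recalling $h=1/t$, the first identity yields the $L^\infty$ bound $\lesssim h^{3/2}h^{-1/4}\|\widetilde{\mathcal L}v\|_{L^2}=\|\widetilde{\mathcal L}v\|_{L^2}t^{-5/4}$ and the $L^2$ bound $\lesssim h^{3/2}\|\widetilde{\mathcal L}v\|_{L^2}=\|\widetilde{\mathcal L}v\|_{L^2}t^{-3/2}$, while the second identity yields $\lesssim h^2h^{-1/4}\|\widetilde{\mathcal L}^2v\|_{L^2}=\|\widetilde{\mathcal L}^2v\|_{L^2}t^{-7/4}$ and $\lesssim h^2\|\widetilde{\mathcal L}^2v\|_{L^2}=\|\widetilde{\mathcal L}^2v\|_{L^2}t^{-2}$, exactly as claimed.

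The hard part will be the bookkeeping of the symbolic calculus rather than any single estimate: one must verify that no lower-order Moyal remainder survives in the identities above, since any such surviving term would decay more slowly in $h$ and spoil the rates, and one must track the powers of $h$ carefully so that the localization carried by $\gamma(\ell)$ near $M=\{x+F'(\xi)=0\}$ is precisely what allows Proposition \ref{P2.6} to trade the $L^\infty$ norm for an $L^2$ norm at the cost of only $h^{-1/4}$. The affine structure of $x+F'(\xi)$ and the exactness in Lemma \ref{l0} are what make all these products remainder-free, and it is the extra factor $G_h^w(\ell)=\sqrt h\,\widetilde{\mathcal L}$ pulled out onto $v$ that supplies the additional $\sqrt h$ needed to reach $t^{-5/4}$ (resp.\ $t^{-7/4}$) instead of a weaker rate.
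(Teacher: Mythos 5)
Your proof is correct and follows essentially the same route as the paper: both reduce via the expansion $x\xi+F(\xi)=w(x)+\tfrac{1}{4c_2}(x+F'(\xi))^2$, use the exact (remainder-free) calculus of Lemma \ref{l0} to arrive at the identities $\tfrac{h^{3/2}}{4c_2}G_h^w(\xi\gamma(\xi)|_{\xi=\ell})\widetilde{\mathcal L}v$ (case $H^{0,1}$) and $\tfrac{h^2}{4c_2}G_h^w(\gamma(\ell))\widetilde{\mathcal L}^2v$ (case $H^{0,2}$), and then conclude with Proposition \ref{P2.6}; your intermediate step of writing $G_h^w((x+F'(\xi))^2)=h^2\widetilde{\mathcal L}^2$ and commuting factors through the cutoff is just a reordering of the paper's manipulation of the single symbol $(x+F'(\xi))^2\gamma(\ell)$.
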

\begin{proof}
Applying  Lemma \ref{l0} to (\ref{1510}) we get
	\begin{eqnarray}
		&&\left(G_h^w(x\xi+F(\xi))-w(x)\right)v_{\Lambda}=\frac{1}{4c_2}G_h^w((x+F'(\xi))^2\gamma (\frac{x+F'(\xi)}{\sqrt h}))v\notag\\
		&&= 	\frac{1}{4c_2}\begin{cases}
			h^{3/2}G_h^w(\Gamma_{-3}(\frac{x+F'(\xi)}{\sqrt{h}})(\widetilde{\mathcal{L}}v),\ u_0\in H^{0,1} \\
			h^2G_h^w(\gamma(\frac{x+F'(\xi)}{\sqrt{h}})(\widetilde{\mathcal{L}}^2v),\ u_0\in H^{0,2}
		\end{cases}\notag
	\end{eqnarray}
	where $\Gamma_{-3}(\xi)=\xi \gamma(\xi)$ satisfying  $|\partial^\alpha \Gamma_{-3}(\xi)|
	\le C_\alpha <\xi>^{-1-\alpha}$ for any $\alpha\in \mathbb{N}$.  
	The above identities and an application of 	 Proposition \ref{P2.6} yields the desired estimates in  Lemma \ref{l5}.   
\end{proof}
\begin{lem}\label{l4.5}
	If  $u_0\in H^{0,1}$, then  for all $t\ge1$,
	\begin{equation}
		\|G^w_h(1-\Gamma)(|v|^\alpha v)\|_{L^\infty}\le C\|v\|_{L^\infty }^\alpha \|\widetilde{\mathcal{L} }v\|_{L^2}t^{-1/4},\notag
	\end{equation}
	\begin{equation}
		\|G^w_h(1-\Gamma)(|v|^\alpha v)\|_{L^2}\le C\|v\|_{L^\infty }^\alpha \|\widetilde{\mathcal{L}}v\|_{L^2}t^{-1/2}.\notag
	\end{equation}
	Furthermore, if $u_0\in H^{0,2}$ and $\alpha \ge1$, then  for all $t\ge1$, 
	\begin{equation}
		\|G^w_h(1-\Gamma)(|v|^\alpha v)\|_{L^\infty}\le C\|v\|_{L^\infty }^\alpha  \|\widetilde{\mathcal{L} }^2v\|_{L^2}t^{-3/4}, \notag
	\end{equation}
	\begin{equation}
		\|G^w_h(1-\Gamma)(|v|^\alpha v)\|_{L^2}\le C\|v\|_{L^\infty }^\alpha\|\widetilde{\mathcal{L} }^2v\|_{L^2}t^{-1}. \notag
	\end{equation}
\end{lem}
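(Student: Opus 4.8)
The plan is to treat $G_h^w(1-\Gamma)(|v|^\alpha v)$ with exactly the same mechanism used for $G_h^w(1-\Gamma)v$ in Lemma~\ref{l4.3-00}, the only additional ingredient being Leibniz-type formulas for $\widetilde{\mathcal{L}}$ and $\widetilde{\mathcal{L}}^2$ acting on the nonlinearity. The point is that $1-\gamma$ vanishes identically near the origin, so each quotient of the form $\tfrac{1-\gamma(\cdot)}{\cdot}$ remains a nice symbol and absorbs one factor $\tfrac{x+F'(\xi)}{\sqrt h}$; by Lemma~\ref{l0} and the identity $G_h^w(\tfrac{x+F'(\xi)}{\sqrt h})=\sqrt h\,\widetilde{\mathcal{L}}$ coming from (\ref{3.4}), every such extraction produces a factor $\sqrt h=t^{-1/2}$ while turning the bounded-on-$L^2$ symbol into the localizer of Proposition~\ref{P2.6}, which gains $L^\infty$ from $L^2$ at the cost of only $h^{-1/4}$.

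For $u_0\in H^{0,1}$, with $\Gamma_{-1}(\xi)=\tfrac{1-\gamma(\xi)}{\xi}$ (the symbol already used in Lemma~\ref{l4.3-00}), I would write $1-\Gamma=\Gamma_{-1}(\tfrac{x+F'(\xi)}{\sqrt h})\,\tfrac{x+F'(\xi)}{\sqrt h}$ and apply (\ref{4131}) to get
\[
G_h^w(1-\Gamma)(|v|^\alpha v)=\sqrt{h}\,G_h^w\left(\Gamma_{-1}\left(\frac{x+F'(\xi)}{\sqrt{h}}\right)\right)\widetilde{\mathcal{L}}(|v|^\alpha v).
\]
Since $\widetilde{\mathcal{L}}=(x+c_1)t+2c_2D_x$ is a first order operator of the same algebraic type as $\mathcal{L}$ (a real multiplier plus a purely imaginary multiple of $\partial_x$), the computation behind (\ref{z1}) applies verbatim and gives $\widetilde{\mathcal{L}}(|v|^\alpha v)=\tfrac{\alpha+2}{2}|v|^\alpha\widetilde{\mathcal{L}}v-\tfrac{\alpha}{2}|v|^{\alpha-2}v^2\overline{\widetilde{\mathcal{L}}v}$, whence $\|\widetilde{\mathcal{L}}(|v|^\alpha v)\|_{L^2}\le C\|v\|_{L^\infty}^\alpha\|\widetilde{\mathcal{L}}v\|_{L^2}$. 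Feeding this into the two bounds of Proposition~\ref{P2.6} for $\Gamma_{-1}$ produces the factors $\sqrt h\cdot h^{-1/4}=t^{-1/4}$ in $L^\infty$ and $\sqrt h\cdot O(1)=t^{-1/2}$ in $L^2$, which are the first two estimates.

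For $u_0\in H^{0,2}$ I would extract two factors of $x+F'(\xi)$ rather than one. Setting $\beta(\xi)=\tfrac{1-\gamma(\xi)}{\xi^2}$, which is smooth and such that both $\beta$ and $\xi\beta$ satisfy the decay hypotheses of Lemma~\ref{l0}, I write $1-\Gamma=\beta(\tfrac{x+F'(\xi)}{\sqrt h})\,(\tfrac{x+F'(\xi)}{\sqrt h})^2$ and use (\ref{4132}) together with $G_h^w(\tfrac{x+F'(\xi)}{\sqrt h})=\sqrt h\,\widetilde{\mathcal{L}}$ to obtain
\[
G_h^w(1-\Gamma)(|v|^\alpha v)=h\,G_h^w\left(\beta\left(\frac{x+F'(\xi)}{\sqrt{h}}\right)\right)\widetilde{\mathcal{L}}^2(|v|^\alpha v).
\]
The extra power of $h$ compared with the previous case is precisely what upgrades $t^{-1/4},t^{-1/2}$ to $t^{-3/4},t^{-1}$, so it remains to bound $\widetilde{\mathcal{L}}^2(|v|^\alpha v)$ in $L^2$. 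The formula (\ref{z2}), again valid with $(\mathcal{L},u)$ replaced by $(\widetilde{\mathcal{L}},v)$, shows every term is controlled by $|v|^\alpha|\widetilde{\mathcal{L}}^2v|$ or by $|v|^{\alpha-1}|\widetilde{\mathcal{L}}v|^2$.

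The delicate step, and the reason for the hypothesis $\alpha\ge1$, is the quadratic term $|v|^{\alpha-1}|\widetilde{\mathcal{L}}v|^2$: only for $\alpha\ge1$ is $|v|^{\alpha-1}$ a genuine nonnegative power, allowing the bound $\||v|^{\alpha-1}|\widetilde{\mathcal{L}}v|^2\|_{L^2}\le\|v\|_{L^\infty}^{\alpha-1}\|\widetilde{\mathcal{L}}v\|_{L^4}^2$. I would then close the estimate with the Gagliardo--Nirenberg inequality $\|\widetilde{\mathcal{L}}v\|_{L^4}\le C\|v\|_{L^\infty}^{1/2}\|\widetilde{\mathcal{L}}^2v\|_{L^2}^{1/2}$, which is the $v$-side of (\ref{4134}) read off from the scaling relations (\ref{3.4-0})--(\ref{4104}). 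This yields $\|\widetilde{\mathcal{L}}^2(|v|^\alpha v)\|_{L^2}\le C\|v\|_{L^\infty}^\alpha\|\widetilde{\mathcal{L}}^2v\|_{L^2}$, and a last application of Proposition~\ref{P2.6} delivers the two $H^{0,2}$ estimates. Beyond this nonlinear bookkeeping the argument is entirely mechanical; the main obstacle is simply organizing the symbol factorizations so that each derivative of the nonlinearity is paired with exactly one copy of $\widetilde{\mathcal{L}}$.
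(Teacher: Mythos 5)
Your proposal is correct and follows essentially the same route as the paper: the same factorizations $1-\Gamma=\Gamma_{-1}\cdot\frac{x+F'(\xi)}{\sqrt h}$ and $1-\Gamma=\Gamma_{-4}\cdot(\frac{x+F'(\xi)}{\sqrt h})^2$, the same use of Lemma \ref{l0} and Proposition \ref{P2.6}, and the same nonlinear bounds $\|\widetilde{\mathcal{L}}(|v|^\alpha v)\|_{L^2}\lesssim\|v\|_{L^\infty}^\alpha\|\widetilde{\mathcal{L}}v\|_{L^2}$ and $\|\widetilde{\mathcal{L}}^2(|v|^\alpha v)\|_{L^2}\lesssim\|v\|_{L^\infty}^\alpha\|\widetilde{\mathcal{L}}^2v\|_{L^2}$. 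The only (cosmetic) difference is that you derive the Leibniz formulas and the Gagliardo--Nirenberg step directly in the $v$-variables, whereas the paper transfers to the $u$-variables via (\ref{4103})--(\ref{4104}), applies (\ref{z1})--(\ref{z2}) and Lemma \ref{ll1} there, and scales back; the two computations are equivalent since the powers of $t$ cancel exactly.
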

\begin{proof}
	We first claim that 
	\begin{equation}
		\|\widetilde{\mathcal{L} }(|v|^\alpha v)\|_{L^2}\le C\|v\|_{L^\infty }^\alpha \|\widetilde{\mathcal{L} }v\|_{L^2},\label{asd1}
	\end{equation}
	and for  $\alpha \ge1$,
	\begin{equation}
		\|\widetilde{\mathcal{L} }^2(|v|^\alpha v)\|_{L^2}\le C\|v\|_{L^\infty }^\alpha \|\widetilde{\mathcal{L} }^2v\|_{L^2}.\label{asd2}
	\end{equation}
	We only give the proof of (\ref{asd2}) as   (\ref{asd1}) can be proved in a similar way. Since 
	\begin{equation}
		\mathcal{L} ^2(|u|^\alpha u)=t^{-\frac{\alpha +1}{2}}\widetilde{\mathcal{L} }^2(|v|^\alpha v)(t,\frac{x}{t})\notag
	\end{equation}
	by the second formula in  (\ref{4103}), it follows from   Lemma \ref{ll1} and  (\ref{z2}) that 
	\begin{eqnarray}
		\|\widetilde{\mathcal{L} }^2(|v|^\alpha v)\|_{L^2}&\lesssim &  t^{\alpha /2} \left(\|u\|_{L^\infty }^\alpha \|\mathcal{L} ^2u\|_{L^2}+\|u\|_{L^\infty }^{\alpha -1}\|\mathcal{L} u\|_{L^4}^2 \right)\notag\\
		&\lesssim &   t^{\alpha /2} \|u\|_{L^\infty } ^\alpha  \|\mathcal{L} ^2u\|_{L^2},\notag 
	\end{eqnarray}
	which together with (\ref{4103})--(\ref{3.4-0})  yields the desired estimate (\ref{asd2}).

	We now resume the proof of Lemma \ref{l4.5}. Using the same method as that used to derive  (\ref{b1}), one obtains
	\begin{equation}
		\left\|G_h^w(1-\Gamma)(|v|^\alpha v)\right\|_{L^\infty }\le Ch^{1/4}\|\widetilde{\mathcal{L}}(|v|^\alpha v)\|_{L^2}\notag
	\end{equation} 
	\begin{equation}
		\left\|G_h^w(1-\Gamma)(|v|^\alpha v)\right\|_{L^2 }\le Ch^{1/2}\|\widetilde{\mathcal{L}}(|v|^\alpha v)\|_{L^2},\notag
	\end{equation}
	which together with (\ref{asd1}) proves the first part of Lemma \ref{l4.5}. 
	
	When  $u_0\in H^{0,2}$, we  write 
	\begin{equation}
		1-\Gamma(x,\xi)=h\Gamma_{-4}(\frac{x+F'(\xi)}{\sqrt h})(\frac{x+F'(\xi)}{ h})^2,\notag
	\end{equation}
	where $\Gamma_{-4}(\xi)=\frac{1-\gamma(\xi)}{\xi^2}$, satisfying $|\partial_\xi^\alpha \Gamma_{-4}(\xi)|\le C_\alpha <\xi>^{-\alpha }$ for any $\alpha \in \mathbb{N}$. Then using Lemma \ref{l0}, and the definition of $\widetilde{\mathcal{L}}$ in (\ref{3.4}), one gets 
	\begin{eqnarray}
		&&G_h^w(1-\Gamma)(|v|^\alpha v)= h G_h^w(\Gamma_{-4}(\frac{x+F'(\xi)}{\sqrt h}))\circ \widetilde{\mathcal{L}}^2(|v|^\alpha v).\notag
	\end{eqnarray}
	An application of  Proposition \ref{P2.6} yields,   for all $t\ge1$,  
	\begin{equation}
		\left\|G_h^w(1-\Gamma)(|v|^\alpha v)\right\|_{L^\infty }\le Ch^{3/4}\|\widetilde{\mathcal{L} }^2(|v|^\alpha v)\|_{L^2},\notag
	\end{equation} 
	\begin{equation}
		\left\|G_h^w(1-\Gamma)(|v|^\alpha v)\right\|_{L^2 }\le C h\|\widetilde{\mathcal{L} }^2(|v|^\alpha v)\|_{L^2}.\notag
	\end{equation}
	This  together with (\ref{asd2}) proves   the second part  of  Lemma \ref{l4.5}. 
\end{proof}

Using  similar argument as  in the proof of Lemma \ref{l4.5}, we  obtain the following lemma  easily and omit the details.
\begin{lem}\label{l5.6}
	If  $u_0\in H^{0,1}$, then  for all $t\ge1$,
	\begin{equation}
		\||v|^\alpha v-|v_\Lambda|^\alpha v_\Lambda\|_{L^\infty}\le C (\left\|v\right\|_{L^\infty }^\alpha+\left\|v_{\Lambda }\right\|_{L^\infty }^\alpha )\|\widetilde{\mathcal{L}}v\|_{L^2}t^{-1/4},\notag
	\end{equation}
	\begin{equation}
		\||v|^\alpha v-|v_\Lambda|^\alpha v_\Lambda\|_{L^2} \le C (\left\|v\right\|_{L^\infty }^\alpha+\left\|v_{\Lambda }\right\|_{L^\infty }^\alpha ) \|\widetilde{\mathcal{L}}v\|_{L^2}t^{-1/2}. \notag
	\end{equation}
	Furthermore, if $u_0\in H^{0,2}$ and $\alpha \ge1$, then for all $t\ge1$, 	\begin{equation}
		\||v|^\alpha v-|v_\Lambda|^\alpha v_\Lambda\|_{L^\infty}\le C (\left\|v\right\|_{L^\infty }^\alpha+\left\|v_{\Lambda }\right\|_{L^\infty }^\alpha )\|\widetilde{\mathcal{L} }^2v\|_{L^2}t^{-3/4},\notag
	\end{equation}
	\begin{equation}
		\||v|^\alpha v-|v_\Lambda|^\alpha v_\Lambda\|_{L^2} \le C (\left\|v\right\|_{L^\infty }^\alpha+\left\|v_{\Lambda }\right\|_{L^\infty }^\alpha )  \|\widetilde{\mathcal{L} }^2v\|_{L^2}t^{-1}. \notag
	\end{equation}
\end{lem}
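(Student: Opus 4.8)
The plan is to reduce the difference $|v|^\alpha v-|v_\Lambda|^\alpha v_\Lambda$ to a pointwise product of a power of the solution with the remainder $v_{\Lambda^c}=v-v_\Lambda$, and then to feed in the decay of $v_{\Lambda^c}$ produced by the cutoff machinery already used for Lemma \ref{l4.3-00} and Lemma \ref{l4.5}; the only genuinely new ingredient is an elementary inequality for the nonlinearity. First I would record that the map $N(z)=|z|^\alpha z$ belongs to $C^1(\mathbb{C})$ for every $\alpha>0$ and satisfies $|DN(z)|\le(\alpha+1)|z|^\alpha$, so that writing $N(a)-N(b)=\int_0^1\frac{d}{ds}N\bigl(b+s(a-b)\bigr)\,ds$ and bounding $|b+s(a-b)|\le|a|+|b|$ yields the Lipschitz-type estimate
$$\bigl||a|^\alpha a-|b|^\alpha b\bigr|\le C\bigl(|a|^\alpha+|b|^\alpha\bigr)|a-b|,\qquad a,b\in\mathbb{C}.$$
Applying this pointwise with $a=v$ and $b=v_\Lambda$ and using $v-v_\Lambda=v_{\Lambda^c}$ gives
$$\bigl||v|^\alpha v-|v_\Lambda|^\alpha v_\Lambda\bigr|\le C\bigl(\|v\|_{L^\infty}^\alpha+\|v_\Lambda\|_{L^\infty}^\alpha\bigr)|v_{\Lambda^c}|,$$
so that taking the $L^\infty$, resp. $L^2$, norm factors out $\|v\|_{L^\infty}^\alpha+\|v_\Lambda\|_{L^\infty}^\alpha$ and leaves exactly $\|v_{\Lambda^c}\|_{L^\infty}$, resp. $\|v_{\Lambda^c}\|_{L^2}$.

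It then remains to quote the decay of $v_{\Lambda^c}$. When $u_0\in H^{0,1}$ these are precisely the bounds established inside the proof of Lemma \ref{l4.3-00}, namely $\|v_{\Lambda^c}\|_{L^\infty}\le C\|\widetilde{\mathcal{L}}v\|_{L^2}t^{-1/4}$ and $\|v_{\Lambda^c}\|_{L^2}\le C\|\widetilde{\mathcal{L}}v\|_{L^2}t^{-1/2}$, which immediately yield the first pair of estimates. When $u_0\in H^{0,2}$ and $\alpha\ge1$ I would rerun that argument while retaining two factors of $\widetilde{\mathcal{L}}$: writing $1-\Gamma=h\,\Gamma_{-4}(\tfrac{x+F'(\xi)}{\sqrt h})(\tfrac{x+F'(\xi)}{h})^2$ with $\Gamma_{-4}(\xi)=\frac{1-\gamma(\xi)}{\xi^2}$, exactly as in the $H^{0,2}$ part of Lemma \ref{l4.5}, Lemma \ref{l0} gives $v_{\Lambda^c}=h\,G_h^w\bigl(\Gamma_{-4}(\tfrac{x+F'(\xi)}{\sqrt h})\bigr)\circ\widetilde{\mathcal{L}}^2v$, whence Proposition \ref{P2.6} produces $\|v_{\Lambda^c}\|_{L^\infty}\le C\|\widetilde{\mathcal{L}}^2v\|_{L^2}t^{-3/4}$ and $\|v_{\Lambda^c}\|_{L^2}\le C\|\widetilde{\mathcal{L}}^2v\|_{L^2}t^{-1}$. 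Substituting these into the factored bound gives the second pair of estimates.

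Since the cutoff estimates for $v_{\Lambda^c}$ are already in hand from Lemmas \ref{l4.3-00} and \ref{l4.5}, I do not expect a serious obstacle. The only points needing a line of care are the justification of the pointwise Lipschitz inequality uniformly down to $z=0$ over the full subcritical range $0<\alpha<2$ (legitimate because $z\mapsto|z|^\alpha z$ is $C^1$ for every $\alpha>0$, so the integral representation of $N(a)-N(b)$ is valid), and the observation that in the $H^{0,2}$ branch the refined bounds for $v_{\Lambda^c}$ must be produced by the two-factor decomposition above rather than read off verbatim from the $H^{0,1}$ statement of Lemma \ref{l4.3-00}.
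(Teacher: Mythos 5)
Your proposal is correct and follows exactly the argument the paper intends (the paper omits the details, remarking only that the proof is ``similar to Lemma \ref{l4.5}''): the pointwise Lipschitz bound $||a|^\alpha a-|b|^\alpha b|\le C(|a|^\alpha+|b|^\alpha)|a-b|$ reduces everything to $\|v_{\Lambda^c}\|_{L^\infty}$ and $\|v_{\Lambda^c}\|_{L^2}$, which are then controlled by the one-factor decomposition of $1-\Gamma$ from the proof of Lemma \ref{l4.3-00} in the $H^{0,1}$ case and by the two-factor decomposition via $\Gamma_{-4}$ together with Lemma \ref{l0} and Proposition \ref{P2.6} in the $H^{0,2}$ case. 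No gaps; the exponents $t^{-1/4},t^{-1/2},t^{-3/4},t^{-1}$ all come out as stated.
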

\begin{proof}
	[\textbf{Proof of Proposition \ref{lp} }]
	It follows from  (\ref{3234}), (\ref{0451}) and (\ref{4104})  that for all $t\ge1$, 
	\begin{equation}
		\label{521}
		\|\widetilde{\mathcal{L} }v(t,\cdot)\|_{L^2}\le \|u_0\|_{H^{0,1}},
	\end{equation}
	\begin{equation}
		\label{4105}
		\|\widetilde{\mathcal{L} }^2v(t,\cdot)\|_{L^2}\le C(\left\|u_0\right\|_{H^{0,2}}+\|u_0\|_{H^{0,2}}^{2\alpha +1})t^{2-\alpha }.
	\end{equation}
	Proposition \ref{lp} then follows from  (\ref{521})-(\ref{4105}) and Lemmas \ref{l4.3}--\ref{l5.6}. 
\end{proof}

\subsection{The rough $L^\infty $ estimate for $v_{\Lambda }$}\label{sub2}
The goal of this  subsection is to derive the $L^\infty $ estimate for $v_{\Lambda }$ from the ODE  (\ref{3.4-000}) and therefore completing  the proof of Theorem \ref{T1.2}. 
Notice that  the estimate of  $R_2(v)$ depends on  $\|v_{\Lambda }\|_{L^\infty }$ and the initial value is large,  we apply  the  bootstrap argument to derive the decay estimate (\ref{d1}) with  $K$ large. 

Let $K$ be  a sufficiently large  constant such that 
\begin{eqnarray}
	\label{4121} 
	K>\max \left\{2^{1/2+1/\alpha },\ 1\right\} 
\end{eqnarray}
\begin{eqnarray}\label{3201}
	\frac{2-\alpha }{2}K^{-\alpha }+C_2\alpha 2^{\alpha +2}K^{-1} < \alpha \lambda _2,
\end{eqnarray}
where $C_1,\ C_2$ are  constants  depending on  $u_0$ that appear in (\ref{e1}), (\ref{e5}) respectively.

We assume that $v$ satisfies a bootstrap hypotheses on $t\in[1,T_1]$:
\begin{eqnarray}\label{bass}
	\left\|v_{\Lambda}(t,x)\right\|_{L_x^\infty }\le 	2	K t^{1/2-1/\alpha }.
\end{eqnarray}
From  Lemma \ref{l4.3-00}, (\ref{3.4-0})  and the local decay estimate  (\ref{ldecay}), we see that, for $t\in[1,2]$, 
\begin{eqnarray}\label{e1}
	\left\|v_{\Lambda }(t,x)\right\|_{L_x^\infty }&\le&	\left\|v(t,x)\right\|_{L_x^\infty }+	\left\|v_{\Lambda^c }(t,x)\right\|_{L_x^\infty }\notag\\
	&\le&C_1(1+t^{-1/4})	<	K t^{1/2-1/\alpha }.
\end{eqnarray}
This implies  that $T_1>2$. Moreover, it follows from  (\ref{bass}) and Proposition \ref{lp} that $\lambda t^{-\alpha /2}  |v_\Lambda|^\alpha v_\Lambda+ t^{-\alpha /2} \left(R_1(v)+R_2(v)\right)$ is integrable on $(1,T_1 )$; so that $v_{\Lambda}(t,x)\in $ $C((1,T_1),$ $L^\infty )$ by the equation  (\ref{4.34}). 

The following lemma is crucial to close the bootstrap hypotheses (\ref{bass}). 
\begin{lem}\label{l4.8}
	Under the assumptions (\ref{bass}) and  $4/3<\alpha <2$, we have that, for all $t\in (1,T_1)$, 
\begin{equation}
		\left\|v_{\Lambda }(t,x)\right\|_{L_x^\infty }\le		K t^{1/2-1/\alpha }.\notag
\end{equation}
\end{lem}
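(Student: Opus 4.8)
The plan is to close the bootstrap hypothesis (\ref{bass}) by analyzing the ODE (\ref{4.34}) for $v_\Lambda$ through the modulus quantity $\rho(t,x)=|v_\Lambda(t,x)|^2$. Since the linear term $w(x)v_\Lambda$ and the leading nonlinear term $\lambda t^{-\alpha/2}|v_\Lambda|^\alpha v_\Lambda$ have a clean structure, I would first compute $D_t \rho$, or equivalently $\frac{d}{dt}|v_\Lambda|^2$, from (\ref{4.34}). The key observation is that $w(x)$ is real, so $w(x)v_\Lambda$ contributes nothing to the time evolution of $|v_\Lambda|^2$; the dissipative term $\lambda t^{-\alpha/2}|v_\Lambda|^\alpha v_\Lambda$ contributes $-2\lambda_2 t^{-\alpha/2}|v_\Lambda|^{\alpha+2}$ (recall $\lambda=\lambda_1+i\lambda_2$ with $\lambda_2>0$), and the remainder $t^{-\alpha/2}(R_1(v)+R_2(v))$ contributes a controllable error. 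Concretely, I expect
\begin{equation}
	\frac{d}{dt}|v_\Lambda|^2 = -2\lambda_2 t^{-\alpha/2}|v_\Lambda|^{\alpha+2} + 2 t^{-\alpha/2}\,\mathrm{Im}\left(\overline{v_\Lambda}(R_1(v)+R_2(v))\right),\notag
\end{equation}
which is a scalar differential inequality once the error is bounded pointwise in $x$.

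Next I would insert the remainder bounds from Proposition \ref{lp}, part (a), namely $\|R_1(v)\|_{L^\infty}\le C_1' t^{-5/4+\alpha/2}$ and $\|R_2(v)\|_{L^\infty}\le C_1'(\|v\|_{L^\infty}^\alpha+\|v_\Lambda\|_{L^\infty}^\alpha)t^{-1/4}$, together with the bootstrap hypothesis (\ref{bass}) and the fact (from Lemma \ref{l4.3-00} and $v=v_\Lambda+v_{\Lambda^c}$) that $\|v\|_{L^\infty}\lesssim \|v_\Lambda\|_{L^\infty}+t^{-1/4}$. This is where the constant $C_2$ of (\ref{3201}) enters: the $R_2$ contribution, after multiplying by $t^{-\alpha/2}$ and using $\|v_\Lambda\|_{L^\infty}\le 2Kt^{1/2-1/\alpha}$, should produce a term of size $C_2\alpha 2^{\alpha+2}K^{-1}$ relative to the main dissipative term. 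The goal is to arrange the algebra so that the differential inequality for $g(t,x)=t^{2/\alpha-1}|v_\Lambda(t,x)|^2$ (the natural rescaling since we target $|v_\Lambda|\sim t^{1/2-1/\alpha}$) has the form $\frac{d}{dt}g \le t^{-1}g\left(\frac{2}{\alpha}-1 - 2\lambda_2 g^{\alpha/2}+\text{errors}\right)$, where the assumption $\alpha>4/3$ guarantees that the error terms (carrying powers like $t^{-1/4}$ from $v_{\Lambda^c}$ and from $R_1$) are genuinely lower order and integrable after the rescaling.

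From this rescaled inequality I would run a barrier/comparison argument: the structure $\frac{d}{dt}g \le t^{-1}g(A - B g^{\alpha/2})$ with $A=\frac{2-\alpha}{\alpha}$ and $B\approx 2\lambda_2$ forces $g$ to stay below the equilibrium value $(A/B)^{2/\alpha}$ up to the error corrections. Condition (\ref{3201}) is precisely calibrated so that when $g$ reaches the threshold corresponding to $\|v_\Lambda\|_{L^\infty}=2Kt^{1/2-1/\alpha}$, the right-hand side is strictly negative, preventing further growth, while condition (\ref{4121}) handles the initialization so that the improved bound $Kt^{1/2-1/\alpha}$ (rather than $2K$) is recovered. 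The main obstacle I anticipate is the pointwise-in-$x$ bookkeeping: the remainder estimates in Proposition \ref{lp} are stated in $L^\infty_x$ norm, but the differential inequality is pointwise in $x$, so I must either argue at a point where $|v_\Lambda(t,\cdot)|$ nearly attains its supremum (using that $v_\Lambda\in C((1,T_1),L^\infty)$) or work with $\|v_\Lambda(t,\cdot)\|_{L^\infty}$ directly via a Rademacher-type differentiation of the sup norm. Handling the non-differentiability of the sup and justifying that the error terms controlled in $L^\infty_x$ can be fed into a pointwise ODE comparison is the delicate technical point; a contradiction argument (supposing the improved bound fails at a first time $t_0$) is the cleanest way to circumvent differentiating the supremum.
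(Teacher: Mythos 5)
Your proposal is correct and takes essentially the same approach as the paper: a pointwise-in-$x$ scalar ODE for the modulus of $v_\Lambda$ derived from (\ref{4.34}), remainder bounds from Proposition \ref{lp} combined with the bootstrap hypothesis (\ref{bass}) and Lemma \ref{l4.3-00} (which is exactly where $\alpha>4/3$ enters), and a contradiction argument at a first crossing of the level $Kt^{1/2-1/\alpha}$, closed by the largeness condition (\ref{3201}). The only cosmetic difference is that the paper multiplies (\ref{4.34}) by $\left|v_\Lambda\right|^{-(\alpha+2)}\overline{v_\Lambda}$ and tracks $\left|v_\Lambda\right|^{-\alpha}$, which renders the dissipative contribution exactly $-\lambda_2 t^{-\alpha/2}$ and permits an integrated monotonicity argument, whereas you track $t^{2/\alpha-1}\left|v_\Lambda\right|^2$ and evaluate the sign of the derivative at the crossing time; both implementations of the barrier work with the same ingredients.
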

\begin{proof}The proof is inspired by Lemma 2.3 of \cite{Kita}.	We prove it by contradiction argument.  Assume there exists some $(t_0,\xi_0)\in(1,T_1)\times \mathbb{R}$  such that $\left|v_{\Lambda }(t_0,\xi_0)\right|>Kt_0^{1/2-1/\alpha }$.
	From (\ref{e1})  and the continuity of $v_{\Lambda }(t,\xi_0)$, we can find  $t_*\in(1,t_0)$ such that 
	\begin{equation}
		\label{492}
		\left|v_{\Lambda }(t,\xi_0)\right|>Kt^{1/2-1/\alpha },\qquad \text{holds for all } t_*<t\le t_0
	\end{equation}
	and  furthermore $\left|v_{\Lambda }(t_*,\xi_0)\right|=Kt_*^{1/2-1/\alpha }$. Multiplying $\left|v_{\Lambda }(t,\xi_0)\right|^{-(\alpha +2)}\overline{v_{\Lambda }(t,\xi_0)}$ on both hand sides of (\ref{4.34}) and taking the imaginary part, we obtain 
	\begin{equation}\label{e2}
		-\frac{1}{\alpha }\frac{d}{dt}\left|v_{\Lambda }\right|^{-\alpha }
		=- \lambda_2 t^{-\alpha /2}-\text{Im} \left(t^{-\alpha /2}\left(R_1(v)+R_2(v)\right)\overline{v_{\Lambda }}\right)\left|v_{\Lambda }\right|^{-(\alpha +2)}.
	\end{equation}
	On the other hand, from Proposition \ref{lp},  we have 
	\begin{eqnarray}\label{e4}
		\left\|R_1(v)+R_2(v)\right\|_{L^\infty }&\le& C\left(t^{-5/4+\alpha /2}+(\left\|v_{\Lambda}\right\|_{L^\infty }^\alpha +\left\|v_{\Lambda^c}\right\|_{L^\infty }^\alpha)t^{-1/4}\right)\notag\\
		&\le& C2^{\alpha +2}K^\alpha t^{-5/4+\alpha /2},
	\end{eqnarray}
	for all $t\in (t_*,t_0)$, where we used (\ref{4121}),  Lemma \ref{l4.3-00} and the bootstrap hypotheses (\ref{bass})  to bound  $\|v_{\Lambda }\|_{L^\infty }^\alpha +\|v_{\Lambda ^c}\|_{L^\infty }^\alpha $.  It then  follows from  (\ref{492})--(\ref{e4})  that there exists $C_2>0$ such that for  $t_*<t<t_0$
	\begin{eqnarray} \label{e5}
		-\frac{1}{\alpha }\frac{d}{dt}\left|v_{\Lambda }(t,\xi_0)\right|^{-\alpha }\le -\lambda _2t^{-\alpha /2}+C_22^{\alpha +2}K^{-1} t^{-3/4-\alpha /2+1/\alpha }.
	\end{eqnarray}
	Integrating (\ref{e5}) from $t_*$ to $t$, we get
	\begin{eqnarray}
		&&\left|v_{\Lambda }(t,\xi_0)\right|^{-\alpha }-\left|v_{\Lambda }(t_*,\xi_0)\right|^{-\alpha }\notag\\
		&\ge&\frac{2\alpha \lambda _2}{2-\alpha }\left(t^{1-\alpha /2}-t_*^{1-\alpha /2}\right)-C_2\alpha 2^{\alpha +2}K^{-1} \int_{t^*}^{t}s^{-3/4-\alpha /2+1/\alpha } \mathrm{d}s .\notag
	\end{eqnarray}
	This inequality  together with $\left|v_{\Lambda }(t_*,\xi_0)\right|=Kt_*^{1/2-1/\alpha }$ implies that 
	\begin{eqnarray}
			\left(t^{1/\alpha -1/2}\left|v_{\Lambda }(t,\xi_0)\right|\right)^{-\alpha }
		&\ge& \left(\frac{t_*}{t}\right)^{1-\alpha /2}K^{-\alpha} +\frac{2\alpha \lambda _2}{2-\alpha }\left(1-\left(\frac{t_*}{t}\right)^{1-\alpha /2}\right)\notag\\
		&&-C_2\alpha 2^{\alpha +2}K^{-1} t^{-1+\alpha /2}\int_{t^*}^{t}s^{-3/4-\alpha /2+1/\alpha } \mathrm{d}s\notag\\
		&=:&f(t).\notag
	\end{eqnarray}
	Note that $f(t_*)=K^{-\alpha }$ and $f(t)$ is monotone increasing around $t=t_*$. Indeed, by (\ref{3201})
	\begin{equation}
			f'(t_*)=\left(\frac{\alpha -2}{2}K^{-\alpha  }+\alpha \lambda _2-C_2\alpha 2^{\alpha +2}K^{-1} t_*^{-3/4+1/\alpha  }\right)t_*^{-1}>0. \notag
	\end{equation}
	Thus, if $t$ is slightly larger than $t_*$, then $\left(t^{1/\alpha -1/2}\left|v_{\Lambda }(t,\xi_0)\right|\right)^{-\alpha }>K^{-\alpha }$, which contradicts (\ref{492}), from which Lemma \ref{l4.8} follows. 
\end{proof}
\begin{proof}
	[\textbf{Proof of Theorem \ref{T1.2}}]
	 Lemma \ref{l4.8} and a standard continuation argument  imply $T_1=\infty $ and   that for all $t\ge1$, 
\begin{equation}
	\label{d1}
	\left\|v_{\Lambda}(t,x)\right\|_{L_x^\infty }\le 	2	K t^{1/2-1/\alpha }.
\end{equation}
This together with  (\ref{3.4-0}) and Lemma \ref{l4.3-00} yields  $\|u(t,x)\|_{L^\infty _x}\lesssim  t^{-1/\alpha }$, from which Theorem \ref{T1.2} follows. 
\end{proof}	 
\section{The proof of Theorem \ref{T1.3}} \label{S6}
In this section, we prove Theorem \ref{T1.3}. We only consider  the nontrivial case $u_0\neq0$. To establish the asymptotic formula for the solutions for a wider range of $\alpha $, we first need to refine the $L^\infty $ estimate of $v_{\Lambda}$. 
\subsection{The refined $L^\infty $ estimate for $v_{\Lambda }$}
In this subsection, we show how to obtain the refined  $L^\infty $ estimate for  $v_{\Lambda }$ (Lemma \ref{l5.1}). This will be essential in proving the large time asymptotics of the solutions in Subsection \ref{sub3}.

Substituting the rough  $L^\infty $ bound (\ref{d1}) of  $v_{\Lambda }$ into Proposition \ref{lp}, we can rewrite the equation (\ref{4.34}) as 
\begin{equation}
	D_tv_\Lambda=w(x)v_{\Lambda }+\lambda t^{-\alpha /2}  |v_\Lambda|^\alpha v_\Lambda+ t^{-\alpha /2} R(v), \label{4.340}
\end{equation}
where $R(v)$ satisfies, for all $t\ge1$, \\
(a) $u_0\in H^{0,1}$: 
\begin{equation}
	\|R(v)\|_{L^\infty}\leq Ct^{-\frac{5}{4}+\alpha /2},\label{r2}
\end{equation}
\begin{equation}
	\|R(v)\|_{L^2}\leq Ct^{-3/2+\alpha /2}.\label{r1}
\end{equation}
(b) $u_0\in H^{0,2}$:
\begin{equation}
	\|R(v)\|_{L^\infty}\leq Ct^{1/4-\alpha/2 },\label{r3}
\end{equation}
\begin{equation}
	\|R(v)\|_{L^2}\leq Ct^{-\alpha/2 }.\label{r4}
\end{equation}
For $0<\varepsilon<2\alpha \lambda _2$, we define
\begin{eqnarray}
	\begin{cases}
		h_1(\varepsilon ):=\frac{3}{4}-\frac{\alpha}{2} +\lambda _2\frac{2-\alpha }{2\alpha\lambda _2-\varepsilon  },\ \text{when }u_0\in H^{0,1},  \frac{1+\sqrt{33}}{4}<\alpha <2,\\
		h_2(\varepsilon ):=\frac{9}{4}-\frac{3\alpha}{2} +\lambda _2\frac{2-\alpha }{2\alpha\lambda _2-\varepsilon  },\ \text{when }u_0\in H^{0,2}, \frac{7+\sqrt{145}}{12}<\alpha <2.
	\end{cases}\notag
\end{eqnarray}
Then by direct computation,  we have 
\begin{eqnarray}
	\begin{cases}
		h_1(0)=\frac{3}{4}-\frac{\alpha}{2} +\frac{2-\alpha }{2\alpha }<0,\ \text{when }u_0\in H^{0,1},  \frac{1+\sqrt{33}}{4}<\alpha <2,\\
		h_2(0)=\frac{9}{4}-\frac{3\alpha}{2} +\frac{2-\alpha }{2\alpha }<0, \ \text{when }u_0\in H^{0,2}, \frac{7+\sqrt{145}}{12}<\alpha <2.
	\end{cases}
\end{eqnarray}
By the continuity of $h_1,\ h_2$, we can find $0<\varepsilon _1<2\alpha \lambda _2$ such that for any $0<\varepsilon _0<\varepsilon _1$
\begin{equation}
	\frac{3}{4}-\frac{\alpha}{2} +\lambda _2\frac{2-\alpha }{2\alpha \lambda _2-\varepsilon_0 }<0,\ \text{when }u_0\in H^{0,1},  \frac{1+\sqrt{33}}{4}<\alpha <2,\label{3221}
\end{equation}
\begin{equation}
	\frac{9}{4}-\frac{3\alpha}{2} +\lambda _2\frac{2-\alpha }{2\alpha \lambda _2-\varepsilon_0 }<0,\ \text{when }u_0\in H^{0,2}, \frac{7+\sqrt{145}}{12}<\alpha <2.\label{4241}
\end{equation}
For  $0<\varepsilon _0<\varepsilon _1$, we define 
\begin{eqnarray}\label{3211}
	K_0=\left(\frac{2-\alpha }{2\alpha \lambda _2-\varepsilon_0 }\right)^{1/\alpha },\qquad T_0=\max \left\{\left(\frac{2C_3\alpha  }{\varepsilon _0K_0^{\alpha +1}  }\right)^{\frac{4\alpha }{3\alpha -4}},\ e\right\} ,
\end{eqnarray}
where $ C_3>0$ is the constant in  (\ref{e3}) that depends on  $u_0$. 

The following  refined $L^\infty $ estimate for $v_{\Lambda }$ is true:
\begin{lem}\label{l5.1}
	Assume $u_0\in H^{0,1},\  \frac{1+\sqrt{33}}{4}<\alpha <2$ or $u_0\in H^{0,2},\ \frac{7+\sqrt{145}}{12}<\alpha <2$.  There exists $T^*(\varepsilon _0)>T_0$, such that for all $t>T^*(\varepsilon _0)$
	\begin{eqnarray}
		\left\|v_{\Lambda }(t,x)\right\|_{L_x^\infty }\le 	K_0t^{1/2-1/\alpha }.\notag
	\end{eqnarray}
\end{lem}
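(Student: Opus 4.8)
The plan is to freeze the frequency variable $\xi$ and track the scalar quantity $|v_{\Lambda}(t,\xi)|^{-\alpha}$ along the refined flow (\ref{4.340}). Exactly as in the derivation of (\ref{e2}), I multiply (\ref{4.340}) by $|v_{\Lambda}|^{-(\alpha+2)}\overline{v_{\Lambda}}$ and take imaginary parts; since $w(x)$ is real the phase term drops out and the dissipation produces the gain, leaving
\[
-\frac1\alpha\frac{d}{dt}|v_{\Lambda}|^{-\alpha}=-\lambda_2 t^{-\alpha/2}-t^{-\alpha/2}\,\mathrm{Im}\big(R(v)\overline{v_{\Lambda}}\big)\,|v_{\Lambda}|^{-(\alpha+2)},
\]
now with the \emph{single} remainder $R(v)$ of (\ref{4.340}) obeying the sharp bounds (\ref{r2})--(\ref{r4}). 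I then pass to the rescaled unknown $g(t,\xi):=\big(t^{1/\alpha-1/2}|v_{\Lambda}(t,\xi)|\big)^{-\alpha}=t^{\alpha/2-1}|v_{\Lambda}|^{-\alpha}$, for which the assertion of Lemma \ref{l5.1} reads simply $g(t,\xi)\ge K_0^{-\alpha}$. A direct computation turns the identity above into
\[
\frac{d}{dt}g=t^{-1}\big[(\tfrac\alpha2-1)g+\alpha\lambda_2\big]+E(t,\xi),\qquad |E(t,\xi)|\le \alpha\,t^{-1}\,\|R(v)\|_{L^\infty}\,|v_{\Lambda}|^{-\alpha-1}.
\]

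The mechanism is that the linear drift $t^{-1}[(\tfrac\alpha2-1)g+\alpha\lambda_2]$ attracts $g$ to its stable equilibrium $g^\ast=\tfrac{2\alpha\lambda_2}{2-\alpha}$, and by the choice of $K_0$ in (\ref{3211}) this equilibrium lies strictly above the target, $K_0^{-\alpha}=\tfrac{2\alpha\lambda_2-\varepsilon_0}{2-\alpha}<g^\ast$, with the clean identity $(\tfrac\alpha2-1)K_0^{-\alpha}+\alpha\lambda_2=\tfrac{\varepsilon_0}{2}>0$. Thus on the barrier $\{g=K_0^{-\alpha}\}$ the drift equals $+\tfrac{\varepsilon_0}{2}t^{-1}$, and everything reduces to controlling $E$ against it. On $\{g\le K_0^{-\alpha}\}$ one has $|v_{\Lambda}|\ge K_0 s^{1/2-1/\alpha}$, so $|v_{\Lambda}|^{-\alpha-1}\le K_0^{-\alpha-1}s^{1/2+1/\alpha-\alpha/2}$; inserting this together with $\|R(v)\|_{L^\infty}$ from (\ref{r2}) (case $u_0\in H^{0,1}$) or (\ref{r3}) (case $u_0\in H^{0,2}$) makes the remainder integrand behave like $s^{-3/4+1/\alpha-\alpha/2}$, resp.\ $s^{3/4+1/\alpha-3\alpha/2}$, so that $\int^\infty s^{-\alpha/2}\|R(v)\|_{L^\infty}|v_{\Lambda}|^{-\alpha-1}\,ds$ converges \emph{precisely} when $\tfrac14-\tfrac\alpha2+\tfrac1\alpha<0$, resp.\ $\tfrac74-\tfrac{3\alpha}2+\tfrac1\alpha<0$. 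These are exactly $h_1(0)<0$ and $h_2(0)<0$, i.e.\ the lower bounds $\alpha>\tfrac{1+\sqrt{33}}{4}$ and $\alpha>\tfrac{7+\sqrt{145}}{12}$, and the strict margin $\varepsilon_0$ is what propagates this to the conditions (\ref{3221})--(\ref{4241}) used to fix $K_0$. Integrating the scalar ODE then gives $|v_{\Lambda}|^{-\alpha}=\tfrac{2\alpha\lambda_2}{2-\alpha}t^{1-\alpha/2}+(\text{convergent correction})$, hence $g\to g^\ast>K_0^{-\alpha}$; the explicit threshold $T_0$ of (\ref{3211}), built from the constant $C_3$ bounding this integral, is tuned so that for $t>T_0$ one has $|E|\le\tfrac{\varepsilon_0}{4}t^{-1}$ whenever $g\le K_0^{-\alpha}$, whence $g'>0$ throughout that region.

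With the barrier in hand I would conclude by a continuity/contradiction argument in the spirit of Lemma \ref{l4.8}. The rough bound (\ref{d1}) gives $g(T_0,\xi)\ge(2K)^{-\alpha}$ uniformly in $\xi$; as long as $g$ stays below $K_0^{-\alpha}$ the inequality $g'\ge\tfrac{\varepsilon_0}{4}t^{-1}$ forces at least logarithmic growth, so $g$ must reach $K_0^{-\alpha}$ at a finite time $T^\ast>T_0$ that is uniform in $\xi$, and thereafter the strict positivity $+\tfrac{\varepsilon_0}{2}t^{-1}$ of the drift at the barrier forbids any downward crossing; this yields $g(t,\xi)\ge K_0^{-\alpha}$ for all $t>T^\ast$ and all $\xi$, which is the claim. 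I expect the main difficulty to be the superlinear feedback of the remainder through the factor $|v_{\Lambda}|^{-\alpha-1}$: it can be absorbed by the drift only once $|v_{\Lambda}|$ has been bounded below, which is why the argument must be bootstrapped from the already-proven rough estimate (\ref{d1}) and why the sharper ranges of $\alpha$, guaranteeing convergence of the remainder integral, are unavoidable. A secondary but essential point is to verify that $T^\ast$ and every implied constant are genuinely independent of $\xi$, so that the pointwise control upgrades to the stated $L^\infty$ estimate; the resulting sharp limit $g\to g^\ast=\tfrac{2\alpha\lambda_2}{2-\alpha}$ is then confirmed a posteriori by part (d) of Theorem \ref{T1.3}.
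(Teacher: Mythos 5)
Your proposal is correct, and its core is the same as the paper's own proof: the paper also multiplies (\ref{4.340}) by $|v_{\Lambda}|^{-(\alpha+2)}\overline{v_{\Lambda}}$, takes imaginary parts, bounds the remainder via $\|R(v)\|_{L^\infty}$ combined with the lower bound $|v_{\Lambda}|\ge K_0 t^{1/2-1/\alpha}$ valid on the bad set, and exploits exactly your ``clean identity'' $\tfrac{\alpha-2}{2}K_0^{-\alpha}+\alpha\lambda_2=\tfrac{\varepsilon_0}{2}$ together with the choice (\ref{3211}) of $T_0$ to get strict positivity of the comparison function's derivative at any barrier crossing. The only real difference is the endgame: the paper argues by contradiction with a sequence $(t_n,\xi_n)$, $t_n\to\infty$, first showing (Claim (\ref{3212})) that the bad set would persist all the way back to $T_0$, then integrating (\ref{e3}) over $(T_0,t_n)$ and letting $n\to\infty$ to reach the contradiction $K_0^{-\alpha}\ge \tfrac{2\alpha\lambda_2}{2-\alpha}$; you instead integrate the differential inequality $g'\ge\tfrac{\varepsilon_0}{4}t^{-1}$ forward, using (\ref{d1}) for the uniform starting value $g\ge(2K)^{-\alpha}$, to obtain logarithmic growth, an explicit $\xi$-uniform crossing time $T^*$, and a no-downward-crossing statement. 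Your version is valid and slightly more constructive ($T^*$ is explicit rather than produced by contradiction). One imprecision worth flagging: convergence of $\int^{\infty} s^{-\alpha/2}\|R(v)\|_{L^\infty}|v_{\Lambda}|^{-\alpha-1}\,ds$ is not actually what your barrier mechanism requires --- it only needs the exponent in $t\,|E(t,\xi)|$ to be negative, i.e. $\alpha>4/3$ in the $H^{0,1}$ case and $\alpha>\tfrac{3+\sqrt{73}}{8}$ in the $H^{0,2}$ case; the sharper thresholds $\tfrac{1+\sqrt{33}}{4}$ and $\tfrac{7+\sqrt{145}}{12}$ are imposed so that (\ref{3221})--(\ref{4241}) hold (they define $\varepsilon_1$ and hence $K_0$), and their real work is done later in the proof of Theorem \ref{T1.3}, e.g. to ensure $\beta>0$ in (\ref{3222}).
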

\begin{proof}	
	In what follows, we   prove Lemma \ref{l5.1} in the case $u_0\in H^{0,1},\  \frac{1+\sqrt{33}}{4}<\alpha <2$ by the contradiction argument. The case $u_0\in H^{0,2}$, $\frac{7+\sqrt{145}}{12}<\alpha <2$ follows from a similar argument, but using (\ref{r3})--(\ref{r4}), (\ref{4241}) instead of (\ref{r2})--(\ref{r1}), (\ref{3221}).  
	
	Assume by contradiction that  there exist  $\{(t_n,\xi_n)\}_{n=1}^\infty \subset (T_0,\infty )\times \mathbb{R}$ with $t_n$ monotone increases to $\infty $ such that 
	\begin{equation}\label{4111}
		\left|v_{\Lambda }(t_n,\xi_n)\right|>K_0t_n^{1/2-1/\alpha },\qquad \text{for all }n\ge1.
	\end{equation}
	We first claim that, for every fixed $n\ge1$, we have  
	\begin{eqnarray}\label{3212}
		|v_{\Lambda }(t,\xi_n)|>
		K_0t^{1/2-1/\alpha }, \qquad \text{for all } t\in (T_0,t_n).
	\end{eqnarray}
	In fact, if this claim is not true,  there would exist some $t^*_n\in(T_0,t_n)$ such that 
	\begin{equation}\label{4112}
		\left|v_{\Lambda }(t,\xi_n)\right|>K_0t^{1/2-1/\alpha }, \qquad \text{holds for all } t\in (t^*_n,t_n),
	\end{equation} 
	and furthermore $\left|v_{\Lambda }(t^*_n,\xi_n)\right|=K_0(t^*_n)^{1/2-1/\alpha }$. Multiplying $\left|v_{\Lambda }(t,\xi_n)\right|^{-(\alpha +2)}\overline{v_{\Lambda }(t,\xi_n)}$ on both hand sides of (\ref{4.340}) and taking the imaginary part, we obtain, for all $t\in (t_n^*,t_n)$,  
	\begin{eqnarray}
		-\frac{1}{\alpha }\frac{d}{dt}\left|v_{\Lambda }(t,\xi_n)\right|^{-\alpha }&=&- \lambda_2 t^{-\alpha /2}-\text{Im} \left(t^{-\alpha /2}R(v)\overline{v_{\Lambda }}\right)\left|v_{\Lambda }\right|^{-(\alpha +2)}.\label{13}
	\end{eqnarray}
	Using (\ref{r2}) and (\ref{4112}) to bound the third term in (\ref{13}),  we deduce that there exists $C_3>0$ such that for all $t\in (t_n^*,t_n)$,
	\begin{equation}
		-\frac{1}{\alpha }\frac{d}{dt}\left|v_{\Lambda }(t,\xi_n)\right|^{-\alpha }\le -\lambda _2  t^{-\alpha /2}+C_3 K_0^{-(\alpha +1)}t^{-3/4-\alpha /2+1/\alpha  }.\label{e3}
	\end{equation}
	Integrating the above inequality from $t_n^*$ to $t$ and using $\left|v_{\Lambda }(t_n^*,\xi_0)\right|=K_0(t_n^*)^{1/2-1/\alpha }$, we obtain 
	\begin{eqnarray} 
			\left(t^{1/\alpha -1/2}\left|v_{\Lambda }(t,\xi_n)\right|\right)^{-\alpha }&\ge& \left(\frac{t_n^*}{t}\right)^{1-\alpha /2}K_0^{-\alpha} +\frac{2\alpha \lambda _2}{2-\alpha }\left(1-\left(\frac{t_n^*}{t}\right)^{1-\alpha /2}\right)\notag\\
		&&-C_3 \alpha K_0^{-(\alpha +1)}t^{-1 +\alpha /2 }\int_{t^*}^{t}s^{-3/4-\alpha /2+1/\alpha } \mathrm{d}s\notag\\
		&=:&g(t).\notag
	\end{eqnarray}
	We see that $g(t_n^*)=K_0^{-\alpha }$ and $g(t)$ is monotone increasing around $t=t_n^*$. Indeed, 
	\begin{eqnarray}
		g'(t_n^*)=\left(\frac{\alpha -2}{2}K_0^{-\alpha  }+\alpha \lambda _2-C_3 \alpha K_0^{-(\alpha +1)}(t_n^*)^{-3/4+1/\alpha  }\right)(t_n^*)^{-1}>0,\notag
	\end{eqnarray}
	where we used  (\ref{3211}). 
	Thus, if $t$ is slightly lager than $t_n^*$, then $\left|t^{1/\alpha -1/2}v_{\Lambda}(t,\xi_n)\right|^{-\alpha }>K_0^{-\alpha }$, which contradicts (\ref{4112}).   Thus we finish the proof of Claim (\ref{3212}). 
	
	In what follows, we use Claim (\ref{3212}) to derive a contradiction to (\ref{4111}). Notice that (\ref{e3}) holds for all $t\in (T_0,t_n)$ by a similar argument used before, but using Claim (\ref{3212}) instead of (\ref{4112}).  	Integrating (\ref{e3}) from $T_0$ to $t_n$,  using (\ref{r2}) and Claim (\ref{3212}),   we have, for every $n\ge1$,
	\begin{eqnarray}\label{3261}
			\left(t_n^{1/\alpha -1/2}\left|v_{\Lambda }(t_n,\xi_n)\right|\right)^{-\alpha }
		&\ge& \frac{|v_\Lambda (T_0,\xi_n)|^{-\alpha }}{t_n^{1-\alpha /2}} +\frac{2\alpha \lambda _2}{2-\alpha }\left(1-\left(\frac{T_0}{t_n}\right)^{1-\alpha /2}\right)\notag\\
		&&-C_3\alpha K_0^{-(\alpha +1)}t_n^{-1 +\alpha /2 }\int_{T_0}^{t_n}s^{-3/4-\alpha /2+1/\alpha } \mathrm{d}s.
	\end{eqnarray}
	Since $K_0^{-\alpha }\ge 	(t_n^{1/\alpha -1/2}\left|v_{\Lambda }(t_n,\xi_n)\right|)^{-\alpha }$ by (\ref{4111}), and $\frac{|v_{\Lambda}(T_0,\xi_n)|^{-\alpha }}{t_n^{1-\alpha /2}}\rightarrow 0$ as $n\rightarrow \infty $ by Claim (\ref{3212}), we can let $n\rightarrow \infty $ in (\ref{3261}) and  obtain
	\begin{eqnarray}
		K_0^{-\alpha }\ge \frac{2\alpha \lambda _2}{2-\alpha }.\notag
	\end{eqnarray}
	This contradicts the definition of $K_0$ in (\ref{3211}),  and thus completing the proof of Lemma \ref{l5.1}.
\end{proof}
\subsection{The proof of Theorem \ref{T1.3}}\label{sub3}
We are now in a position to prove Theorem \ref{T1.3}.   We give only the proof  for the case $u_0\in H^{0,1}$, $ \frac{1+\sqrt{33}}{4}<\alpha <2$.  The case $u_0\in H^{0,2}$, $\frac{7+\sqrt{145}}{12}<\alpha <2$ follows from a similar argument, but using (\ref{r3})--(\ref{r4}), (\ref{4241})  instead of (\ref{r2})--(\ref{r1}), (\ref{3221}). The proof is inspired by Section 5 of \cite{CPDE}. 

\begin{proof}
	 [\textbf{Proof of part (a)}]
	Assume $T^*(\varepsilon _0)$ is the constant in Lemma \ref{l5.1}, $\Phi(t,x)$, $ K(t,x)$, $\psi_+(x)$, $S(t,x)$ are the functions defined in (\ref{eq1})--(\ref{eq2}), respectively. 
	From the definition of  $\Phi(t,x)$ in (\ref{eq1}) and Lemma \ref{l5.1}, we have 
	\begin{eqnarray}\label{3224}
		\left\|\Phi(t,x)\right\|_{L_x^\infty }&\le& \int_{1}^{T^*(\varepsilon _0)}s^{-\alpha /2}\|v_{\Lambda }(s,x)\|_{L^\infty _x}^\alpha  \mathrm{d}s+ K_0^\alpha \int_{T^*(\varepsilon _0)}^{t}s^{-1} \mathrm{d}s\notag\\
		&\le& CT^*(\varepsilon _0)+ K_0^\alpha \log t,\label{3195}
	\end{eqnarray}
	for all $t\ge1$, where we used (\ref{d1}) to estimate the first integral. 
	
	Set 
	\begin{equation}\label{4107}
		z(t,x)=v_\Lambda(t,x) e^{-i(w(x)t+\lambda \Phi(t,x))}, \qquad  t>1.
	\end{equation}
	From the equation (\ref{4.340}) and (\ref{4107}), we have that for $t>1$,
	\begin{eqnarray}
		\partial_t  z(t,x)=\frac{iR(v)}{t^{\alpha /2}}e^{-i(w(x)t+\lambda\Phi(t,x))};\notag
	\end{eqnarray}
	so that for all $t_2>t_1>1$,
	\begin{eqnarray}
		z(t_2,x)-z(t_1,x)=i\int_{t_1}^{t_2}s^{-\alpha /2}R(v)e^{-i(w(x)s+\lambda\Phi(s,x))} \mathrm{d}s. \label{7}
	\end{eqnarray}
	Since $-1/4+\lambda _2K_0^\alpha <0$ by (\ref{3221}) and (\ref{3211}), it follows  from (\ref{r2})--(\ref{r1}), (\ref{3195}) and (\ref{7}) that
	\begin{eqnarray}\label{3303}
		\left\|z(t_2,x)-z(t_1,x)\right\|_{L_x^\infty \cap L_x^2}&\lesssim& \int_{t_1}^{t_2}s^{-\alpha /2}\left\|R(v)\right\| _{ L_x^\infty \cap L_x^2}e^{\lambda _2\left\|\Phi(s,x)\right\|_{L_x^\infty }}\mathrm{d}s \notag\\
		&\lesssim_{\varepsilon _0} &  \int_{t_1}^{t_2}s^{-5/4+\lambda _2K_0^\alpha } \mathrm{d}s\notag\\
		&\lesssim_{\varepsilon _0} &  t_1^{-1/4+\lambda _2K_0^\alpha },
	\end{eqnarray}
	holds for any $t_2>t_1>1$. 
	Thus  there exists $z_+(x)\in L_x^2\cap L_x^\infty $ such that 
	\begin{eqnarray}\label{3196}
		\left\|z(t,x)-z_+(x)\right\|_{L_x^2\cap L_x^\infty }\lesssim_{\varepsilon _0} t^{-1/4+\lambda _2K_0^\alpha }.\label{1.12}
	\end{eqnarray}
	This finishes the proof of part (a). 
\end{proof}
\begin{proof}
 [\textbf{Proof of part (b)}] The first step is to  derive the asymptotic formula  (\ref{3222}) for $\Phi(t,x)$.  Note that 
 \begin{eqnarray}
 	\partial_t  \Phi(t,x)=t^{-\alpha /2}\left|v_\Lambda(t,x) \right|^\alpha =t^{-\alpha /2}\left|z(t,x)\right|^\alpha e^{-\alpha\lambda _2 \Phi(t,x)}\notag
 \end{eqnarray}
 for all  $t>1$ by the definition of  $z(t,x)$ in (\ref{4107}); so that
 \begin{eqnarray}
 	\partial_t  e^{\alpha \lambda _2\Phi(t,x)}=\alpha \lambda _2t^{-\alpha /2}\left|z(t,x)\right|^{\alpha }.\notag
 \end{eqnarray}
 Integrating the above equation from $1$ to $t$, we get
 \begin{eqnarray}\label{4}
 	e^{\alpha \lambda _2\Phi(t,x)}=1+\alpha \lambda _2\int_{1}^{t}s^{-\alpha /2}\left|z(s,x)\right| ^{\alpha }\mathrm{d}s.
 \end{eqnarray}
 From (\ref{3301}), (\ref{a2}) and (\ref{4}), we have  
 \begin{eqnarray}\label{3197}
 	e^{\alpha \lambda _2\Phi(t,x)}-K(t,x)-\psi_+(x)=- \alpha \lambda_2 \int_{t}^{\infty }s^{-\alpha /2}\left(\left|z(s,x)\right|^\alpha -\left|z_+(x)\right|^\alpha \right) \mathrm{d}s.
 \end{eqnarray}
 Since $\left|\left|u\right|^\alpha -\left|v\right|^\alpha \right|\lesssim (\left|u\right|^{\alpha -1}+\left|v\right|^{\alpha -1})\left|u-v\right|$, and $z(s,x),\ z_+(x)\in L_x^\infty $, we deduce from (\ref{3196}) and (\ref{3197}) that, for all $t\ge 1$,  
 \begin{eqnarray}\label{3222}
 	\left\|e^{\alpha \lambda _2\Phi(t,x)}-K(t,x)-\psi_+(x)\right\|_{L_x^\infty \cap L_x^2}\lesssim_{\varepsilon _0} \int_{t}^{\infty }s^{-1/4-\alpha/2 +\lambda _2K_0^\alpha } \mathrm{d}s\lesssim t^{-\beta},
 \end{eqnarray}
 where $\beta=-(3/4-\alpha/2 +\lambda _2K_0^\alpha )>0$ by (\ref{3221}) and (\ref{3211}).
 
 We now prove the asymptotic formula  (\ref{3222}).  Since  $|e^{iw(x)t+i\lambda \Phi(t,x)}|=e^{-\lambda _2\Phi(t,x)}\le1$,  we have 
 \begin{eqnarray}\label{3304}
 	&&\|e^{i(w(x)t+\lambda S(t,x))}z_+(x)-v_{\Lambda }(t,x)\|_{L_x^\infty\cap L_x^2 }\notag\\
 	&\le& \|e^{iw(x)t}(e^{i\lambda S(t,x)}-e^{i\lambda \Phi(t,x)})z_+\|_{L_x^\infty\cap L_x^2 }+\|e^{iw(x)t}e^{i\lambda \Phi(t,x)}z_+-v_{\Lambda }(t,x)\|_{L_x^\infty\cap L_x^2 }\notag\\
 	&\le&\|e^{i\lambda _1S(t,x)}(e^{-\lambda _2S(t,x)}-e^{-\lambda _2\Phi(t,x)})z_+(x)\|_{L_x^\infty\cap L_x^2 }\notag\\
 	&&+\|(e^{i\lambda _1S(t,x)}-e^{i\lambda _1\Phi(t,x)})e^{-\lambda _2\Phi(t,x)}z_+(x)\|_{L_x^\infty \cap L_x^2}\notag\\
 	&&\qquad +\|z_+(x)-z(t,x)\|_{L_x^\infty\cap L_x^2 }.
 \end{eqnarray}
 Note that  $K(t,x)+\psi_+(x)\ge1/2$ for $t$ sufficiently large by (\ref{3222}); so that 
 \begin{eqnarray}\label{3225}
 	&&\|e^{i\lambda _1S(t,x)}(e^{-\lambda _2S(t,x)}-e^{-\lambda _2\Phi(t,x)})z_+(x)\|_{L_x^\infty\cap L_x^2 }\notag\\
 	&=& \|(K(t,x)+\psi_+(x))^{-1/\alpha }-e^{-\lambda _2\Phi(t,x)}\|_{L_x^\infty\cap L_x^2}\notag\\
 	&\lesssim &\|e^{\lambda _2\Phi(t,x)}-(K(t,x)+\psi_+(x))^{1/\alpha }\|  _{L_x^\infty \cap L_x^2}\notag\\
 	&\lesssim &  \|e^{\alpha \lambda _2\Phi(t,x)}-(K(t,x)+\psi_+(x))\|^{1/\alpha }  _{L_x^\infty\cap L_x^2 }\lesssim_{\varepsilon _0} t^{-\beta /\alpha }.
 \end{eqnarray}
 Similarly,  we have   
 \begin{eqnarray}\label{3305}
 	&&\|(e^{i\lambda _1S(t,x)}-e^{i\lambda _1\Phi(t,x)})e^{-\lambda _2\Phi(t,x)}z_+(x)\|_{L_x^\infty \cap L_x^2}\notag\\
 	&\lesssim &  \|S(t,x)-\Phi(t,x)\|_{L_x^\infty \cap L_x^2}\notag\\
 	&\lesssim &  \|e^{\lambda _2S(t,x)}-e^{\lambda _2\Phi(t,x)}\|_{L_x^\infty \cap L_x^2}\notag\\
 	&= & \|e^{\lambda _2\Phi(t,x)}-(K(t,x)+\psi_+(x))^{1/\alpha }\|  _{L_x^\infty \cap L_x^2}
 	\lesssim_{\varepsilon _0}   t^{-\beta /\alpha }.  
 \end{eqnarray}
 Substituting  (\ref{3303}), (\ref{3225})--(\ref{3305}) into (\ref{3304}),  we get 
 \begin{eqnarray}\label{3306}
 	\|e^{i(w(x)t+\lambda S(t,x))}z_+(x)-v_{\Lambda }(t,x)\|_{L_x^\infty\cap L_x^2 }\lesssim_{\varepsilon _0} t^{-\gamma},
 \end{eqnarray}
 for $t$ sufficiently large, where $0<\gamma:=\min \{1/4-\lambda _2K_0^\alpha, \ \beta/\alpha \}<1/4$.  It then  follows from Lemma \ref{l4.3-00}, (\ref{4.2}) and (\ref{3306}) that the asymptotic formula (\ref{3302}) holds. 
\end{proof}

\begin{proof}
	[\textbf{Proof of part (c)}] From the  asymptotic formula (\ref{3302}), we have 
	\begin{eqnarray}
		&&e^{-iF(D)t}e^{-i\lambda S(t,\frac{x}{t})}u(t,x)\notag\\
		&=&e^{-iF(D)t}\frac{1}{\sqrt t}z_+(\frac{x}{t})e^{iw(\frac{x}{t})t}+O_{L^2}(t^{-\gamma})\notag\\
		&=& \frac{1}{2\pi} \iint e^{i(x-y)\xi}e^{-iF(\xi)t}\frac{1}{\sqrt t}z_+(\frac{y}{t})e^{iw(\frac{y}{t})t}dyd\xi+O_{L^2}(t^{-\gamma})\notag\\
		&=& \frac{\sqrt t}{2\pi} \iint e^{ix\xi-it(y\xi+F(\xi))}z_+(y)e^{itw(y)}dyd\xi+O_{L^2}(t^{-\gamma})\notag\\
		&=& \frac{\sqrt t}{2\pi }\iint e^{ix\xi-itc_2(\xi+\frac{y+c_1}{2c_2})^2}z_+(y)dyd\xi+O_{L^2}(t^{-\gamma})\label{4252}
	\end{eqnarray}
	as $t\rightarrow \infty $, where we use $y\xi+F(\xi)=w(y)+c_2(\xi+\frac{y+c_1}{2c_2})^2$  in the last step. 
	Moreover, making a change of variables and then using the dominated convergence theorem, we obtain 
	\begin{eqnarray}
		&&\lim_{t\rightarrow \infty }\frac{\sqrt t}{2\pi }\iint e^{ix\xi-itc_2(\xi+\frac{y+c_1}{2c_2})^2}z_+(y)dyd\xi\notag\\
		&=& \lim_{t\rightarrow \infty } \frac{1}{2\pi} \iint e^{ix(\frac{\zeta}{\sqrt t}-\frac{y+c_1}{2c_2})}e^{-ic_2\zeta^2}z_+(y)dyd\zeta \notag\\
		&=&  \frac{1}{2\pi} \iint e^{-ix\frac{y+c_1}{2c_2}}e^{-ic_2\zeta^2}z_+(y)dyd\zeta \notag\\
		&=&\frac{1}{\sqrt {4\pi c_2}}e^{-i\frac{\pi}{4}}e^{-i\frac{c_1x}{2c_2}} (\mathcal{F} z_+)(\frac{x}{2c_2})\qquad \text{ in } L^2.\label{4253}
	\end{eqnarray}
	The modified scattering formula (\ref{123}) is now an immediate consequence of (\ref{4252}) and (\ref{4253}). 
\end{proof}
\begin{proof}
	[\textbf{Proof of part (d)}]  
	From (\ref{4.2}), Lemma \ref{l4.3-00} and  Lemma \ref{l5.1}, we have 
	\begin{eqnarray}
		t^{1/\alpha }\|u(t,x)\|_{L^\infty _x}&\le & t^{1/\alpha -1/2} \left(\|v_{\Lambda }\|_{L^\infty }+\|v_{\Lambda ^c}\|_{L^\infty }\right)\notag\\
		&\le& \left(\frac{2-\alpha }{2\alpha \lambda _2-\varepsilon _0}\right)^{1/\alpha }+Ct^{1/\alpha -3/4},\ \text{for all } t>T^*(\varepsilon _0).\notag
	\end{eqnarray}
	Since $\varepsilon _0$ can be chosen to be  arbitrarily small, we obtain 
	\begin{eqnarray}
		\limsup_{t\rightarrow \infty }t^{1/\alpha }\|u(t,x)\|_{L^\infty _x}\le\left(\frac{2-\alpha }{2\alpha \lambda _2}\right)^{1/\alpha }.\notag
	\end{eqnarray}
	Therefore, the proof of part (d) reduces to show  that 
	\begin{eqnarray}
		\liminf _{t\rightarrow \infty }t^{1/\alpha }\|u(t,x)\|_{L^\infty _x}\ge \left(\frac{2-\alpha }{2\alpha \lambda _2}\right)^{1/\alpha }. \label{z651}
	\end{eqnarray}
	Assume for a while that we have proved 
	\begin{claim}\label{c123}
		If the limit function $z_+(x)$ in (\ref{3196}) satisfies $z_+(x)=0$ for a.e. $x\in \mathbb{R}$, then we must have $u_0=0$. 	\end{claim}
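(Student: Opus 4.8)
The plan is to assume $z_+(x)=0$ for a.e.\ $x\in\mathbb{R}$ and deduce $u_0=0$ by confronting two consequences of this hypothesis: it forces $\|u(t)\|_{L^2_x}\to0$ as $t\to\infty$, while simultaneously improving the time-decay of $u$ enough to make the nonlinear dissipation integrable in time; fed into the mass identity (\ref{141}), the latter prevents $\|u(t)\|_{L^2_x}$ from tending to $0$ unless $u_0=0$.

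First I would extract from the definition (\ref{4107}) and $\lambda=\lambda_1+i\lambda_2$ the pointwise identity $|z(t,x)|=|v_{\Lambda}(t,x)|\,e^{\lambda_2\Phi(t,x)}$; since $\Phi\ge0$ and $\lambda_2>0$, this gives $|v_{\Lambda}|\le|z|$ pointwise. Hence $z_+=0$ together with the convergence (\ref{3196}) yields $\|v_{\Lambda}(t)\|_{L^2_x}\le\|z(t)\|_{L^2_x}\lesssim t^{-1/4+\lambda_2K_0^\alpha}$ and, in the same way, $\|v_{\Lambda}(t)\|_{L^\infty_x}\lesssim t^{-1/4+\lambda_2K_0^\alpha}$, the exponent being negative by (\ref{3221}). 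Combining this with the bounds for $v_{\Lambda^c}$ of Lemma \ref{l4.3-00} and the identities (\ref{3.4-0}), I obtain at once $\|u(t)\|_{L^2_x}\to0$ and the improved pointwise decay $\|u(t)\|_{L^\infty_x}\lesssim t^{-\beta}$, where $\beta:=3/4-\lambda_2K_0^\alpha$.

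Then I would check that this decay beats the critical rate, i.e.\ $\alpha\beta>1$. Indeed (\ref{3221}) reads $\lambda_2K_0^\alpha<\alpha/2-3/4$, so
\begin{equation}
\alpha\beta=\tfrac{3\alpha}{4}-\alpha\lambda_2K_0^\alpha>\tfrac{3\alpha}{4}-\alpha\Big(\tfrac{\alpha}{2}-\tfrac{3}{4}\Big)=\tfrac{\alpha(3-\alpha)}{2}>1\quad\text{for }1<\alpha<2.\notag
\end{equation}
Thus $\int_1^\infty\|u(\tau)\|_{L^\infty_x}^\alpha\,\mathrm{d}\tau<\infty$; and on $(0,1]$ the bound $\|u(\tau)\|_{L^\infty_x}\lesssim\tau^{-1/2}\|u_0\|_{H^{0,1}}$---which follows from the Gagliardo--Nirenberg argument underlying (\ref{4133}) (valid for every $\tau>0$), the estimate (\ref{3234}) and Lemma \ref{l4.1}---is $\alpha$-integrable near $\tau=0$ precisely because $\alpha<2$. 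Hence $\int_0^\infty\|u(\tau)\|_{L^\infty_x}^\alpha\,\mathrm{d}\tau<\infty$. Differentiating (\ref{141}) and using $\|u\|_{L^{\alpha+2}}^{\alpha+2}\le\|u\|_{L^\infty_x}^\alpha\|u\|_{L^2_x}^2$ gives $\frac{d}{dt}\|u(t)\|_{L^2_x}^2\ge-2\lambda_2\|u(t)\|_{L^\infty_x}^\alpha\|u(t)\|_{L^2_x}^2$, which integrates to
\begin{equation}
\|u(t)\|_{L^2_x}^2\ge\|u_0\|_{L^2}^2\,\exp\Big(-2\lambda_2\int_0^\infty\|u(\tau)\|_{L^\infty_x}^\alpha\,\mathrm{d}\tau\Big).\notag
\end{equation}
If $u_0\neq0$ the right-hand side is a strictly positive constant, contradicting $\|u(t)\|_{L^2_x}\to0$; therefore $u_0=0$.

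The main obstacle is the middle step: one must confirm that the hypothesis $z_+=0$ genuinely produces decay strictly faster than the generic rate $t^{-1/\alpha}$, equivalently $\alpha\beta>1$, so that the dissipation $\int\|u\|_{L^{\alpha+2}}^{\alpha+2}\,\mathrm{d}t$ is finite. This threshold is exactly what converts the otherwise innocuous monotone decay of the $L^2$ mass into a rigid positive lower bound; the short-time integrability near $t=0$, which needs $\alpha<2$, is a small but necessary verification.
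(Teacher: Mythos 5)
Your proof is correct, and it takes a genuinely different route from the paper's in both of its steps. For the improved decay, the paper does not use your pointwise inequality $|v_{\Lambda}|=|z|e^{-\lambda_2\Phi}\le|z|$; instead it integrates the ODE (\ref{4.340}) backwards from $t=\infty$, writing $v_{\Lambda}(t,x)=-i\int_t^\infty e^{iw(x)(t-s)+i\lambda(\Phi(t,x)-\Phi(s,x))}s^{-\alpha/2}R(v)\,\mathrm{d}s$; since only the difference $\Phi(t,\cdot)-\Phi(s,\cdot)$ enters, the $t^{\lambda_2K_0^\alpha}$ loss cancels and the paper obtains the cleaner rates $\|u\|_{L^\infty_x}\lesssim t^{-3/4}$, $\|u\|_{L^2_x}\lesssim t^{-1/2}$ of (\ref{655}); your weaker rate $t^{-\beta}$ with $\beta=3/4-\lambda_2K_0^\alpha$ suffices because, as you verify, $\alpha\beta>1$. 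The endgame is where the two arguments really diverge: the paper writes the backward Duhamel formula $u(t)=\lambda\int_t^\infty e^{iF(D)(t-s)}(|u|^\alpha u)(s)\,\mathrm{d}s$, absorbs via Strichartz and H\"older on $[T,\infty)$ using $CT^{1-3\alpha/4}\le\frac12$ (this is precisely where $\alpha>4/3$ enters), and then appeals to uniqueness of solutions to propagate $u\equiv0$ back to $t=0$ — implicitly a backward-in-time uniqueness statement. You instead play the improved decay against the dissipation identity (\ref{141}) and run Gronwall from below, so that $\|u(t)\|_{L^2}^2\ge\|u_0\|_{L^2}^2\exp\left(-2\lambda_2\int_0^\infty\|u(\tau)\|_{L^\infty_x}^\alpha\,\mathrm{d}\tau\right)$ contradicts $\|u(t)\|_{L^2_x}\to0$ unless $u_0=0$. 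This buys elementarity: no Strichartz estimates and no backward uniqueness are needed, at the price of the two small checks you correctly flag — the extension of (\ref{4133}) to $0<t\le1$ (valid, since (\ref{4135}) is an exact identity for every $t>0$) and $\alpha<2$ for integrability of $\tau^{-\alpha/2}$ near $\tau=0$. One point you should make explicit: like the paper, you argue in the case $u_0\in H^{0,1}$, $\frac{1+\sqrt{33}}{4}<\alpha<2$, using (\ref{3221}); in the case $u_0\in H^{0,2}$, $\frac{7+\sqrt{145}}{12}<\alpha<2$, the rate in (\ref{3196}) becomes $t^{5/4-\alpha+\lambda_2K_0^\alpha}$, so that $\beta$ is replaced by $\min\{\alpha-3/4-\lambda_2K_0^\alpha,\,3/4\}$, and (\ref{4241}) gives $\alpha\beta>\min\{\alpha(3-\alpha)/2,\,3\alpha/4\}>1$, the second entry being where $\alpha>4/3$ re-enters; with that modification your argument covers both cases, consistently with the paper's own convention.
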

	\noindent Then since  $u_0\neq 0$, there exists $x_0\in \mathbb{R}$ such that $z_+(x_0)\neq0$. So  by (\ref{3306}) and (\ref{652}), we have 
	\begin{eqnarray}
		t^{1/\alpha-1/2 }\|v_{\Lambda }(t,x)\|_{L^\infty _x}&\ge& \frac{t^{1/\alpha-1/2 }|z_+(x_0)|}{(1+\frac{2\alpha \lambda _2}{2-\alpha }|z_+(x_0)|^\alpha (t^{(2-\alpha )/2}-1)+\psi_+(x_0))^{1/\alpha }}\notag\\
		&&+O_{\varepsilon_0 }(t^{1/\alpha -1/2-\gamma(\varepsilon _0)}),\label{121}
	\end{eqnarray}
	where  $\gamma(\varepsilon _0)=\min \left\{1/4-\lambda _2 K_0^\alpha ,\ \beta/\alpha \right\} $ with  $\beta=-(3/4-\alpha /2+\lambda _2 K_0^\alpha )$ (see (\ref{3222}) and (\ref{3306})). Since 
	$\alpha >\frac{5+\sqrt {89}}{8}$ and  $\lambda _2 K_0^\alpha =\frac{(2-\alpha )\lambda _2}{2\alpha \lambda _2-\varepsilon _0}$, we have  by direct calculation
	\begin{equation}
		\lim _{\varepsilon _0 \rightarrow0}\left(\frac{1}{\alpha }-\frac{1}{2}-\gamma(\varepsilon _0)\right)=\frac{1}{\alpha }-\frac{1}{2}-\min \left\{\frac{1}{4}-\frac{2-\alpha }{2\alpha },\frac{2\alpha ^2-\alpha -4}{4\alpha ^2}\right\} <0.\notag
	\end{equation}
	Therefore, taking  $\varepsilon _0>0$ sufficiently small, we  deduce from (\ref{121}) that 
	\begin{equation}
		\liminf_{t\rightarrow \infty }t^{1/\alpha-1/2 }\|v_{\Lambda }(t,x)\|_{L^\infty _x}\ge  \left(\frac{2-\alpha }{2\alpha \lambda _2}\right)^{1/\alpha }. \notag
	\end{equation} 
	This together with (\ref{4.2}) and  Lemma \ref{l4.3-00} yields the limit (\ref{z651}). 
	\end{proof}
\begin{proof}[\textbf{Proof of Claim \ref{c123}}]
	The key observation is that the solution decays faster when $z_+=0$:
	\begin{eqnarray}
		\|u(t,x)\|_{L^\infty _x}\lesssim t^{-3/4},\ 	\|u(t,x)\|_{L^2 _x}\lesssim t^{-1/2}\qquad  \text{for }t>T^*(\varepsilon _0).\label{655}
	\end{eqnarray}
	In fact, since $z_+=0$, it follows from (\ref{4107})--(\ref{7}) that 
	\begin{eqnarray}
		v_{\Lambda }(t,x)=-i \int_{t}^{\infty }e^{iw(x)(t-s)+i\lambda \left(\Phi(t,x)-\Phi(s,x)\right)}s^{-\alpha /2} R(v)(s)\mathrm{d}s.\label{653}
	\end{eqnarray}
	On the other hand, using (\ref{eq1}) and Lemma \ref{l5.1}, we have, for $s>t>T^*(\varepsilon _0)$ 
	\begin{eqnarray}
		\|\Phi(t,x)-\Phi(s,x)\|_{L^\infty _x}\le \int_{t}^{s}\tau^{-\alpha /2}\|v_{\Lambda }(\tau,x)\|_{L^\infty _x}^{\alpha } \mathrm{d}\tau\le K_0^\alpha  \log \frac{s}{t}. \label{654}
	\end{eqnarray} 
	Substituting (\ref{654}) to (\ref{653}), and using  the  $L^\infty $ bound of  $R(v)$ in  (\ref{r2}),  we get 
	\begin{eqnarray}
		\|v_{\Lambda }(t,x)\|_{L^\infty _x}\lesssim \int_{t}^{\infty } \left(\frac{s}{t}\right)^{\lambda _2K_0^\alpha }s^{-\alpha /2}s^{-5/4+\alpha /2}\mathrm{d}s\lesssim t^{-1/4},\ t>T^*(\varepsilon _0). \notag
	\end{eqnarray}
	Similarly, we have 
	\begin{eqnarray}
		\|v_{\Lambda }(t,x)\|_{L^2 _x}\lesssim t^{-1/2},\ t>T^*(\varepsilon _0). \notag
	\end{eqnarray}
	The above two inequalities together with (\ref{4.2}) and Lemma \ref{l4.3-00} yield (\ref{655}).

	Next, we apply the decay estimates (\ref{655}) to prove that $u_0=0$. Since $z_+=0$, it follows from the equation (\ref{1.1}) and the asymptotic formula  (\ref{3302})  that  
	\begin{eqnarray}
		u(t,x)=\lambda \int_{t}^{\infty }e^{iF(D)(t-s)} (|u|^\alpha u)(s)\mathrm{d}s.\notag
	\end{eqnarray}
	Then applying  Strichartz's estimate and H\"older's inequality, we get  
	\begin{eqnarray}
		\|u\|_{L^4([T,\infty ),L_x^\infty) }&\lesssim& \int_{T}^{\infty }\|u(s,x)\|_{L^\infty _x}^\alpha \|u(s,x)\|_{L^2_x} \mathrm{d}s\notag\\
		&\lesssim &	\|u\|_{L^4([T,\infty ),L_x^\infty) }\left(\int_{T}^{\infty }\left(\|u(s,x)\|_{L^\infty _x}^{\alpha -1}\|u(s,x)\|_{L^2}\right)^{4/3} \mathrm{d}s\right)^{3/4}\notag\\
		&\le&  C\|u\|_{L^4([T,\infty ),L_x^\infty) } T^{1-3\alpha /4},\notag
	\end{eqnarray}
	where we use (\ref{655}) in the last step. Since $\alpha >4/3$, we can choose $T>T^*(\varepsilon _0)$ sufficiently large such that $CT^{1-3\alpha /4}\le \frac{1}{2}$; so that $\|u\|_{L^4([T,\infty ),L_x^\infty) }=0$. This together with the uniqueness of the solutions implies $u\equiv0$, from which Claim \ref{c123} follows. 
\end{proof}
\section*{Acknowledgements}
This work is
partially supported by  NSF of
China under Grants   11771389,  11931010 and 11621101.

\end{document}